\DeclareSymbolFont{cyrillic}{T2A}{cmr}{m}{n}
\DeclareMathSymbol{\D}{\mathalpha}{cyrillic}{196}
\theoremstyle{plain}
\newtheorem{theorem}{Theorem}[section]
\newtheorem{lemma}{Lemma}[section]
\newtheorem{corollaryP}[theorem]{Corollary}
\newtheorem{propositionP}[theorem]{Proposition}
\newtheorem{proposition}[lemma]{Proposition}
\theoremstyle{definition}
\newtheorem{definition}[lemma]{Definition}
\newtheorem*{condition}{Condition}
\theoremstyle{remark}
\newtheorem{remark}[lemma]{Remark}
\def\namedlabel#1#2{\begingroup
   #2%
 \def\@currentlabel{#2}%
   \phantomsection\label{#1}\endgroup
}
\def\S{Section}
\def\ie{{\em i.e.,} }
\newfont\bbf{msbm10 at 12pt}
\def\eps{\varepsilon}
\def\R{{\mathbb R}}
\def\N{{\mathbb N}}
\def\B{{\mathcal B}}
\def\I{{\mathcal I}}
\def\P{{\mathcal P}}
\def\sm{\setminus}
\def\cv{\ensuremath{\text {Cor}}}
\newcommand{\dif}{\mathrm{d}}
\DeclareMathOperator*{\essinf}{ess\;inf}
\def\R{\ensuremath{\mathbb R}}
\def\N{\ensuremath{\mathbb N}}
\def\I{\ensuremath{{\bf 1}}}
\def\e{\ensuremath{\text e}}
\def\S{\ensuremath{\mathcal S}}
\def\RR{\ensuremath{\mathcal R}}
\def\B{\ensuremath{\mathcal B}}
\def\l{\ensuremath{\text{Leb}}}
\def\M{\ensuremath{\mathcal M}}
\def\P{\ensuremath{\mathcal P}}
\def\p{\ensuremath{\mathbb P}}
\def\RR{\ensuremath{\mathcal R}}
\def\II{\ensuremath{\mathscr I}}
\def\aa{\ensuremath{\mathscr A}}
\def\X{\mathcal{X}}
\def\ie{{\em i.e.}, }
\def\E{\mathbb E}
\def\cyl{\text{Z}}
\numberwithin{equation}{section}
\def\dist{\ensuremath{\mbox{dist}}}
\def\dist{\mbox{dist}}
\def\le{\leqslant}
\def\ge{\geqslant}
\begin{document}

\title{Convergence of Marked Point Processes of Excesses for Dynamical Systems}

\author[A. C. M. Freitas]{Ana Cristina Moreira Freitas}
\address{Ana Cristina Moreira Freitas\\ Centro de Matem\'{a}tica \&
Faculdade de Economia da Universidade do Porto\\ Rua Dr. Roberto Frias \\
4200-464 Porto\\ Portugal} \email{amoreira@fep.up.pt}

\author[J. M. Freitas]{Jorge Milhazes Freitas}
\address{Jorge Milhazes Freitas\\ Centro de Matem\'{a}tica \& Faculdade de Ci\^encias da Universidade do Porto\\ Rua do
Campo Alegre 687\\ 4169-007 Porto\\ Portugal}
\email{jmfreita@fc.up.pt}
\urladdr{http://www.fc.up.pt/pessoas/jmfreita}

\author[M. Magalh\~aes]{M\'ario Magalh\~aes}
\address{M\'ario Magalh\~aes\\ Centro de Matem\'{a}tica da Universidade do Porto\\ Rua do
Campo Alegre 687\\ 4169-007 Porto\\ Portugal} \email{mdmagalhaes@fc.up.pt}

\thanks{MM was partially supported  by FCT grant SFRH/BPD/89474/2012, which is supported by the program POPH/FSE. ACMF, JMF are partially  supported by FCT (Portugal) projects PTDC/MAT/120346/2010, FAPESP/19805/2014, PTDC/ MAT-CAL/3884/2014, which are funded by national and European structural funds through the programs  FEDER and COMPETE. All authors were partially supported by CMUP (UID/MAT/00144/2013), which is funded by FCT (Portugal) with national (MEC) and European structural funds through the programs FEDER, under the partnership agreement PT2020. We thank Mike Todd for helpful comments and suggestions.    
}

\date{\today}

\keywords{Extreme Value Theory, Return Time Statistics, Stationary Stochastic Processes, Random Measures, Extremal Index
} \subjclass[2010]{37A50, 60G70, 37B20, 60G55, 60G57, 60G10, 37C25.}


\begin{abstract}
We consider stochastic processes arising from dynamical systems simply by evaluating an observable function along the orbits of the system and study marked point processes associated to extremal observations of such time series corresponding to exceedances of high thresholds. Each exceedance is marked by a quantity intended to measure the severity of the exceedance. In particular, we consider marked point processes measuring the aggregate damage by adding all the excesses over the threshold that mark each exceedance (AOT) or simply by adding the largest excesses in a cluster of exceedances (POT). We provide conditions to prove the convergence of such marked point processes to a compound Poisson process, for whose multiplicity distribution we give an explicit formula. These conditions are shown to follow from a strong form of decay of correlations of the system. Moreover, we prove that the convergence of the marked point processes for a `nice' first return induced map can be carried to the original system. The systems considered include non-uniformly expanding maps (in one or higher dimensions), maps with intermittent fixed points or non-recurrent critical points. For a general class of examples, the compound Poisson limit process is computed explicitly and, in particular, in the POT case we obtain a generalised Pareto multiplicity distribution. \end{abstract}

\maketitle

\section{Introduction}
\label{sec:introduction}

In the past few years the study of the extremal behaviour of dynamical systems has drawn much attention (see for example: \cite{C01,FFT10,FFT12, HNT12, LFTV12, CC13, LFFF16}). The occurrence of extreme or rare events is often seen as the entrance of an orbit in some small (hence rare) target set in the phase space. These target sets are usually taken either as cylinders or shrinking balls around some determined point $\zeta$ in the phase space and we want to study the elapsed time before hitting such targets. This is obviously related to the recurrence properties of the system and can also be associated to the occurrence of extreme observations (or exceedances of high thresholds) for a given potential so that entrances in the target set mean that the respective observations of the potential achieve very high values. This relationship between hitting times and extreme values was formally established in \cite{FFT10, FFT11}.  In this paper, we will use an extreme value approach rather than a hitting times approach, but bear in mind that these are two sides of the same coin as can be fully appreciated in the aforementioned papers.

One of the motivations for studying such properties is that extreme events are associated with risk assessment and understanding their likelihood is of crucial importance. One way of keeping track of extreme events is through the study of point processes, which keep record of the number of exceedances (entrances in the target sets) observed in a certain normalised time frame. In \cite{FFT10,FFT13, AFV15} these processes were studied in a dynamical setting and called Rare Events Point Processes (REPP). The REPP could be described in a simplified way as follows (see precise definition in Section~\ref{sec:setting}). Let $X_0, X_1,\ldots $ be a stationary stochastic process arising from a dynamical system by observing a given potential along its orbits. Let $u$ be a high threshold, consider the time interval $[0,t)$ and a normalising scale factor $v_u$ that depends on $u$ and which will be made precise below. Let 
$$
N_u(t)=\sum_{j=0}^{\lfloor v_u t\rfloor} \I_{X_j>u},
$$
where $\I_A$ is the indicator function over the set $A$. Note that $N_u(t)$ gives the number of exceedances during the normalised time interval $[0,v_u t)$.

The convergence of REPP is affected significantly by the presence or absence of clustering of exceedances. As shown in \cite{FFT10}, in the absence of clustering, the REPP converge to a Poisson process of intensity $1$, meaning that, in particular, $N_u(t)$ converges in distribution to a Poisson random variable of average $t$. In \cite{FFT13}, under the presence of clustering, the REPP was shown to converge to a compound Poisson process of intensity $0<\theta<1$ and geometric multiplicity distribution with mean $\theta^{-1}$, that can be interpreted as the average cluster size. This parameter $\theta$ is called the Extremal Index (EI). In particular, this means that $N_u(t)$ converges in distribution to a P\'olya--Aeppli distribution. One can think of the compound Poisson process as having two independent components: the Poisson events on the time axis ruled by exponential inter arrival times and the multiplicity of each event (or weight associated to each event) that in the latter case is determined by a geometric distribution.  

The convergence of REPP can be used to obtain relevant information such as the expected time between the occurrence of catastrophic events, the intensity of clustering,  the distribution of the higher order statistics of a finite size sample, which ultimately are crucial for assessing risk. However, in many circumstances such as in actuarial science, or in structural safety, not only the frequency of rare undesirable events is relevant for the evaluation of risk associated to certain phenomenon. In fact, insurance companies and safety regulation agencies are also very much interested in, on the one hand, the severity of high impacts, and on the other hand, in the effects of aggregate damage. This motivates studying other point processes that are not limited to count the number of exceedances but rather quantify somehow the amount of damage by adding the excesses over a certain high threshold:
$$
A_u(t)=\sum_{j=0}^{\lfloor v_u t\rfloor} (X_j-u)_+,
$$
where $(x)_+=\max\{0,x\}$.

In the case there is no clustering $A_u(t)$ gives rise to Excesses Over Threshold (EOT) marked point process. When there is clustering then one has (at least) two natural possibilities to handle the excesses within a cluster: either we are interested in the aggregate damage and in that case we sum all the excesses within a cluster to obtain a Area Over Threshold (AOT) marked point process; or we are interested in the record impact of the highest exceedance and in that case we take the maximum excess within a cluster to obtain a Peak Over Threshold (POT) marked point point process (in this case we need to adjust the definition of $A_u(t)$ but postpone it to Section~\ref{sec:setting}). 

The interest in AOT arises for example in situations where the immediate large observations have an accumulated detrimental effect over a certain structure or a companies' financial situation that ultimately results in a system failure/collapse or bankruptcy. On the other hand the interest in the POT may appear when there  is some sort of recovery mechanism that softens the effect of small exceedances but one is mostly worried with the sensitivity to singular very high impacts.

Several difficulties arise in studying the convergence of such point processes. The most obvious is the fact that instead of expecting a discrete multiplicity distribution (Geometric distribution), as in \cite{FFT13}, here we expect continuous multiplicity distributions of Pareto type. This means that we have to build up on the work done in \cite{FFT13}, adapting the mixing conditions considered there in order to study joint Laplace transforms associated with these processes and ultimately prove their convergence for systems with good mixing rates.

In the classical setting of stationary stochastic processes the convergence of the REPP was proved to be a compound Poisson process in \cite{HHL88,LR88} under assumption $\Delta(u_n)$ (which is very similar to Leadbetter's $D(u_n)$ introduced in \cite{L73}) and assuming the existence of an EI. In the dynamical setting, in \cite{HV09}, the authors prove the convergence of $N_u(t)$ to a P\'olya--Aeppli distribution for cylinder target sets. In \cite{FFT13}, which builds upon \cite{FFT12}, some conditions were devised to prove the existence of an EI when the target sets are balls around repelling periodic points, the authors proved the convergence of the REPP to a compound Poisson process with geometric multiplicity distribution. The conditions proposed in \cite{FFT13} can be checked for systems with sufficient decay of correlations (contrary to $D(u_n)$ or $\Delta(u_n)$) and allow to prove the existence of an EI and compute its value from the expansion rate at the repelling periodic point.

In the classical literature in \cite{L91}, Leadbetter shows the convergence of the EOT, for independent and identically distributed (iid) random variables, and of the POT marked point process to a compound Poisson process with multiplicity distribution given by a generalised Pareto distribution, whose type is determined by the tail of the distribution of $X_0$. The convergence of the latter is obtained for stationary stochastic processes under condition $\Delta(u_n)$ that cannot be verified in a dynamical setting. The result is obtained under the assumption of existence of an EI. In \cite{L99}, the convergence of the AOT under $\Delta(u_n)$ is also addressed but assuming the existence of an unknown limit for the multiplicity distribution.

In the dynamical setting the appearance of clustering was linked to periodicity of the point $\zeta$ playing the role of base of the target sets in \cite{H93, HV09, FFT12,FFT13}. In fact, as proved in \cite{AFV15}, when the target sets are balls around $\zeta$ then we have a dichotomy regarding the convergence of the REPP for systems with a strong form of decay of correlations known as decay of correlations against $L^1$ (see Definition~\ref{def:dc} below): either $\zeta$ is periodic and in that case it converges to a compound Poisson process of intensity $\theta$ and geometric multiplicity distribution or $\zeta$ is not periodic and in that case we have no clustering and convergence to a standard Poisson process. In a very recent paper \cite{AFFR16}, the authors use multiple maxima $\zeta_1, \ldots, \zeta_k$ correlated by belonging to the same orbit to create a fake periodic effect that ultimately creates clustering, in this case, with possibly different multiplicity distributions.

In this paper we give conditions (a long range and short range conditions on the dependence structure of the stochastic processes)  to guarantee the convergence of the EOT, AOT, POT marked point processes, which can also be used to prove the convergence of the REPP. In fact, the result (Theorem~\ref{thm:convergence}) is quite general and can be used to prove the convergence of other marked point processes associated to exceedances by using other possible marks over each exceedance. The result applies both in the presence and absence of clustering. The conditions are devised  to be applied in the dynamical setting (contrary to $\Delta(u_n)$) and to simplify the proof of the existence of an EI. Moreover, from these new conditions we provide a new formula to compute the multiplicity distribution of the limiting compound Poisson process. Furthermore, the conditions can be used in a wide range of scenarios including target sets around multiple maxima as in \cite{AFFR16} or discontinuity points as in \cite{AFV15} or even other more geometrically intricate sets.

Then in Theorem~\ref{thm:check-D-D'} we show that such conditions can be easily verified if the system has for example decay of correlations against $L^1$ observables, which allows to apply the theory to uniformly expanding maps of the interval (such as Rychlik maps) or higher dimensional uniformly expanding systems like the ones studied by Saussol in \cite{S00}.

Motivated by an idea introduced in \cite{BSTV03} and extended in the recent paper \cite{HWZ14}, we prove Theorem~\ref{thm:mpp_ret_orig}, which states that if a system admits a `nice' first return time induced map for which we can prove the convergence of marked point processes associated to the exceedances (such as the EOT, AOT or POT) then the original system shares the same property. This allows the application of our results to maps with intermittent fixed points, like the Manneville-Pommeau maps or Liverani-Saussol-Vaienti maps, or maps with critical points such as Misiurewicz quadratic maps. 

In order to exemplify the application of the main results, prove the convergence of marked point processes and actually compute the limit distributions (using the formula we provide to compute its multiplicity distribution), we considered the case where the targets are balls around a single maximum at $\zeta$ with some natural regularity conditions to obtain a result (Theorem~\ref{thm:EOT-POT}) stating that for a fairly large scope of examples the EOT and POT marked point processes converge to a compound Poisson process with intensity $\theta$ (for which we provide a precise formula) and multiplicity distribution corresponding to a generalised Pareto. Then in Theorem~\ref{thm:AOT} we address the more difficult AOT case for which, under some more restrictive assumptions on the system, we also compute the multiplicity of the limiting compound Poisson distribution.

\section{The setting and statement of results}
\label{sec:setting}

Take a system $(\X,\mathcal B,\p,f)$, where $\X$ is a Riemannian manifold, $\mathcal B$ is the Borel $\sigma$-algebra, $f:\X\to\X$ is a measurable map and $\p$ an $f$-invariant probability measure. Suppose that the time series $X_0, X_1,\ldots$ arises from such a system simply by evaluating a given observable $\varphi:\X\to\R\cup\{\pm\infty\}$ along the orbits of the system, or in other words, the time evolution given by successive iterations by $f$:
\begin{equation}
\label{eq:def-stat-stoch-proc-DS}
X_n=\varphi\circ f^n,\quad \mbox{for each } n\in\N.
\end{equation}
Clearly, $X_0, X_1,\ldots$ defined in this way is not necessarily an independent sequence. However, $f$-invariance of $\p$ guarantees that this stochastic process is stationary.

The most simple point processes keep record of the exceedances of the high thresholds $u_n$ by counting the number of such exceedances on a rescaled time interval. The sequence of thresholds $(u_n)_{n\in\N}$ is chosen such that
\begin{equation}
\label{eq:un}
n\p(X_0>u_n)\to \tau,\;\mbox{ for some $\tau>0$, as $n\to\infty$,}
\end{equation}
so that the number of exceedances among the first $n$ observations is kept, approximately, at the constant rate $\tau>0$.  These counting processes were called Rare Events Point Processes (REPP) and were studied in \cite{FFT10,FFT13,FHN14, CFFH15}. Here, we consider even more sophisticated cases like when each exceedance is marked by the respective excess over the threshold $u_n$. In fact, the marked point processes will be defined by keeping record of the occurrence of clusters of exceedances and each such occurrence will be marked by the number of exceedances in the cluster (which allows us to recover the REPP), the sum of the excesses of all exceedances in a cluster, the maximum excess in the cluster or any other measure weighing the intensity of each cluster.

In order to provide a proper framework of the problem we introduce next the necessary formalism to state the results regarding the convergence of point processes and random measures. We recommend the book of Kallenberg \cite{K86} for further details on these topics.

\subsection{Random measures and weak convergence}

First we introduce the notions of \emph{random measures} and, in particular, \emph{point processes} and \emph{marked point processes} on the positive real line. Consider the interval $[0,\infty)$ and its Borel $\sigma$-algebra $\B_{[0,\infty)}$. A positive measure $\nu$ on $\B_{[0,\infty)}$ is said to be a Radon measure if $\nu(A)<\infty$ for every bounded set $A\in\B_{[0,\infty)}$. Let $\M:=\M([0,\infty))$ denote the space of all Radon measures defined on $([0,\infty),\B_{[0,\infty)})$. We equip this space with the vague topology, i.e., $\nu_n\to \nu$ in $\M([0,\infty))$ whenever $\nu_n(\psi)\to \nu(\psi)$ for any continuous function $\psi:[0,\infty)\to \R$ with compact support. Consider the subsets of $\M$ defined by $\M_p:=\{\nu\in\M: \nu(A)\in\N \mbox{ for all $A\in\B_{[0,\infty)}$}\}$ and $\M_a:=\{\nu\in\M: \nu \mbox{ is an atomic measure}\}$. A \emph{random measure} $M$ on $[0,\infty)$ is a random element of $\M$, \ie let $(\X,\B_\X, \p)$ be a probability space, then any measurable $M:\X\to \M$ is a random measure on $[0,\infty)$. A \emph{point process} $N$ and \emph{marked point process} $A$ are defined similarly as random elements on $\M_p$ and $\M_a$, respectively.

The elements $\nu$ of $\M_p$ can be interpreted as counting measure, \ie
$
\nu=\sum_{i=1}^\infty \delta_{x_i},
$
where  $x_1, x_2, \ldots$ is a collection of not necessarily distinct points in $[0,\infty)$ and $\delta_{x_i}$ is the Dirac measure at $x_i$, \ie for every $A\in\B_{[0,\infty)}$, we have that $\delta_{x_i}(A)=1$ if $x_i\in A$ and
$\delta_{x_i}(A)=0$, otherwise. The elements $\nu$ of $\M_a$ can be written as 
$
\nu=\sum_{i=1}^\infty d_i \delta_{x_i},
$
where  $x_1, x_2, \ldots\in [0,\infty)$ and $d_1, d_2,\ldots\in[0,\infty)$.

To give a concrete example of a marked point process, which in particular will appear as the limit of the marked point processes, we consider:
\begin{definition}
\label{def:compound-poisson-process}
Let $T_1, T_2,\ldots$ be  an i.i.d. sequence of r.v. with common exponential distribution of mean $1/\theta$. Let  $D_1, D_2, \ldots$ be another i.i.d. sequence of r.v., independent of the previous one, and with d.f. $\pi$. Given these sequences, for $J\in\B_{[0,\infty)}$, set
$$
A(J)=\int \I_J\;d\left(\sum_{i=1}^\infty D_i \delta_{T_1+\ldots+T_i}\right).
$$
Let $\X$ denote the space of all possible realisations of $T_1, T_2,\ldots$ and   $D_1, D_2, \ldots$, equipped with a product $\sigma$-algebra and measure, then $A:\X\to \M_a([0,\infty))$ is a marked point process which we call a compound Poisson process of intensity $\theta$ and multiplicity d.f. $\pi$.
\end{definition}
\begin{remark}
\label{rem:poisson-process}
When $D_1, D_2, \ldots$ are integer valued positive random variables, $\pi$ is completely defined by the values $\pi_k=\p(D_1=k)$, for every $k\in\N_0$ and $A$ is actually a point process. Note that, if $\pi_1=1$ and $\theta=1$, then $A$ is the standard Poisson process and, for every $t>0$, the random variable $A([0,t))$ has a Poisson distribution of mean $t$.
\end{remark}

Now, we define what we mean by convergence of random measures (see \cite{K86} for more details).
\begin{definition}
\label{def:convergence-point-processes}
Let $(M_n)_{n\in\N}:\X\to  \M$ be a sequence of random measures defined on a probability space $(\X,\mathcal B_\X, \mu)$ and let $M:Y \to  \M$ be another random measure defined on a possibly distinct probability space $(Y,\mathcal B_Y, \nu)$. We say that $M_n$ converges in distribution to $M$  if $\mu\circ M_n^{-1}$ converges weakly to $\nu\circ M^{-1}$, \ie for every bounded continuous function $\varphi$ defined on $\M$, we have $\lim_{n\to\infty}\int \varphi d \mu\circ M_n^{-1}=\int \varphi d \nu\circ M^{-1}$.  We write $M_n \stackrel{\mu}{\Longrightarrow} M $.
\end{definition}

In order to check convergence of random measures it is useful to translate it into convergence in distribution of more tractable random variables or in terms of Laplace transforms. With that purpose, we let $\S$ denote the semi-ring of subsets of  $\R_0^+$ whose elements
are intervals of the type $[a,b)$, for $a,b\in\R_0^+$. Let $\RR$
denote the ring generated by $\S$.  Recall that for every $J\in\RR$
there are $\varsigma\in\N$ and $\varsigma$ disjoint intervals $I_1,\ldots,I_\varsigma\in\S$ such that
$J=\dot\cup_{i=1}^\varsigma I_j$. In order to fix notation, let
$a_j,b_j\in\R_0^+$ be such that $I_j=[a_j,b_j)\in\S$.

\begin{definition}
\label{def:Laplace-rv}
Let $Z$ be a non-negative, random variable with distribution $F$. For every $y\in\R_0^+$, the \emph{Laplace transform} $\phi(y)$ of the distribution $F$ is given by
\[
\phi(y):=\E\left(\e^{-yZ}\right)=\int \e^{-yZ} d\mu_F,
\]
where $\mu_F$ is the Lebesgue-Stieltjes probability measure associated to the distribution function $F$.   
\end{definition}

\begin{definition}
\label{def:Laplace-point-process}
For a random measure $M$ on $\R_0^+$ and $\varsigma$ disjoint intervals $I_1, I_2,\ldots, I_\varsigma\in\S$ and non-negative $y_1, y_2,\ldots,y_\varsigma$, we define the \emph{joint Laplace transform} $\psi(y_1, y_2,\ldots,y_\varsigma)$ by
\[
\psi_M(y_1, y_2,\ldots,y_\varsigma)=\E\left(\e^{-\sum_{\ell=1}^\varsigma y_\ell M(I_\ell)}\right).
\] 
\end{definition}

If $M$ is a compound Poisson point process with intensity $\lambda$ and multiplicity distribution $\pi$, then given $\varsigma$ disjoint intervals $I_1, I_2,\ldots, I_\varsigma\in\S$ and non-negative $y_1, y_2,\ldots,y_\varsigma$ we have:
\[
\psi_M(y_1, y_2,\ldots,y_\varsigma)=\e^{-\lambda\sum_{\ell=1}^\varsigma (1-\phi(y_\ell))|I_\ell|},
\]
where $\phi(y)$ is the Laplace transform of the multiplicity distribution $\pi$. 

\begin{remark}
\label{rem:convergence-point-processes}
By \cite[Theorem~4.2]{K86}, the sequence of random measures $(M_n)_{n\in\N}$ converges in distribution to the random measure $M$ iff the sequence of vector r.v.  $(M_n(J_1), \ldots, M_n(J_\varsigma))$ converges in distribution to $(M(J_1), \ldots, M(J_\varsigma))$, for every $\varsigma\in\N$ and all disjoint $J_1,\ldots, J_\varsigma\in\S$ such that $M(\partial J_\ell)=0$ a.s., for $\ell=1,\ldots,\varsigma$, which will be the case if the respective joint Laplace transforms $\psi_{M_n}(y_1, y_2,\ldots,y_\varsigma)$ converge to the joint Laplace transform $\psi_M(y_1, y_2,\ldots,y_\varsigma)$, for all $y_1,\ldots, y_\varsigma\in[0,\infty)$.
\end{remark}

\subsection{Marked Point Processes of Rare Events}

We start by defining some concepts and events that will be used in the definition of the marked point processes and of the dependence conditions needed to assure their convergence.

Let $A\in\B$. We define a function that we refer to as \emph{first hitting time function} to $A$, denoted by $r_A:\X\to\N\cup\{+\infty\}$ where
\begin{equation}
\label{eq:hitting-time}
r_A(x)=\min\left\{j\in\N\cup\{+\infty\}:\; f^j(x)\in A\right\}.
\end{equation}
The restriction of $r_A$ to $A$ is called the \emph{first return time function} to $A$. We define the \emph{first return time} to $A$, which we denote by $R(A)$, as the essential infimum of the return time function to $A$,
\begin{equation}
\label{eq:first-return}
R(A)=\essinf_{x\in A} r_A(x).
\end{equation}

We define, for each $j>1$, the $j$-th \emph{waiting} (or \emph{inter-hitting}) \emph{time} as
\begin{equation}
\label{eq:def-multiple-returns}
w_A^j(x):=r_A\left(f^{\sum_{i=1}^{j-1}w_A^i(x)}(x)\right),
\end{equation}
where $w_A^1(x):=r_A(x)$ and the \emph{$j$-th hitting time} as
\begin{equation}
\label{eq:jth-return}
r_A^j(x):=\sum_{i=1}^j w_A^i(x).
\end{equation}

For $u\in\R$, $p,i,\kappa,s\in\N_0$ and $\ell\in\N$, we set $U_p^{(0)}(u)=U(u)=\{X_0>u\}$ and define the following events:
\begin{align*}
U_p^{(\kappa)}(u)&:=U(u)\cap\bigcap_{i=1}^\kappa\left\{w_{U(u)}^i\leq p\right\} \qquad
U_p^{(\infty)}(u):=U(u)\cap\bigcap_{i=1}^\infty\left\{w_{U(u)}^i\leq p\right\}=\bigcap_{\kappa=0}^\infty U_p^{(\kappa)}(u)\\
Q_{p,i}^\kappa(u)&:=f^{-i}\left(U_p^{(\kappa)}(u)\setminus U_p^{(\kappa+1)}(u)\right)=f^{-i}\left(U_p^{(\kappa)}(u)\cap\{w_{U(u)}^{\kappa+1}>p\}\right)
\end{align*}
If $p=0$ then $U_0^{(\kappa)}(u)=\emptyset$ for all $\kappa\geq1$ and $Q_{0,0}^0(u)=U(u)=\{X_0>u\}$. One of the main ideas in \cite{FFT12} and further developed in \cite{FFT13} is that the events $Q_{p,0}^0(u)=\{X_0>u, X_{1}\leq u,\ldots,X_{p}\leq u\}$ (when $p>0$) play a key role in determining and identifying the clusters. In fact,  we have that every cluster ends with an entrance in $Q_{p,0}^0(u)$, meaning that the inter cluster exceedances must appear separated at most by $p$ units of time.
Hence, given an interval $I\in\S$, $x\in\X$ and $u\in\R$, we define $$N(I)(x,u)=\sum_{j\in I\cap\N_0} \I_{f^{-j}(Q_{p,0}^0(u))}(x).$$ Let $i_1(x,u)<i_2(x,u)<\ldots<i_{N(I)(x,u)}(x,u)$ denote the times at which the orbit of $x$ entered  $Q_{p,0}^0(u)$, \ie $f^{i_k(x,u)}(x)\in Q_{p,0}^0(u)$ for all $k=1, \ldots,N(I)(x,u)$.  We now define the cluster periods: for every $j=1,\ldots,N(I)(x,u)-1$ let $I_j(x,u)=(i_j(x,u), i_{j+1}(x,u)]$ and set $I_0(x,u)=[\min I, i_1(x,u)]$ and $I_{N(I)(x,u)}(x,u)=(i_{N(I)(x,u)}(x,u), \sup (I))$. In order to define the marks for each cluster we consider the following mark functions that depend on the level $u$ and on the random variables in a certain time frame $I^*\in\mathcal S$:
%
\begin{equation}
\label{eq:mark-type}m_u(\{X_i\}_{i\in I^*\cap\N_0}):=\begin{cases}
\sum_{i\in I^*\cap \N_0}(X_i-u)_+ & \text{AOT case}\\
\max_{i\in I^*\cap \N_0}\{(X_i-u)_+\} & \text{POT case}\\
\sum_{i\in I^*\cap \N_0} \I_{X_i>u} & \text{REPP case},
\end{cases}
\end{equation}
where $(y)_+=\max\{y,0\}$ and 
when $I^*\neq\emptyset$. Also set $m_u(\emptyset):=0$.

We now define the cluster marks for each $j=0,1,\ldots, N(I)(x,u)$ by:
\begin{equation*}
\label{eq:cluster-mark}
D_j(x,u):=m_u(\{X_i\}_{i\in I_j(x,u)\cap\N_0}).
\end{equation*}
Finally, we define
\begin{equation}
\label{eq:A_u-definition}
\aa_u(I)(x):=\sum_{j=0}^{N(I)(x,u)} D_j(x,u).
\end{equation}

In order to define the marked point processes in such a way that they admit a non-degenerate limit, we introduce a link between the number of observations and the thresholds by considering the sequence of levels $(u_n)_{n\in\N}$ satisfying \eqref{eq:un} and by rescaling time by the factor $$v_n:=1/\p(X_0>u_n)$$ given by Kac's Theorem so that the expected number of exceedances of the level $u_n$ in each time frame considered is kept `constant' as $n\to\infty$. Hence, we introduce the following notation.  
 For
$I=[a,b)\in\S$ and $\alpha\in \R$, we denote $\alpha I:=[\alpha
a,\alpha b)$ and $I+\alpha:=[a+\alpha,b+\alpha)$. Similarly, for
$J\in\RR$, such that $J=J_1\dot\cup\ldots\dot\cup J_k$,  define $\alpha J:=\alpha J_1\dot\cup\cdots\dot\cup \alpha J_k$ and
$J+\alpha:=(J_1+\alpha)\dot\cup\cdots\dot\cup (J_k+\alpha)$.

\begin{definition}
\label{def:MREPP}
We define the \emph{marked rare event point process} (MREPP) by setting for every $J\in\RR$, with $J=J_1\dot\cup\ldots\dot\cup J_k$, where $J_i\in \S$ for all $i=1,\ldots, k$,
\begin{equation}
\label{eq:def-MREPP} A_n(J):=\sum_{i=1}^k\aa_{u_n}(v_nJ_i).
\end{equation}
\end{definition}

When $m_u$ given in \eqref{eq:mark-type} is as in the AOT case, then the MREPP $A_n$ computes the sum of all excesses over the threshold  $u_n$ and, in such case, we will refer to $A_n$ as being an \emph{area over threshold} or AOT MREPP. Observe that in this case we may write:
$$
A_n(J)=\sum_{j\in v_nJ\cap \N_0} (X_j-u_n)_+.
$$

When $m_u$ given in \eqref{eq:mark-type} is as in the POT case, then the MREPP $A_n$ computes the sum of the largest excess (peak)  over the threshold $u_n$ within each cluster and, in such case, we will refer to $A_n$ as being a \emph{peaks over threshold} or POT MREPP. 

When $m_u$ given in \eqref{eq:mark-type} is as in the REPP case, then the MREPP $A_n$ is actually a point process that counts the number of exceedances of $u_n$ and, in such case, we will refer to $A_n$ as being a \emph{rare events point process} or REPP, as it was referred in \cite{FFT13}. Observe that in this case we have:
$$
A_n(J)=\sum_{j\in v_nJ\cap \N_0} \I_{X_j>u_n}.
$$
If $p=0$, then $Q_{p,0}^0(u_n)=U(u_n)=\{X_0>u_n\}$ and  in this case the AOT MREPP and the POT MREPP coincide and both compute  the sum of all excesses over the threshold  $u_n$. In such situation we say that $A_n$ is an \emph{excesses over threshold} (EOT) MREPP.

Now, we introduce the dependence conditions $\D_p(u_n)^*$ and $\D'_p(u_n)^*$, with the same flavour as $\D_p(u_n)$ and $\D'_p(u_n)$ considered in \cite{FFT15} but designed to establish the convergence of MREPP (whether they are of the type AOT, POT or simpler REPP), which allow us to state the main result of this paper. The mixing type condition $\D_p(u_n)^*$ also follows easily from sufficiently fast decay of correlations, which makes it particularly useful to apply to stochastic processes arising from dynamical systems, in contrast with condition $\Delta(u_n)$ used by Leadbetter in \cite{L91} or any other similar such condition available in the literature.

For $u\in\R$, $x\geq 0$, $p,i,\kappa,s\in\N_0$ and $\ell\in\N$, we define the following events:
\begin{align*}
U_{p,i}^{\kappa}(u,x)&:=f^{-i}\left(Q_{p,0}^{\kappa}(u)\cap \left\{m_u\left(\{X_j\}_{0\leq j\leq r_{U(u)}^{\kappa}}\right)>x\right\}\right)\\
U_{p,i}(u,x)&:=f^{-i}\left(\bigcup_{\kappa=0}^{\infty}U_{p,0}^\kappa(u,x)\cup U_p^{(\infty)}(u)\right)\\
R_{p,i}(u,x)&:=f^{-i}\left(U_{p,0}(u,x)\cap\left\{r_{U_{p,0}(u,x)}>p\right\}\right)\\
\II_{p,s,\ell}(u,x)&:=\bigcap_{i=s}^{s+\ell-1}\left(U_{p,i}(u,x)\right)^c \qquad \RR_{p,s,\ell}(u,x):=\bigcap_{i=s}^{s+\ell-1} \left(R_{p,i}(u,x)\right)^c
\end{align*}

In particular, for $x=0$ we have
\begin{align*}
U_{p,i}^\kappa(u,0)&=Q_{p,i}^\kappa(u)\qquad
U_{p,i}(u,0)=f^{-i}\left(\bigcup_{\kappa=0}^{\infty}Q_{p,0}^\kappa(u)\cup U_p^{(\infty)}(u)\right)=\{X_i>u\}\\
R_{p,i}(u,0)&=\{X_i>u, X_{i+1}\leq u,\ldots,X_{i+p}\leq u\}=Q_{p,i}^0(u)\\
\end{align*}
and, for $p=0$ we have
\begin{align}
&U_0^{(\kappa)}(u)=\emptyset \mbox{ for $\kappa>0$}\nonumber \qquad Q_{0,i}^0(u)=\{X_i>u\} \mbox{ and } Q_{0,i}^{\kappa}(u)=\emptyset \mbox{ for $\kappa>0$}\nonumber\\
&U_{0,i}^0(u,x)=\{X_i>u,m_u(\{X_i\})>x\} \mbox{ and } U_{0,i}^{\kappa}(u,x)=\emptyset \mbox{ for $\kappa>0$}
\label{eq:Uk-p=0}\\
&R_{0,i}(u,x)=U_{0,i}(u,x)\nonumber
\end{align}

\begin{condition}[$\D_p(u_n)^*$]\label{cond:Dp*}We say that $\D_p(u_n)^*$ holds for the sequence $X_0,X_1,X_2,\ldots$ if for $t,n\in\N$, $x_1,\ldots,x_{\varsigma}\geq 0$ and any $J=\cup_{i=2}^\varsigma I_j\in \mathcal R$ with $\inf\{x:x\in J\}\ge t$, \[ \left|\p\left(R_{p,0}(u_n,x_1)\cap \left(\cap_{j=2}^\varsigma \aa_{u_n}(I_j)\leq x_j \right) \right)-\p\left(R_{p,0}(u_n,x_1)\right)\p\left(\cap_{j=2}^\varsigma \aa_{u_n}(I_j)\leq x_j \right)\right|\leq \gamma(n,t),\]
where for each $n$ we have that $\gamma(n,t)$ is nonincreasing in $t$ and $n\gamma(n,t_n)\to 0$ as $n\rightarrow\infty$, for some sequence $t_n=o(n)$, where $\aa_{u_n}$ is given by \eqref{eq:A_u-definition}.
\end{condition}

As mentioned before, this mixing condition is easy to check for stochastic processes arising from dynamical systems with sufficiently fast decay of correlations, as can be appreciated in Theorem~\ref{thm:check-D-D'} (see also Remark~\ref{rem:L1}). This is the main advantage of this condition when compared with Leadbetter's $\Delta(u_n)$  and others of the same kind.

For some fixed $p\in\N_0$, consider the sequence $(t_n)_{n\in\N}$ given by $\D_p(u_n)^*$  and  let $(k_n)_{n\in\N}$ be such that
\begin{equation}
\label{eq:kn-sequence}
k_n\to\infty\quad \mbox{and} \quad k_n t_n = o(n).
\end{equation}

\begin{condition}[$\D'_p(u_n)^*$]\label{cond:D'p} We say that $\D'_p(u_n)^*$ holds for the sequence $X_0,X_1,X_2,\ldots$ if there exists a sequence $(k_n)_{n\in\N}$ satisfying \eqref{eq:kn-sequence} such that
\label{eq:D'rho-un}
\[\lim_{n\rightarrow\infty}\,n\sum_{j=p+1}^{\lfloor n/k_n\rfloor-1}\p\left(Q_{p,0}^0(u_n)\cap \{X_j>u_n\}\right)=0.\]
\end{condition}
In this approach, it is rather important to observe the prominent role played by condition $\D'_p(u_n)^*$. In particular, note that if condition $\D'_p(u_n)^*$ holds for some particular $p=p_0\in\N_0$, then condition $\D'_p(u_n)^*$ holds for all $p\geq p_0$. This suggests that in trying to find the existence of EVL, one should try the values $p=p_0$ until we find the smallest one that makes $\D'_p(u_n)$ hold. Assume that there exists $p\in\N_0$ such that
\begin{equation}
\label{eq:q-def EVL}
p:=\min\left\{j\in\N_0: \lim_{n\to\infty}R(Q_{j,0}^0(u_n))=\infty\right\},
\end{equation}
where $R$ is as in \eqref{eq:first-return}. 
Such $p$ is the natural candidate to try to show the validity of $\D'_p(u_n)$ and then define
\begin{equation}
\label{def:thetan}
\theta_n:=\frac{\p\left(Q_{p,0}^0(u_n)\right)}{\p(U(u_n))}.
\end{equation}
If $\D'_{p_0}(u_n)^*$ holds and if the limit of $\theta_n$ in \eqref{def:thetan} exists for such $p=p_0$, it will also exist for all $p\geq p_0$ and takes always the same value. In this case, let $\theta=\lim_{n\to\infty}\theta_n$ and then $\theta$ is called the Extremal Index (EI). 
When $p=0$, observe that $\D'_p(u_n)^*$ is condition $D'(u_n)$ from Leadbetter, which prevents clustering of exceedances. In particular, in this case $\theta_n=1$, for all $n\in\N$ and we  get an EI equal to 1. 

When $p>0$, we have clustering of exceedances, \ie the exceedances have a tendency to appear aggregated in groups (called clusters), whose mean size is typically given by the inverse of the value of the EI $\theta$. 

We will also assume: 
\begin{condition}[Multiplicity limit]
There exists a normalising sequence $(a_n)_{n\in\N}$ and a probability distribution $\pi$ such that:
\begin{equation}
\label{eq:multiplicity}
\lim_{n\to\infty}\frac{\p(R_{p,0}(u_n,x/a_n))}{\p(U(u_n))}=\theta(1-\pi(x)),\forall x\geq 0.
\end{equation}
\end{condition}
We will see that \eqref{eq:multiplicity} provides a nice formula to compute the multiplicity distribution of the limiting compound Poisson process, which will be used in Sections~\ref{subsec:POT}  and \ref{subsec:AOT}.

Finally, we give a technical condition which imposes a sufficiently fast decay of the probability of having very long clusters. We will call it $U\!LC_p(u_n)$ that stands for `Unlikely Long Clusters'. Of course this condition is trivially satisfied when there is no clustering. Moreover, this condition can be easily checked (see Proposition~\ref{prop:ULC} below) when we are dealing with the case when $\zeta$ is a repelling periodic point. 

\begin{condition}[$U\!LC_p(u_n)$]
We say that condition $U\!LC_p(u_n)$ holds if for all $y>0$
\[\limsup_{n\to\infty}\;n\, \int_0^{\infty}y\e^{-yx}\delta_{p,\lfloor n/k_n\rfloor,u_n}(x/a_n)dx  <\infty\]

where $a_n$ is as in \eqref{eq:multiplicity}, $\delta_{0,s,u}(x):=0$ and, for $p>0$,
\begin{equation}\label{eq:delta-definition}\delta_{p,s,u}(x):=\sum_{\kappa=1}^{\lfloor s/p\rfloor}\kappa p\p(U^{\kappa}_{p,0}(u,x))+s\sum_{\kappa=\lfloor s/p\rfloor+1}^{\infty}\p(U^{\kappa}_{p,0}(u,x))+p\p(U_{p,0}(u,x))\end{equation}
\[=p\sum_{\kappa=0}^{\lfloor s/p\rfloor}(\kappa+1)\p(U^{\kappa}_{p,0}(u,x))+(s+p)\sum_{\kappa=\lfloor s/p\rfloor+1}^{\infty}\p(U^{\kappa}_{p,0}(u,x))\]
is an integrable function in $\R^+$ for $u$ sufficiently close to $u_F=\varphi(\zeta)$. 
\end{condition}

Note that, by definition, condition $U\!LC_0(u_n)$ always holds.

We emphasise that this is indeed a technical condition that hardly imposes any restriction to the applications to dynamical systems. In fact, although we do not address such examples here, it can also be checked in situations when $\zeta$ is a discontinuity point as in \cite{AFV15} or when we have multiple correlated or uncorrelated maximal points $\zeta_1, \ldots, \zeta_k$ as in \cite{AFFR16}.

We are now ready to state the main convergence result:

\begin{theorem}
\label{thm:convergence}
Let $X_0,X_1,\ldots$ be given by \eqref{eq:def-stat-stoch-proc-DS} and $(u_n)_{n\in\N}$ be a sequence satisfying \eqref{eq:un}. Assume that $\D_p(u_n)^*$, $\D'_p(u_n)^*$ and $U\!LC_p(u_n)^*$ hold, for some $p\in\N_0$. Assume that $\lim_{n\to\infty}\theta_n=\theta\in(0,1]$ and the existence of a normalising sequence $(a_n)_{n\in\N}$ and a probability distribution $\pi$ such that \eqref{eq:multiplicity} holds. Then, the MREPP $a_n A_n$, where $A_n$ is given by Definition~\ref{def:MREPP} for any of the 3 mark functions considered in \eqref{eq:mark-type},  converges in distribution to a compound Poisson process $A$ with intensity $\theta$ and multiplicity d.f. $\pi$.
\end{theorem}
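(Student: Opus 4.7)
My plan is to prove convergence of the MREPP by showing convergence of the joint Laplace transforms, which by Remark~\ref{rem:convergence-point-processes} suffices. Given intervals $I_1,\ldots,I_k\in\S$ and reals $y_1,\ldots,y_k\ge 0$, the goal is to show
\[
\psi_{a_n A_n}(y_1,\ldots,y_k)\;\longrightarrow\;\exp\!\left(-\theta\sum_{\ell=1}^{k}(1-\phi(y_\ell))\,|I_\ell|\right),
\]
where $\phi$ is the Laplace transform of $\pi$; this identifies the limit as a compound Poisson process of intensity $\theta$ and multiplicity distribution $\pi$, as required.

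First I would run a blocking argument on the rescaled time axis, splitting each $v_nI_\ell$ into $k_n$ blocks of length $L_n\approx v_n|I_\ell|/k_n$ separated by gaps of length $t_n$. The mixing condition $\D_p(u_n)^*$ lets me factorise $\psi_{a_n A_n}$ as a product over these blocks, up to an error of order $nk_n\gamma(n,t_n)=o(1)$, and to discard the contribution of the $k$ gaps (whose total weighted mark is $O(kt_n/n)$ in expectation). The reason $\D_p(u_n)^*$ is formulated with the joint events $\{\aa_{u_n}(I_j)\le x_j\}$ on the right is precisely so that it can absorb this factorisation for arbitrary marked-Laplace functionals, not merely for first-hitting probabilities as in $\D_p(u_n)$.

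Next, within a single block $B$ of length $L_n$, I would analyse $\E[\exp(-y\,a_n A_n(B))]$. Using the integration by parts identity
\[
1-\E\!\left[\e^{-y a_n A_n(B)}\right]=\int_0^\infty y\e^{-yx}\p\bigl(a_n A_n(B)>x\bigr)\,\dif x,
\]
the key estimate is that stationarity together with $\D'_p(u_n)^*$ implies
\[
\p(a_n A_n(B)>x)\;=\;L_n\,\p(R_{p,0}(u_n,x/a_n))\;+\;o(1/k_n),
\]
uniformly enough in $x$. Here $\D'_p(u_n)^*$ is what guarantees that distinct clusters in $B$ are separated by gaps longer than $p$ (so $R_{p,0}$ really counts cluster starts with mark $>x/a_n$), and condition $U\!LC_p(u_n)$ is what dominates the integrand so Fubini and dominated convergence are applicable: it bounds the expected total length of long clusters originating in $B$ by an integrable majorant in $x$. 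Invoking hypothesis \eqref{eq:multiplicity} and $nL_n^{-1}\to k_n/|I|$ then gives
\[
\E\!\left[\e^{-y a_n A_n(B)}\right]\;=\;1-\frac{|I|}{k_n}\,\theta(1-\phi(y))+o(1/k_n).
\]

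Finally I would raise this to the $k_n$-th power per interval, multiply across the $k$ intervals (here using disjointness of the $I_\ell$ and re-using the $\D_p(u_n)^*$ factorisation), and take $n\to\infty$ to obtain the stated exponential limit. The main obstacle I anticipate is Step~2: the passage from $\p(a_n A_n(B)>x)$ to $L_n\p(R_{p,0}(u_n,x/a_n))$ requires careful bookkeeping of (i) boundary clusters that straddle a block edge, (ii) the event of two or more clusters in the same block (of probability $O((t/k_n)^2)$), and (iii) the pathological event $U_p^{(\infty)}(u_n)$ of an infinite cluster; items (i) and (iii) are exactly what condition $U\!LC_p(u_n)$ is designed to control, through the bound on $\delta_{p,\lfloor n/k_n\rfloor,u_n}$, and (ii) follows from $\D'_p(u_n)^*$ by the same argument used for the existence of $\theta$ via O'Brien's formula. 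The special case $p=0$ is easier: then $R_{p,0}(u_n,x)=\{X_0>u_n,\,(X_0-u_n)_+>x\}$, $U\!LC_0(u_n)$ is vacuous, and the AOT/POT processes coincide with the EOT MREPP.
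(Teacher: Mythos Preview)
Your strategy is essentially the paper's: joint Laplace transforms, blocking with gaps $t_n$, factorisation via $\D_p(u_n)^*$, and single-block estimates via the tail-integral identity controlled by $\D'_p(u_n)^*$ and $U\!LC_p(u_n)$. Two points need sharpening.

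First, the mixing-error bookkeeping is off by a factor of $k_n$. Peeling off a block of length $\varrho_n\approx n/k_n$ requires replacing $\{\aa_{u_n}(J_i^*)\le x\}$ by the no-entrance event $\RR_{p,0,\varrho_n}(u_n,x)$, whose complement decomposes as a union of $\varrho_n$ translates of $R_{p,0}$; applying $\D_p(u_n)^*$ to each contributes $\varrho_n\gamma(n,t_n)$ per block, and summing over $\sim k_n$ blocks yields $\sim n\gamma(n,t_n)\to 0$. Your stated $nk_n\gamma(n,t_n)$ would not vanish under the hypothesis $n\gamma(n,t_n)\to 0$.

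Second, and more substantively, you underestimate the passage from the distribution-function form of $\D_p(u_n)^*$ to a Laplace-transform factorisation. The condition only controls probabilities of the shape $\p\bigl(R_{p,0}(u_n,x_1)\cap\bigcap_{j\ge 2}\{\aa_{u_n}(I_j)\le x_j\}\bigr)$; to deduce that $\E\bigl(\exp(-y_1 a_n\aa_{u_n}(J_1^*)-\sum_{j\ge 2}y_j a_n\aa_{u_n}(I_j))\bigr)$ approximately factorises you must express the exponential of the remaining blocks as an integral against the \emph{joint} distribution function of $(\aa_{u_n}(I_2),\ldots,\aa_{u_n}(I_\varsigma))$ and then push the mixing bound through that integral. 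Your one-variable integration by parts in Step~2 handles only the first block; a genuine multi-dimensional Stieltjes integration-by-parts identity is required here. The paper isolates this as Proposition~\ref{prop:Laplace} (applied in Corollary~\ref{cor:fgm-main-estimate}) and explicitly flags it as the main technical novelty over \cite{FFT13}, where integer-valued marks permitted a discrete decomposition and the issue did not arise. This does not break your plan, but it is a real intermediate step you have not identified.
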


\begin{remark}
\label{rem:general-marks}
In the proof of this theorem what is essential about the mark function $m_u$ considered in \eqref{eq:mark-type} to define the respective MREPP is that it satisfies the following assumptions:
\begin{enumerate}
\item $m_u(\{X_i\}_{i\in I^*\cap\N_0})\geq 0$ and $m_u(\emptyset)=0$
\item $m_u(\{X_i\}_{i\in I^*\cap\N_0})\leq m_u(\{X_i\}_{i\in J^*\cap\N_0})$ if $I^*\subset J^*$
\item $m_u(\{X_i\}_{i\in I^*\cap\N_0})=m_u(\{X_i\}_{i\in J^*\cap\N_0})$ if $X_i\leq u,\forall i\in (I^*\setminus J^*)\cap\N_0$
\end{enumerate}
Note that, in particular, we must have $m_u(\{X_i\}_{i\in I^*\cap\N_0})=0 \mbox{ if } X_i\leq u,\forall i\in I^*\cap\N_0$.

As long as the above assumptions hold then the conclusion of Theorem~\ref{thm:convergence} holds for the MREPP defined from such a mark function $m_u$ satisfying the three assumptions just enumerated.
\end{remark}

\begin{remark}
\label{rem:stationary-processes}
The main purpose of this paper is to develop a methodology to prove the convergence of marked rare events point processes for stochastic processes arising from chaotic dynamics. For that reason we assume a priori that the processes are generated as in \eqref{eq:def-stat-stoch-proc-DS}. However, Theorem~\ref{thm:convergence} holds for general stationary stochastic processes, which can be seen by realising that every stationary stochastic process can be modelled by  \eqref{eq:def-stat-stoch-proc-DS}. In fact, if $X_0, X_1, \ldots$ is a stationary stochastic process, then taking $\mathcal X$ as the space of each possible realisation of the stochastic process, $f$ as the shift map on such space and $\varphi$ as the projection on the first coordinate, we can write any stationary stochastic process in the form given by  \eqref{eq:def-stat-stoch-proc-DS}.
\end{remark}

In order to have an idea of the scope of applications to specific dynamical systems we consider the type of properties that a system must have in order to check the abstract conditions of Theorem~\ref{thm:convergence}.

First we start by understanding what exceeding a high threshold means in terms of the dynamics. To that end,
we suppose that the r.v. $\varphi:\X\to\R\cup\{\pm\infty\}$
achieves its maximum value at a finite number of points, namely, $\zeta_1,\ldots, \zeta_N\in \X$ (we allow
$\varphi(\zeta_i)=+\infty$).

We assume that $\varphi$ and $\p$ are sufficiently regular, so that 
\begin{enumerate}
\item[{(R1)}]
for $u$ sufficiently close to $u_F:=\varphi(\zeta_i)$ $(i\in \{1,\ldots,N\})$,
\begin{equation*}
\label{def:U}
U(u):=\{x\in\mathcal{X}:\; \varphi(x)>u\}=\{X_0>u\}
\end{equation*}
corresponds to a disjoint union of balls  centred at the points $\zeta_i$, i.e., $U(u)=\bigcup_{i=1}^NB_{\varepsilon_i}(\zeta_i)$
with $\varepsilon_i=\varepsilon_i(u)$. Moreover, the quantity $\p(U(u))$, as a function of $u$, varies continuously on a neighborhood of $u_F$.
\end{enumerate}

The conditions $\D_p(u_n)^*$ and $\D'_p(u_n)^*$ are conditions on the long range and short range dependence structure of the processes, respectively, and they can be easily checked if the system has some strong form of decay of correlations such as decay of correlations against $L^1$ observables, which we define next.

\begin{definition}[Decay of correlations]
\label{def:dc}
Let \( \mathcal C_{1}, \mathcal C_{2} \) denote Banach spaces of real valued measurable functions defined on \( \X \).
We denote the \emph{correlation} of non-zero functions $\phi\in \mathcal C_{1}$ and  \( \psi\in \mathcal C_{2} \) w.r.t.\ a measure $\p$ as
\[
\cv_\p(\phi,\psi,n):=\frac{1}{\|\phi\|_{\mathcal C_{1}}\|\psi\|_{\mathcal C_{2}}}
\left|\int \phi\, (\psi\circ f^n)\, \dif\p-\int  \phi\, \dif\p\int
\psi\, \dif\p\right|.
\]

We say that we have \emph{decay
of correlations}, w.r.t.\ the measure $\p$, for observables in $\mathcal C_1$ \emph{against}
observables in $\mathcal C_2$ if, for every $\phi\in\mathcal C_1$ and every
$\psi\in\mathcal C_2$ we have
 $$\cv_\p(\phi,\psi,n)\to 0,\quad\text{ as $n\to\infty$.}$$
  \end{definition}

We say that we have \emph{decay of correlations against $L^1$
observables} whenever  this holds for $\mathcal C_2=L^1(\p)$  and
$\|\psi\|_{\mathcal C_{2}}=\|\psi\|_1=\int |\psi|\,\dif\p$.

Examples of systems with such property include:
\begin{itemize}

\item Uniformly expanding maps on the circle/interval (see \cite{BG97});

\item Markov maps (see \cite{BG97});

\item Piecewise expanding maps of the interval with countably many branches like Rychlik maps (see \cite{R83});

\item Higher dimensional piecewise expanding maps studied by Saussol in \cite{S00}.

\end{itemize}

\begin{remark}
\label{rem:C-space}
In the first three examples above the Banach space $\mathcal C_1$ for the decay of correlations can be taken as the space of functions of bounded variation. In the fourth example  the Banach space $\mathcal C_1$ is the space of functions with finite quasi-H\"older norm studied in \cite{S00}. We refer the readers to \cite{BG97,S00} or \cite{AFV15} for precise definitions but mention that if $I\subset\R$ is an interval then $\I_I$ is of bounded variation and its BV-norm is equal to 2, \ie $\|\I_I\|_{BV}=2$, and if $A$ denotes a ball or an annulus then $\I_A$ has a finite quasi-H\"older norm.
\end{remark}

\begin{theorem}
\label{thm:check-D-D'}
Let $f:\X\to\X$ be a system with summable decay of correlations against $L^1$ observables, \ie for all $\phi\in\mathcal C_1$ and $\psi\in L^1$, then $\cv(\phi,\psi,n)\leq \rho_n$, with $\sum_{n\geq 1}\rho_n<\infty$. Assume that there exists $p\in\N_0$ such that \eqref{eq:q-def EVL} holds and there exists $C>0$ such that for all $n\in\N$ and $x\in\R_0^+$ we have $\I_{R_{p,0}(u_n,x)}\in \mathcal C_1$ and $\|\I_{R_{p,0}(u_n,x)}\|_{\mathcal C_1}\leq C$. Then conditions $\D_p(u_n)^*$ and $\D'_p(u_n)^*$ hold.
\end{theorem}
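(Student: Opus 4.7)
The plan is to verify each of the two conditions $\D_p(u_n)^*$ and $\D'_p(u_n)^*$ separately by a direct appeal to the decay of correlations hypothesis. In each case the ``left'' observable is the indicator of a set whose $\mathcal C_1$-norm is controlled by the hypothesis, and the ``right'' observable is the indicator of an $L^1$ event lying in the future by a gap of $t$ (resp.\ $j$) iterates.

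For $\D_p(u_n)^*$: I would take $\phi:=\I_{R_{p,0}(u_n,x_1)}$, which by hypothesis is in $\mathcal C_1$ with $\|\phi\|_{\mathcal C_1}\le C$. Since each mark $\aa_{u_n}(I_j)$ depends only on the iterates indexed by $I_j\subset J$, and $\inf J\ge t$, the joint event $E:=\bigcap_{j=2}^{\varsigma}\{\aa_{u_n}(I_j)\le x_j\}$ admits the representation $E=f^{-t}(E')$ for some measurable $E'\subset\X$. Setting $\psi:=\I_{E'}\in L^1(\p)$ with $\|\psi\|_1=\p(E')\le 1$, decay of correlations at time $t$ yields
\[\bigl|\p(R_{p,0}(u_n,x_1)\cap f^{-t}E')-\p(R_{p,0}(u_n,x_1))\,\p(E')\bigr|\le C\rho_t.\]
Using $f$-invariance ($\p(E)=\p(f^{-t}E')=\p(E')$), this shows that $\gamma(n,t):=C\rho_t$ suffices. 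Summability of $(\rho_n)$ lets us extract $t_n=o(n)$ with $n\rho_{t_n}\to 0$ by a standard diagonal construction (since $\rho_n\to 0$, for each $k$ pick $N_k$ with $\rho_n<1/k^2$ for $n\ge N_k$, and assign $t_n=N_k$ on intervals where $N_k/n$ is small).

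For $\D'_p(u_n)^*$: I would exploit that $Q_{p,0}^0(u_n)=R_{p,0}(u_n,0)$, so $\I_{Q_{p,0}^0(u_n)}$ lies in $\mathcal C_1$ with norm at most $C$. Since $\{X_j>u_n\}=f^{-j}U(u_n)$ and $\|\I_{U(u_n)}\|_1=\p(U(u_n))$, decay of correlations at time $j$ gives
\[\p\bigl(Q_{p,0}^0(u_n)\cap\{X_j>u_n\}\bigr)\le\p(Q_{p,0}^0(u_n))\,\p(U(u_n))+C\p(U(u_n))\rho_j.\]
Multiplying by $n$ and summing over $j\in\{p+1,\ldots,\lfloor n/k_n\rfloor-1\}$, the ``main'' contribution is bounded by $(n\p(U(u_n)))^2/k_n\to\tau^2/k_n\to 0$. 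The ``error'' contribution $Cn\p(U(u_n))\sum_{j}\rho_j$ requires more care; I would split the $j$-range at some $g_n$ with $g_n\to\infty$ and $g_n<R(Q_{p,0}^0(u_n))$, which is feasible because \eqref{eq:q-def EVL} guarantees $R(Q_{p,0}^0(u_n))\to\infty$. For $j>g_n$ the contribution is at most $Cn\p(U(u_n))\sum_{j>g_n}\rho_j\to 0$ by summability of $(\rho_j)$ and $n\p(U(u_n))\to\tau$; for $j\le g_n$ the head probabilities $\p(Q_{p,0}^0(u_n)\cap\{X_j>u_n\})$ are controlled using the first-return-time hypothesis, which prevents the orbits leaving $Q_{p,0}^0(u_n)$ from re-entering $U(u_n)$ before time $R(Q_{p,0}^0(u_n))$.

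The step I expect to cost the most effort is the head-of-sum estimate in $\D'_p(u_n)^*$: a naive termwise application of DOC leaves a residual of order $Cn\p(U(u_n))\sum_j\rho_j\sim C\tau\sum_j\rho_j$, which is bounded but not $o(1)$. Consequently one really must invoke the structural assumption \eqref{eq:q-def EVL} to kill the head, and the delicate point is to choose $g_n$ large enough that $\sum_{j>g_n}\rho_j$ is a genuine tail yet small enough to remain below $R(Q_{p,0}^0(u_n))$. Once this balance is struck, both conditions follow by routine manipulation of the decay of correlations inequality.
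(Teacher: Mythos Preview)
Your proposal is correct and follows the paper's proof essentially verbatim: for $\D_p(u_n)^*$ one takes $\phi=\I_{R_{p,0}(u_n,x_1)}$ against the $L^1$ indicator of the future event to get $\gamma(n,t)=C\rho_t$, and for $\D'_p(u_n)^*$ one takes $\phi=\I_{Q_{p,0}^0(u_n)}=\I_{R_{p,0}(u_n,0)}$ and $\psi=\I_{U(u_n)}$ to obtain the same termwise bound you wrote. The only difference is that the paper dispatches your ``head'' in one line by observing that the sum effectively begins at $j=R(Q_{p,0}^0(u_n))\to\infty$, so there is no delicate balance to strike---taking $g_n=R(Q_{p,0}^0(u_n))-1$ already gives $g_n\to\infty$ (hence $\sum_{j>g_n}\rho_j\to 0$ by summability) and a vanishing head simultaneously.
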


\begin{remark}
\label{rem:Holder-norm}
Although we assumed for simplicity that $\I_{R_{p,0}(u_n,x)}\in \mathcal C_1$ in the last theorem to simplify the proof of $\D_p(u_n)^*$, which can easily be verified when $\mathcal C_1$ is the space of functions of bounded variation or quasi-H\"older, one can still check condition $\D_p(u_n)^*$ when $\mathcal C_1$ is the space of H\"older functions, for example, in which case we have $\I_{R_{p,0}(u_n,x)}\notin \mathcal C_1$. This can be proved with minor adjustments to \cite[Proposition~3.1]{FFT13}.
\end{remark}

As shown in \cite{FFT12} the appearance of clustering of exceedances in a dynamical setting is associated to periodic behaviour. This was seen in \cite{HV09, FFT12, FFT13} when the maximum value of $\varphi$ is attained at a single point $\zeta$ that happens to be a repelling periodic point \footnote{\label{repelling periodic point} We say that $\zeta$ is a periodic point of prime period $p$ if $f^p(\zeta)=\zeta$ and $f^j(\zeta)\neq\zeta$, for all $j=1,\ldots,p-1$. A periodic point is said to be repelling if $Df^p$ is defined at $\zeta$ and $\|(Df^p(\zeta))^{-1}\|<1$, where $\|\cdot\|$ is the norm on the tangent space to $\X$ at $\zeta$ given by the Riemannian structure.} but, as in the \cite{AFFR16}, it can also appear due to fake periodicity created by taking multiple maximal points which are related by belonging to the orbit of the same point $\xi$. To illustrate that condition $U\!LC_p(u_n)$ is very easily checked, we show that it holds whenever we have a single maximum $\zeta$, which is a repelling periodic point of prime period $p$. Assume that $\varphi$ and $\p$ are sufficiently regular at $\zeta$ so that:
\begin{enumerate}
\item[\namedlabel{item:repeller}{(R2)}]
the periodicity of $\zeta$ implies that for all large $u$, $\{X_0>u\}\cap f^{-p}(\{X_0>u\})\neq\emptyset$ and the fact that the prime period is $p$ implies that $\{X_0>u\}\cap f^{-j}(\{X_0>u\})=\emptyset$ for all $j=1,\ldots,p-1$. The fact that $\zeta$ is repelling means that we have backward contraction implying that
$U_p^{(\infty)}(u)=\{\zeta\}$
and implying that there exists $0<\theta<1$ so that $\bigcap_{j=0}^\kappa f^{-jp}(X_0>u)$ is a ball around $\zeta$ with
\[\p\left(\bigcap_{j=0}^\kappa f^{-jp}(\{X_0>u\})\right)\sim(1-\theta)^\kappa\p(X_0>u)\]
\end{enumerate}

\begin{proposition}
\label{prop:ULC}
Let $f:\X\to\X$ be a system and $\varphi:\X\to\R$ have global maximum at $\zeta$, which is a repelling periodic point of prime period $p$ for which $(R2)$ holds. Then condition $U\!LC_p(u_n)$ is satisfied. 
\end{proposition}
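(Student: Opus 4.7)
The plan is to reduce everything to the geometric decay of $\p(U_p^{(\kappa)}(u))$ in $\kappa$ that is provided by \ref{item:repeller}, and observe that the dependence of $\delta_{p,s,u}(x)$ on $x$ can be removed by a single uniform upper bound.

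First I would exploit the prime-period part of \ref{item:repeller} to show that on $U(u)$ the waiting times $w^i_{U(u)}$ cannot take values in $\{1,\ldots,p-1\}$, so $\{w^i_{U(u)}\leq p\}=\{w^i_{U(u)}=p\}$ on $U(u)$. Iterating this observation,
\[
U_p^{(\kappa)}(u)=\bigcap_{j=0}^{\kappa}f^{-jp}(U(u)),
\]
and by the second half of \ref{item:repeller} there exists $C_0>0$ (independent of $\kappa$ and of $u$ sufficiently close to $u_F$, using the backward contraction by $(Df^p(\zeta))^{-1}$ together with standard bounded distortion of $f^p$ near $\zeta$) such that
\[
\p\bigl(U_p^{(\kappa)}(u)\bigr)\leq C_0(1-\theta)^{\kappa}\p(U(u)).
\]
Since, by construction, $U^{\kappa}_{p,0}(u,x)\subset Q^{\kappa}_{p,0}(u)\subset U_p^{(\kappa)}(u)$, this yields the key bound
\[
\p\bigl(U^{\kappa}_{p,0}(u,x)\bigr)\leq C_0(1-\theta)^{\kappa}\p(U(u)),
\]
uniformly in $x\geq 0$.

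Next I would substitute this estimate into both halves of $\delta_{p,s,u}(x)$. The first sum is dominated by $pC_0\p(U(u))\sum_{\kappa=0}^{\infty}(\kappa+1)(1-\theta)^{\kappa}=pC_0\p(U(u))/\theta^{2}$, and the second by $(s+p)C_0\p(U(u))(1-\theta)^{\lfloor s/p\rfloor+1}/\theta$. Both upper bounds are independent of $x$, so pulling them out of the integral against $y\e^{-yx}$ (which integrates to $1$) gives, with $s_n:=\lfloor n/k_n\rfloor$,
\[
n\int_{0}^{\infty}y\e^{-yx}\delta_{p,s_n,u_n}(x/a_n)\,dx\leq \frac{pC_0}{\theta^{2}}\,n\p(U(u_n))+\frac{C_0}{\theta}(s_n+p)(1-\theta)^{\lfloor s_n/p\rfloor+1}n\p(U(u_n)).
\]
Using \eqref{eq:un}, $n\p(U(u_n))\to\tau$, so the first term converges to $pC_0\tau/\theta^{2}<\infty$, while for the second term the factor $(s_n+p)(1-\theta)^{\lfloor s_n/p\rfloor+1}\to 0$ because $s_n\to\infty$ (as $k_n=o(n)$ and $k_n\to\infty$). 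Taking $\limsup$ yields the desired finiteness, which together with the obvious integrability of $\delta_{p,s,u_n}(\cdot/a_n)$ that follows from the same uniform bound, completes the verification of $U\!LC_p(u_n)$.

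The only genuinely delicate point is the uniformity of the constant $C_0$ in $\kappa$ in the first step: \ref{item:repeller} phrases the geometric decay asymptotically in $u$ for each fixed $\kappa$, but the nested structure $U_p^{(\kappa+1)}(u)=U_p^{(\kappa)}(u)\cap f^{-(\kappa+1)p}(U(u))$ combined with the linearised inverse $(Df^p(\zeta))^{-1}$ at the repelling periodic point gives uniform control once a single $\kappa$-independent distortion constant is extracted near $\zeta$. Apart from this, the argument is just the geometric-series computation above.
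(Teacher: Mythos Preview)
Your argument is correct and follows essentially the same route as the paper: bound $\p(U^{\kappa}_{p,0}(u,x))$ uniformly in $x$ by a geometric quantity in $\kappa$ via \ref{item:repeller}, then sum the resulting geometric series and use \eqref{eq:un}. The paper streamlines your two-part estimate of $\delta_{p,s,u}$ into the single bound $\delta_{p,s,u}(x)\leq p\sum_{\kappa\geq 0}(\kappa+1)\p(Q^{\kappa}_{p,0}(u))$ (which absorbs your second sum since $s/p+1\leq\kappa+1$ for $\kappa>\lfloor s/p\rfloor$), but otherwise the computations coincide, and your explicit discussion of the uniformity of $C_0$ in $\kappa$ is a point the paper leaves implicit in its use of $\sim$.
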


\begin{remark}
\label{rem:ULC}
We remark that for examples considered in \cite{AFFR16}, condition $U\!LC_p(u_n)$ can also be checked with the same amount of effort necessary to prove the last proposition. For its proof see the end of Section~\ref{subsec:single-maximum}.
\end{remark}

The assumption on decay of correlations against $L^1$ observables is quite strong. In fact, as shown in \cite{AFLV11}, summable decay of correlations against $L^1$ implies exponential decay of correlations  of H\"older observables against $L^\infty$ ones. From the examples listed above, one perceives that it holds essentially (and up to our best knowledge) in uniformly expanding realm.  

One way of expanding the scope of applications is to consider systems which admit nice first-return induced maps, for which we can prove the existence of limits for the MREPP and then pass that information for the original system. In \cite{BSTV03}, the authors showed that the original system and the first return induced system shared the same Hitting Times Statistics 
for ball targets shrinking to $\zeta$ (which plays the role of the single maximum of $\varphi$). Their statement held for a.e $\zeta$ and the standard exponential law applied. Then in \cite{FFT13}, the authors showed that the same limit for REPP applies for the original system and the first return induced system when $\zeta$ is a repelling periodic point. In \cite{HWZ14}, the result of \cite{BSTV03} was generalised to all points $\zeta$ and in \cite{FFTV16} the latter was generalised for the convergence of REPP. However, the statement of \cite[Theorem~3]{FFTV16} holds only for point processes and its proof relies on  \cite[Corollary~6]{Z07} that was only proved for point processes also. Hence, in order to be able to extend our results here for systems admitting a nice first-return induced map, we need to prove a generalisation of 
\cite[Theorem~3]{FFTV16} to atomic random measures, for which we cannot use  \cite[Corollary~6]{Z07}. Hence, we will prove Theorem~\ref{thm:mpp_ret_orig} below.

Let $f:\X\to \X$ be a system with an ergodic $f$-invariant probability measure $\p$, choose a subset $B\subset \X$ and consider $F_B:B\to B$ to be the first return map $f^{r_B}$ to $B$ (note that $F_B$ may be undefined at a zero Lebesgue measure set of points which do not return to $B$, but most of these points are not important, so we will abuse notation here). Let $\p_B(\cdot)=\frac{\p(\cdot\; \cap B)}{\p(B)}$ be the conditional measure on $B$. By Kac's Theorem $\p_B$ is $F_B$-invariant.

Setting $v_n^B=\frac{1}{\p_B(X_0>u_n)}$, for the induced process $X_i^B=\varphi\circ F_B^i$ we define for every $J\in\RR$, with $J=J_1\dot\cup\ldots\dot\cup J_k$ and $J_i\in\S$ for all $i=1,\ldots, k$,
$$A_n^B(J):=\sum_{i=1}^k\aa_{u_n}^B(v_n^BJ_i).$$
where, for every interval $I\in\S$,
$$\aa_u^B(I)(x):=\sum_{j=0}^{N(I)(x,u)} m_u(\{X_i^B\}_{i\in I_j(x,u)\cap\N_0}).$$

For an interval $I\in\mathcal S$ and $\eps<|I|$ we define: 
$$I^{\eps+}=(I+\eps)\cup(I-\eps)\in\mathcal S\qquad I^{\eps -}=(I+\eps)\cap (I-\eps) \in\mathcal S.$$
If $J\in\mathcal R$ we define $J^{\eps\pm}$ accordingly.

\begin{theorem}
For $\eps>0$, we assume that the limit marked point process $A(I^{\eps\pm})$ is continuous in $\eps$, for all small $\eps$. Also assume that for $n$ sufficiently large we have $U(u_n)\subset B\in\mathcal B$. Then
$$A_n^B\stackrel{\p_B}{\Longrightarrow} A \text{ as } n\to\infty \text{ implies } A_n \stackrel{\p}{\Longrightarrow} A \text{ as } n\to\infty.$$
\label{thm:mpp_ret_orig}
\end{theorem}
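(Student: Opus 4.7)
The strategy is to adapt the two-stage approach of \cite{FFTV15} to atomic random measures. Roughly, one first transfers the convergence along the induced map (under $\p_B$) to convergence of $A_n$ under $\p_B$, and then uses a Kakutani tower / Zweim\"uller-type argument to upgrade $\p_B$-convergence to $\p$-convergence. The key technical extension over \cite{FFTV15} is that we must work with MREPPs, which take values in the atomic-measure space $\M_a$, whereas \cite[Corollary 6]{Z07} was proved only for the point-process space $\M_p$.

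\textbf{Step 1 (induced process to original process, both under $\p_B$).} Since $U(u_n)\subset B$ for large $n$, every exceedance of the $f$-orbit of $x\in B$ occurs at some time $i=r_B^k(x)$ for which $F_B^k(x)\in U(u_n)$, and conversely. Moreover $X_{r_B^k(x)}(x)=X^B_k(x)$, so the marks attached to the $k$-th $F_B$-exceedance match those attached to the corresponding $f$-exceedance (for AOT and REPP this is automatic by additivity; for POT one checks that clusters in $F_B$-time correspond to clusters in $f$-time through the return-time bijection). Applying Birkhoff's theorem to $F_B$ with observable $r_B\in L^1(\p_B)$, together with Kac's identity $\int r_B\,d\p_B=1/\p(B)$ and the relation $v_n^B=\p(B)\,v_n$, one obtains, for $\p_B$-a.e.\ $x$,
\[
\frac{r_B^{\lfloor v_n^B t\rfloor}(x)}{v_n t}\longrightarrow 1\qquad(n\to\infty).
\]
Hence for each $\eps>0$, each $J\in\S$, and each $x$ in a full $\p_B$-measure set, for $n$ sufficiently large,
\[
A_n^B(J^{\eps-})(x)\;\le\;A_n(J)(x)\;\le\;A_n^B(J^{\eps+})(x).
\]
Letting $n\to\infty$, invoking the hypothesis $A_n^B\stackrel{\p_B}{\Longrightarrow}A$, and then sending $\eps\to 0$ using the prescribed continuity of $A(J^{\eps\pm})$, one concludes $A_n\stackrel{\p_B}{\Longrightarrow}A$.

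\textbf{Step 2 ($\p_B$ to $\p$ via a Kakutani tower).} Let $B_k=f^k(B\cap\{r_B>k\})$, so that $\X=\bigsqcup_{k\ge 0} B_k$ is the Kakutani skyscraper decomposition. For any bounded continuous $\varphi:\M_a\to\R$ in the vague topology, $f$-invariance of $\p$ gives
\[
\E_{\p}[\varphi(A_n)]\;=\;\sum_{k\ge 0}\int_{B\cap\{r_B>k\}}\varphi(A_n\circ f^k)\,d\p.
\]
The crucial observation is that $A_n\circ f^k$ is nothing but the time-translate of $A_n$ by $k/v_n$ in rescaled units, i.e.\ $A_n(J)(f^k(x))=A_n(J+k/v_n)(x)$ up to a boundary error that, for fixed $k$, becomes negligible as $n\to\infty$. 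Since $k/v_n\to 0$ and $A(J^{\eps\pm})$ is continuous in $\eps$, for each fixed $k$ the distribution of $A_n\circ f^k$ under $\p_B$ converges vaguely to that of $A$ (by Step 1). Dominated convergence, with bound $\|\varphi\|_\infty$ and summable majorant $\sum_k \p(r_B>k)=1$ (by Kac), yields $\E_\p[\varphi(A_n)]\to\E[\varphi(A)]$, so $A_n\stackrel{\p}{\Longrightarrow}A$.

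The main obstacle will be handling the cluster structure in Step 1 for the POT mark, where one must identify a cluster of the induced process with a genuine cluster of the original process (rather than, say, two $F_B$-clusters being fused into one $f$-cluster or vice versa); this reduces to comparing the thresholds $p$ and $p^B$ via the return time $r_B$, combined with the Birkhoff-type control above. A secondary technicality is to verify that the Zweim\"uller argument in Step 2 really does adapt to the atomic-measure setting: one cannot simply quote \cite[Corollary 6]{Z07}, but the vague-topology continuity of $\varphi$ together with the $\eps$-boundary continuity of the limit $A$ provides the necessary tightness and continuous mapping input.
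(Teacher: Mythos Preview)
Your overall two-stage strategy matches the paper's: pass from $A_n^B$ to $A_n$ under $\p_B$ (your Step~1 $\leftrightarrow$ the paper's final lemma), then from $\p_B$ to $\p$ via the Kakutani tower (your Step~2 $\leftrightarrow$ the paper's first two lemmas). Step~1 is essentially what the paper does, and you correctly flag the POT clustering issue.

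There is, however, a genuine gap in Step~2. From
\[
\E_{\p}[\varphi(A_n)]=\sum_{k\ge 0}\int_{B\cap\{r_B>k\}}\varphi(A_n\circ f^k)\,d\p
\]
you want each summand to converge to $\p(B\cap\{r_B>k\})\,\E[\varphi(A)]$ and then invoke DCT. But knowing $A_n\stackrel{\p_B}{\Longrightarrow}A$ (Step~1) gives convergence of $\E_{\p_B}[\varphi(A_n)]$, \emph{not} of $\E_{\p_B}[\I_{\{r_B>k\}}\varphi(A_n)]$; weak convergence does not pass to fixed conditioning events without an asymptotic independence statement between $A_n$ and $\{r_B>k\}$, which you never establish. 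Equivalently, after summing in $k$ your argument reduces to showing $\int_B r_B\,\varphi(A_n)\,d\p\to \E[\varphi(A)]$, i.e.\ that the weight $r_B$ decouples from $A_n$, and this is exactly the missing piece.

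The paper supplies this decoupling with an additional, non-obvious step (its middle lemma): using $F_B$-invariance one rewrites $\int_B r_B\,\I_{\{A_n(J)>x\}}\,d\p=\frac1M\sum_{j=0}^{M-1}\int_B (r_B\circ F_B^j)\,(\I_{\{A_n(J)>x\}}\circ F_B^j)\,d\p$, then applies Birkhoff to replace $\frac1M\sum_j r_B\circ F_B^j$ by $\p(B)^{-1}$ while controlling $\I_{\{A_n(J)>x\}}\circ F_B^j$ by $\I_{\{A_n(J^{\eps\pm})>x\}}$ via the same small-shift argument. Note too that the paper works throughout with \emph{monotone} test functions $\I_{\{A_n(J)>x\}}$ (or, what is equivalent, joint Laplace transforms), so that the pathwise sandwich $A_n(J^{\eps-})\le A_n(J)\circ f^k\le A_n(J^{\eps+})$ propagates to the integrals; your use of a generic bounded continuous $\varphi$ on $\M_a$ blocks this monotone sandwich. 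To repair Step~2 you should either (i) restrict to Laplace functionals/CDFs and insert the Birkhoff-averaging lemma above, or (ii) supply an explicit asymptotic-independence argument for $A_n$ and $\{r_B>k\}$, which is strictly harder.
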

 
As consequence, if we have a system $f:\X\to\X$ that admits a first-return time induced systems $F_B:B\to B$ such that $F_B$ has decay of correlations against $L^1$ so that we can apply Theorem~\ref{thm:check-D-D'} to prove the convergence of an MREPP then we may use Theorem~\ref{thm:mpp_ret_orig} to prove convergence of the corresponding MREPP for the original system $f$.

Two examples of systems that admit such `nice' first-return induced maps are:
\begin{itemize}

\item \emph{Manneville-Pomeau} (MP) map equipped with an absolutely continuous invariant probability measure.  The form for such maps given in \cite{LSV99, BSTV03} is,   for $\alpha\in (0,1)$,
\begin{equation*}
f=f_\alpha(x)=\begin{cases} x(1+2^\alpha x^\alpha) & \text{ for } x\in [0, 1/2)\\
2x-1 & \text{ for } x\in [1/2, 1]\end{cases}
\end{equation*}
Members of this family of maps are often referred to as Liverani-Saussol-Vaienti maps since their actual equation was first introduced in \cite{LSV99}. Let $\P$ be the \emph{renewal partition}, that is the partition defined inductively by $\cyl\in \P$ if $\cyl=[1/2, 1)$ or $f(\cyl)\in \P$.  Now let $Y\in \P$ and let $F_Y$ be the first return map to $Y$ and $\mu_Y$ be the conditional measure on $Y$.  It is well-known that $(Y, F_Y, \mu_Y)$ is a Bernoulli map and hence, in particular, a Rychlik system (see \cite{R83} or \cite[Section~3.2.1]{AFV15} for the essential information about such systems).

\item  We consider a class of $C^3$ unimodal interval maps $f:I \to I$ with an invariant probability measure absolutely continuous with respect to Lebesgue measure.  Let $c$ be the critical point.  Such a map is called \emph{$S$-unimodal} if it has negative \emph{Schwarzian derivative}, i.e., $D^3f(x)/Df(x) - \frac32 (D^2f(x)/Df(x))^2<0$ for any $x\in I\sm\{c\}$.  We say that $c$ is \emph{non-flat} if there exists $\ell\in (1,\infty)$ such that $\lim_{x\to c}|f(x)-f(c)|/|x-c|^\ell$ exists and is positive. Here $\ell$ is called the \emph{order} of the critical point.

As in \cite{BSTV03}, if the critical point has an orbit which is not dense in $I$ (e.g. the Misiurewicz case), it is possible to construct a first return map which gives a Rychlik system.

\end{itemize}

In Section~\ref{sec:applications} we will address the issue of the convergence in \eqref{eq:multiplicity} which is related to the shape of the observable $\varphi$ and its behaviour near its maximum value, as well as to the regularity of $\p$. In particular, for certain examples of dynamical systems we will show the convergence of AOT, POT MREPP and compute its limit multiplicity distributions.

\section{Applications to dynamical systems}
\label{sec:applications}

\subsection{The conditions on the dependence structure of the processes}

We begin proving Theorem~\ref{thm:check-D-D'} which allows to automatically verify conditions $\D_p(u_n)^*$ and $\D'_p(u_n)^*$ from decay of correlations against $L^1$ observables. The proof follows the same lines as the verification of earlier conditions of the same type (like $\D_p(u_n)$ and $\D'_p(u_n)$) as in \cite{FFT15} or similar conditions in \cite{FFT12, FFT13, AFV15}, under the same assumption. However, for completeness and because it is short we do it here.

\begin{proof}[Proof of Theorem~\ref{thm:check-D-D'}]
Recall that by assumption $\cv_\p(\phi,\psi,n)\leq\rho_n$, with $\sum_{n\geq1}\rho_n<\infty$. As mentioned earlier, condition $\D_p(u_n)^*$, as its predecessors, is designed to follow easily from decay of correlations (and it does not need to be against $L^1$). 
Take $\phi=\I_{R_{p,0}(u_n,x_1)}$, $\psi=\I_{\left(\cap_{j=2}^\varsigma \aa_{u_n}(I_j-t)\leq x_j \right)}$. 
By assumption, there exists $C'>0$ such that $\left\|\I_{R_{p,0}(u_n,x_1)}\right\|_{\mathcal C_1}\leq C'$ for all $n\in\N$ and $x_1\in\R_0^+$. Hence, we have that condition~$\D_p(u_n)^*$ holds with $\gamma(n,t)=\gamma(t):=C'\rho_t$ and by choosing a sequence $(t_n)_{n\in\N}$ such that $t_n=o(n)$ and $\lim_{n\to\infty}n\rho_{t_n}=0$.

We now turn to condition $\D'_p(u_n)^*$. Notice that $Q_{p,0}^0(u_n)=R_{p,0}(u_n,0)$, so taking $\phi=\I_{Q_{p,0}^0(u_n)}$ and $\psi=\I_{X_0>u_n}$ we easily get
\begin{align}
\label{eq:computation}
\p\left(Q_{p,0}^0(u_n)\cap f^{-j}(X_0>u_n)\right) &\leq \p(Q_{p,0}^0(u_n))\p(X_0>u_n)+\left\|\I_{Q_{p,0}^0(u_n)}\right\|_{\mathcal C_1}\p(X_0>u_n) \rho_j\nonumber\\
&\leq \p(X_0>u_n)\left(\p(Q_{p,0}^0(u_n))+C'\rho_j\right)
\end{align}
Recalling that $n\p(X_0>u_n)\to\tau\geq0$ and $p$ is such that \eqref{eq:q-def EVL} holds, then
\begin{align*}
n\sum_{j=p+1}^{\lfloor n/k_n\rfloor-1}&\p\left(Q_{p,0}^0(u_n)\cap f^{-j}(X_0>u_n)\right)=n\sum_{j=R(Q_{p,0}^0(u_n))}^{\lfloor n/k_n\rfloor-1}\p\left(Q_{p,0}^0(u_n)\cap f^{-j}(X_0>u_n)\right)\\
&\leq \frac{n^2}{k_n}\p(X_0>u_n)\p(Q_{p,0}^0(u_n))+n\p(X_0>u_n)C'\sum_{j=R(Q_{p,0}^0(u_n))}^\infty \rho_j\to 0 \quad \text{as $n\to\infty$.}
\end{align*}
\end{proof}

\begin{remark}
\label{rem:L1}  Note that in the proof of Theorem~\ref{thm:check-D-D'}, the fact that the decay of correlations holds against all $L^1$ observables was only used in the proof of $\D'_p(u_n)^*$. In fact, as mentioned earlier, by adapting the proof \cite[Proposition~3.1]{FFT13}, one can easily show that $\D_p(u_n)^*$ follows from decay of correlations of H\"older observables against $L^\infty$ ones.

\end{remark}

In the proof of Theorem~\ref{thm:check-D-D'} we use the fact that we can find $p$ such that \eqref{eq:q-def EVL} holds and consequently $R(Q_{p,0}^0(u_n))\to\infty$, as $n\to\infty$. If we take $q=\max\{n\in\N: f^n(\zeta_i)=\zeta_j,\;i,j=1,\ldots,k\}$ then under mild assumptions on the system we have that $R(Q_{q,0}^0(u_n))\to\infty$, as $n\to\infty$. For example, if the systems is continuous along the orbits of $\zeta_i$, $i=1,\ldots k$, then using a continuity argument and the Hartman-Grobman theorem (when a $\zeta_i$ is periodic) one can show the previous statement. (See \cite[Lemma~4.1 and  Lemma~5.1]{AFFR16}. We remark that one can prove the statement even in situations when the orbit of some $\zeta_i$ hits a discontinuity point of $f$ as it was studied in \cite[Section~3.3]{AFV15}. 

\subsection{Illustrative scenarios of possible application}
\label{subsec:single-maximum}
As  in \cite{AFFR16}, having multiple maximal point creates a large range of possibilities since the local behaviour of $\varphi$ and of the measure $\p$ at each point raises an enormous number of case studies. Our goal here is to illustrate our convergence theorem and compute the limit marked point process for some illustrative examples. Since it would be extremely difficult to cover in a systematic way all the possibilities we make some assumptions from this point to the end of this section intended to simplify the presentation but maintain, as much as possible, the key aspects of potential application.

\textbf{Assumption 1 -- Single global maximum}: There exists a single point $\zeta\in\X$ where $\varphi$ achieves its global maximum value. We allow $\varphi(\zeta)=+\infty$. 

\textbf{Assumption 2 -- Shape of the observable}: The observable $\varphi:\X\to\R\cup\{+\infty\}$ is of the form
\begin{equation}\label{eq:observable-form}
\varphi(x)=g(\dist(x,\zeta)),
\end{equation}
where $g:V\to W$ is a strictly decreasing homeomorphism in a neighbourhood $V$ of $0$ and has one of the following three types of behaviour:

\begin{enumerate}
\item [Type 1:] there exists some strictly positive function $q:W\to\R$ such that for all $y\in\R$
\begin{equation}
\label{eq:def-g1}
\lim_{s\to g(0)}\frac{g^{-1}(s+yq(s))}{g^{-1}(s)}=\e^{-y};
\end{equation}
\item [Type 2:] $g(0)=+\infty$ and there exists $\beta>0$ such that for all $y>0$
\begin{equation}
\label{eq:def-g2}
\lim_{s\to+\infty}\frac{g^{-1}(sy)}{g^{-1}(s)}=y^{-\beta};\end{equation}
\item [Type 3:] $g(0)=D<+\infty$ and there exists $\gamma>0$ such that for all $y>0$
\begin{equation}
\label{eq:def-g3}
\lim_{s\to 0}\frac{g^{-1}(D-sy)}{g^{-1}(D-s)}=y^\gamma.
\end{equation}
\end{enumerate}

Examples of each one of the three types are, respectively, as follows:
$g(x)=-\log x$ (in this case \eqref{eq:def-g1} is easily verified with $q\equiv1$), $g(x)=x^{-1/\alpha}$ for some $\alpha>0$ (condition \eqref{eq:def-g2} is verified with $\beta=\alpha$) and $g(x)=D-x^{1/\alpha}$ for some $D\in\R$ and $\alpha>0$ (condition \eqref{eq:def-g3} is verified with $\gamma=\alpha$).

\textbf{Assumption 3 -- Regularity of $\p$}: Let us now assume we are in the particular case where $\p$ is absolutely continuous with respect to the Lebesgue measure and  its Radon-Nikodym density is sufficiently regular so that for all $x\in\X$ we have
\begin{equation}
\label{eq:density}
\lim_{\eps\to 0}\frac{\p(B_\eps(x))}{\l(B_\eps(x))}=\frac{d\p}{d\l}(x).
\end{equation}

\begin{remark}
Note that if $f$ is one dimensional smooth map modelled by the full shift as in \cite[Section~7.1]{FFT15}
and the derivative is sufficiently regular then, as seen in \cite[Section~7.3]{FFT15}, the invariant density is fairly smooth and formula \eqref{eq:density} holds for all $x\in\X$.
\end{remark}

\begin{remark}
The different types of $g$ imply that the distribution of $X_0$ falls in the domain of attraction for maxima of the Gumbel, Fr\'echet and Weibull distributions, respectively.
\end{remark}

We recall that as shown in \cite{AFV15}, under decay of correlations against $L^1$ and the previous assumptions,  either we have clustering when $\zeta$ is a repelling periodic point or at every other non-periodic point $\zeta$ we have no clustering of exceedances and an EI equal to 1. Moreover, note that under the previous assumptions condition $(R1)$ is always satisfied and, in case $\zeta$ is a repelling periodic point of prime period $p$, then $(R2)$ is also satisfied with 
\begin{equation}
\label{eq:EI-formula}
\theta=1-\frac1{\det Df^p(\zeta)}.
\end{equation}
In particular the limit of $\theta_n$, defined in \eqref{def:thetan}, exists and equals such $\theta$.
\begin{remark}
If $\p$ is not absolutely continuous with respect to Lebesgue, we can use instead observables of the form $\varphi(x)=g\left(\mu_\phi\left(B_{\dist(x,\zeta)}(\zeta)\right)\right)$, as introduced in \cite{FFT11}, and the analysis we will carry out could be easily adjusted in order to obtain essentially the same results. In particular, when $\p$ is the more general equilibrium state associated to a potential $\psi$ then condition (R2) can be verified as in \cite[Lemma~3.1]{FFT12} and the EI is given by the formula $\theta=1-\e^{\psi(\zeta)+\ldots+\psi(f^{p-1}(\zeta))}$.
\end{remark}

As mentioned above, if $\zeta$ is not periodic
the condition $U\!LC_0(u_n)$ is trivially satisfied. If $\zeta$ is a periodic point of prime period $p$, since the above assumptions guarantee that (R2) is satisfied then  condition $U\!LC_p(u_n)$ can also be easily verified, as follows.

\begin{proof}[Proof of Proposition~\ref{prop:ULC}]
Since by \eqref{eq:delta-definition}
\begin{align*}
\delta_{p,s,u}(x)&=p\left(\sum_{\kappa=0}^{\lfloor s/p\rfloor}(\kappa+1)\p(U^{\kappa}_{p,0}(u,x))+\sum_{\kappa=\lfloor s/p\rfloor+1}^{\infty}(s/p+1)\p(U^{\kappa}_{p,0}(u,x))\right)\\
&\leq p\sum_{\kappa=0}^\infty(\kappa+1)\p(U^{\kappa}_{p,0}(u,x))\leq p\sum_{\kappa=0}^\infty(\kappa+1)\p(Q^{\kappa}_{p,0}(u))
\end{align*}
for all $x\in\R_0^+$ and $y\in\R^+$, we have
\begin{align*}
\int_0^{\infty}y\e^{-yx}\delta_{p,\lfloor n/k_n\rfloor,u_n}(x)dx &\leq p\sum_{\kappa=0}^\infty(\kappa+1)\p(Q^{\kappa}_{p,0}(u_n))\int_0^\infty y\e^{-yx}dx=p\sum_{\kappa=0}^\infty(\kappa+1)\p(Q^{\kappa}_{p,0}(u_n)).
\end{align*}
So, it is sufficient to check if
\[\limsup_{n\to\infty}{n}\sum_{\kappa=0}^\infty(\kappa+1)\p(Q^{\kappa}_{p,0}(u_n))<\infty\]

By (R2), there exists $0<\theta<1$ so that $\bigcap_{j=0}^i f^{-jp}(X_0>u)$ is a ball around $\zeta$ with
\[\p\left(\bigcap_{j=0}^\kappa f^{-jp}(\{X_0>u\})\right)\sim(1-\theta)^\kappa\p(X_0>u)\]
for all $u$ sufficiently large. So, we have
\[\p(U_p^{(\kappa)}(u_n))\sim(1-\theta)^\kappa\p(U(u_n))\]
\[\p(Q_{p,0}^\kappa(u_n))=\p(U_p^{(\kappa)}(u_n))-\p(U_p^{(\kappa+1)}(u_n))\sim\theta(1-\theta)^\kappa\p(U(u_n))\]
\[\sum_{\kappa=0}^\infty(\kappa+1)\p(Q^{\kappa}_{p,0}(u_n))\sim \sum_{\kappa=0}^\infty(\kappa+1)\theta(1-\theta)^\kappa\p(U(u_n))=\frac{1}{\theta}\p(U(u_n))\]

Since by \eqref{eq:un} we have $\lim_{n\to\infty} n\p(U(u_n))=\tau$, then we conclude that condition $U\!LC_p(u_n)$ is always verified when $\zeta\in\X$ is a repelling periodic point of prime period $p\in\N$ satisfying (R2).
\end{proof}

\subsection{Convergence of the REPP}

When the mark function $m_u$ defined in \eqref{eq:mark-type} counts the number of exceedances then our atomic  random measure $A_n$ is actually a REPP as the one considered in \cite{FFT13}, namely, $
A_n(J)=\sum_{j\in v_nJ\cap \N_0} \I_{X_j>u}.$ We realise here that if we have a system that admits a first return induced map on a base $B$ with decay of correlations against $L^1$ and $\zeta\in B$  is the only global maximum of $\varphi$, which is a periodic point satisfying (R2), which is the case if Assumptions 1--3 hold, then we recover the main result in \cite{FFT13}, which states that $A_n$ converges in distribution to a compound Poisson process of intensity $\theta$ and geometric multiplicity distribution.

To see this, we note the following:
\begin{align*}
&U_p^{(\kappa)}(u)=U(u)\cap\bigcap_{i=1}^{\kappa}\left\{w_{U(u)}^i=p\right\}=\{X_0>u,X_p>u,\ldots,X_{\kappa p}>u\}\\
&Q_{p,i}^{\kappa}=\{X_i>u,X_{i+p}>u,\ldots,X_{i+\kappa p}>u,X_{i+(\kappa+1)p}\leq u\}\\
&m_u\left(\{X_j\}_{0\leq j\leq r_{U(u)}^\kappa}\right)>x
\Leftrightarrow \kappa\geq \lfloor x\rfloor \qquad U_{p,0}^\kappa(u,x)=\begin{cases}
Q_{p,0}^\kappa(u) & \text{if $\kappa\geq \lfloor x\rfloor$}\\
\emptyset & \text{if $\kappa<\lfloor x\rfloor$}\\
\end{cases}\\
&U_{p,0}(u,x)=\bigcup_{\kappa=\lfloor x\rfloor}^{\infty}Q_{p,0}^\kappa(u)\cup\{\zeta\}=U_p^{(\lfloor x\rfloor)}(u)\\ &R_{p,0}(u,x)=U_p^{(\lfloor x\rfloor)}(u)\cap\{r_{U_p^{(\lfloor x\rfloor)}(u)}>p\}=Q_{p,0}^{\lfloor x\rfloor}(u)
\end{align*}
Moreover, we have $\p(U_p^{(\kappa)}(u_n))\sim(1-\theta)^\kappa\p(U(u_n))$ and $\p(Q_{p,0}^\kappa(u_n))\sim\theta(1-\theta)^\kappa\p(U(u))$. The result now follows from observing that
\begin{align*}
\lim_{n\to\infty}\frac{\p(R_{p,0}(u_n,x))}{\p(U(u_n))}&=\lim_{n\to\infty}\frac{\p(Q_{p,0}^{\lfloor x\rfloor}(u_n))}{\p(U(u_n))}=\lim_{n\to\infty}\frac{\theta(1-\theta)^{\lfloor x\rfloor}\p(U(u_n))}{\p(U(u_n))}\\
&=\theta(1-\theta)^{\lfloor x\rfloor}=\theta(1-\pi(x))
\end{align*}
where $\pi(x)=1-(1-\theta)^{\lfloor x\rfloor}$ is the cumulative distribution function of a geometric distribution of parameter $\theta$, that is, $\pi(x)=\sum_{\kappa\leq x,\kappa\in\N}\theta(1-\theta)^{\kappa-1}$.
\begin{remark}
If the point $\zeta$ is not periodic and a dichotomy holds, as in \cite{AFV15}, for the first-return induced system (which we are assuming to have decay of correlations against $L^1$), then condition $\D'_0(u_n)^*$ holds and the REPP is easily seen to converge to a standard Poisson process (with intensity 1).
\end{remark}

\subsection{Computation of the limit of EOT and POT MREPP}
\label{subsec:POT}
When the mark function $m_u$ defined in \eqref{eq:mark-type} weighs the maximum excess within a cluster then our atomic  random measure $A_n$ is a POT MREPP. When there is no clustering then $A_n$ is an EOT MREPP and, as observed above, the POT and AOT MREPP coincide and provide information about the sum of all observed excesses. 

The result below gives that for uniformly expanding and certain non-uniformly expanding dynamical systems the POT MREPP, in the case of presence of clustering, and the EOT MREPP, in the case of its absence, both converge to a compound Poisson process with intensity given by the EI and whose multiplicity distribution is a Generalised Pareto Distribution (GPD), whose type depends on the type of $g$ chosen in Assumption 2.  

\begin{theorem}
\label{thm:EOT-POT}
Let $f:\X\to\X$ be a system admitting a first return induced map $F_B:B\to B$, with $B\subset \X$ and such that $F_B$ has summable decay of correlations against $L^1$ observables \ie for all $\phi\in\mathcal C_1$ and $\psi\in L^1$, then $\cv(\phi,\psi,n)\leq \rho_n$, with $\sum_{n\geq 1}\rho_n<\infty$. Assume that for every $\zeta$, for all balls $B_\eps(\zeta)$ and annuli $B_{\eps_1}(\zeta)\setminus B_{\eps_2}(\zeta)$, with $0<\eps,\eps_1<\eps_2$, then $\I_{ B_\eps(\zeta)}\in\mathcal C_1$, $\I_{B_{\eps_1}(\zeta)\setminus B_{\eps_2}(\zeta)}\in\mathcal C_1$ and their norms are uniformly bounded above. 

Let $X_0,X_1,\ldots$ be given by \eqref{eq:def-stat-stoch-proc-DS} and $(u_n)_{n\in\N}$ be a sequence satisfying \eqref{eq:un}. Assume that $\varphi$ and $\p$ are such that Assumptions 1--3 hold, where $\zeta\in B$. Then,
\begin{itemize}

\item if $\zeta$ is a periodic repelling point of prime period $p$, the POT MREPP $a_nA_n$ converges in distribution to a compound Poisson process with intensity $\theta$ given by formula \eqref{eq:EI-formula} and multiplicity distribution 
\begin{equation}
\label{eq:POT-multiplicity}
\pi(x)=\begin{cases}
1-\e^{-x}, \text{when $g$ is of type 1 and $a_n=(q(u_n))^{-1}$}\\
1-(1+x)^{-\beta}, \text{when $g$ is of type 2 and $a_n=u_n^{-1}$}\\
1-(1-x)^{\gamma}, \text{when $g$ is of type 3 and $a_n=(D-u_n)^{-1}$}
\end{cases}
\end{equation}

\item If $\zeta$ is not periodic and $f$ is continuous on the points of its orbit then the EOT MREPP $a_nA_n$ converges in distribution to a compound Poisson process with intensity 1 and multiplicity distribution given by \eqref{eq:POT-multiplicity}.
\end{itemize}

\end{theorem}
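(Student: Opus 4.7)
The plan is to deduce the theorem from Theorem~\ref{thm:convergence} applied to the first-return induced system $F_B:B\to B$, and then transfer the conclusion to the original system via Theorem~\ref{thm:mpp_ret_orig}. The hypotheses of the transfer are easy to check: $U(u_n)\subset B$ for $n$ large since $U(u_n)$ is a shrinking neighborhood of $\zeta\in B$, and the limit compound Poisson process depends continuously on the interval boundaries since its multiplicities are absolutely continuous.

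Once reduced to the induced system, I would verify the hypotheses of Theorem~\ref{thm:convergence}. The mixing and short-range conditions $\D_p(u_n)^*$ and $\D'_p(u_n)^*$ follow from Theorem~\ref{thm:check-D-D'}, provided the indicators $\I_{R_{p,0}(u_n,x)}$ have uniformly bounded $\mathcal C_1$-norm (this will be a byproduct of the structural identification carried out below, combined with the standing hypothesis that indicators of balls and annuli centered at $\zeta$ have uniformly bounded $\mathcal C_1$-norm). Condition $U\!LC_p(u_n)$ is trivial in the non-periodic case ($p=0$) and is given by Proposition~\ref{prop:ULC} in the repelling periodic case, since Assumptions 1--3 imply (R2). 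Existence of $\theta_n\to\theta$ is already known under these hypotheses \cite{AFV15}: $\theta=1$ in the non-periodic case, and $\theta=1-|\det Df^p(\zeta)|^{-1}$ from \eqref{eq:EI-formula} in the repelling periodic case.

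The main work is to verify \eqref{eq:multiplicity} with the normalization $a_n$ and distribution $\pi$ prescribed in the statement. The key structural observation is that in the POT periodic case, if $y\in Q_{p,0}^\kappa(u)$, then by (R2) the iterates $f^{jp}(y)$ for $j=0,\ldots,\kappa$ lie in nested balls around $\zeta$ with strictly increasing distance from $\zeta$; since $g$ is strictly decreasing this forces $X_0(y)>X_p(y)>\ldots>X_{\kappa p}(y)$, so the POT cluster mark equals $X_0-u$. Therefore for $x\geq 0$,
\[
U_{p,0}^\kappa(u,x)=Q_{p,0}^\kappa(u)\cap\{X_0>u+x\},
\]
and summing over $\kappa\geq 0$ together with $U_p^{(\infty)}(u)=\{\zeta\}$ gives $U_{p,0}(u,x)=U(u+x)$ modulo a null set. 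Consequently
\[
R_{p,0}(u,x)=U(u+x)\cap\{r_{U(u+x)}>p\}=Q_{p,0}^0(u+x).
\]
The non-periodic case has $p=0$ and by \eqref{eq:Uk-p=0} one gets $R_{0,0}(u,x)=U(u+x)$ directly. Since $R_{p,0}(u_n,x)$ is then the difference of a ball centered at $\zeta$ with a finite union of pullbacks of such a ball, the uniform $\mathcal C_1$-norm bound required above is supplied by the standing hypothesis.

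With this identification in hand, I would split
\[
\frac{\p(R_{p,0}(u_n,x/a_n))}{\p(U(u_n))}=\frac{\p(Q_{p,0}^0(u_n+x/a_n))}{\p(U(u_n+x/a_n))}\cdot\frac{\p(U(u_n+x/a_n))}{\p(U(u_n))},
\]
and pass to the limit. The first factor tends to $\theta$ by existence of the EI along the threshold sequence $u_n+x/a_n\to u_F$. For the second factor, Assumption~3 gives $\p(B_\eps(\zeta))\sim (d\p/d\l)(\zeta)\,\l(B_\eps(\zeta))$ as $\eps\to 0$, so the ratio reduces to a power of $g^{-1}(u_n+x/a_n)/g^{-1}(u_n)$. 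Applying \eqref{eq:def-g1}, \eqref{eq:def-g2} or \eqref{eq:def-g3} with the respective choice of $a_n$ from \eqref{eq:POT-multiplicity} yields $1-\pi(x)$ in each of the three cases. The hardest step I expect is precisely the identification $R_{p,0}(u,x)=Q_{p,0}^0(u+x)$, which is what makes the POT cluster mark collapse to the single largest (and, by monotonicity, first) excess, and which forces the limit multiplicity to be a genuine GPD rather than a convolution of a GPD with the cluster-size distribution.
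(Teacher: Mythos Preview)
Your proposal is correct and follows essentially the same route as the paper: reduce to the induced map via Theorem~\ref{thm:mpp_ret_orig}, verify $\D_p(u_n)^*$, $\D'_p(u_n)^*$, $U\!LC_p(u_n)$ via Theorem~\ref{thm:check-D-D'} and Proposition~\ref{prop:ULC}, identify $U_{p,0}(u,x)=\{X_0>u+x\}$ so that $R_{p,0}(u,x)=Q_{p,0}^0(u+x)$, and then reduce the multiplicity limit to the tail ratio $g^{-1}(u_n+x/a_n)/g^{-1}(u_n)$, which the three type conditions handle directly. Your factorization of \eqref{eq:multiplicity} into an EI factor and a survival-function ratio is a clean alternative to the paper's inverse-function computation with the auxiliary function $h$, but the content is the same.

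Two small points you gloss over that the paper makes explicit. First, Theorem~\ref{thm:check-D-D'} also requires \eqref{eq:q-def EVL}, \ie $R(Q_{p,0}^0(u_n))\to\infty$; the paper supplies this via a continuity argument (non-periodic case, as in \cite[Lemma~3.1]{AFV15}) and via Hartman--Grobman (periodic case, as in \cite{FFT13}). Second, the standing $\mathcal C_1$ hypothesis only covers indicators of balls and annuli centered at $\zeta$, so ``ball minus a finite union of pullbacks of balls'' is not enough on its own; what makes the argument go through is that under (R2) the intersection $U(u+x)\cap f^{-p}(U(u+x))$ is itself a ball around $\zeta$, so $R_{p,0}(u_n,x)$ is genuinely an annulus $B_{\eps_1}(\zeta)\setminus B_{\eps_2}(\zeta)$ (and a ball in the non-periodic case), which is exactly what the hypothesis covers.
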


\begin{proof}
By Theorem~\ref{thm:mpp_ret_orig} we only need to prove the result for $F_B$ since then if follows for $f$. 
First we consider the case in which $\zeta$ is not periodic ($p=0$). By Assumptions 1, 2 then $R_{0,0}(u_n,x)=U_{0,0}^0(u_n,x)=B_{\eps}(\zeta)$, for some $\eps>0$ and consequently  $\I_{R_{0,0}(u_n,x)}\in\mathcal C_1$ and $\|\I_{R_{0,0}(u_n,x)}\|_{\mathcal C_1}\leq C$ for every $x>0$ and $n\in \N$. Recalling that in this case $Q_{0,0}^0(u_n)=U(u_n)$, as in \cite[Lemma~3.1]{AFV15}, it follows using a continuity argument that $\lim_{n\to\infty}R(U(u_n))=\infty$. Then all hypothesis of Theorem~\ref{thm:check-D-D'} are satisfied and in conclusion conditions $\D_0(u_n)^*$ and $\D'_0(u_n)^*$ hold. Moreover, as observed earlier, condition $U\!LC_0(u_n)$ is trivially satisfied.

Now we consider the case where $\zeta$ is a repelling periodic point of prime period $p$. By Assumptions 1, 2 then $R_{p,0}(u_n,x)=B_{\eps_1}(\zeta)\setminus B_{\eps_2}(\zeta)$ for some $\eps_1,\eps_2>0$ and consequently  $\I_{R_{p,0}(u_n,x)}\in\mathcal C_1$ and $\|\I_{R_{p,0}(u_n,x)}\|_{\mathcal C_1}\leq C$ for every $x>0$ and $n\in \N$. Moreover, as in the proof of Theorem~2 of \cite{FFT13}, using the Hartman-Grobman theorem, one can easily check  that $\lim_{n\to\infty}R(Q_{p,0}^0(u_n))=\infty$. Then all hypothesis of Theorem~\ref{thm:check-D-D'} are satisfied and in conclusion conditions $\D_p(u_n)^*$ and $\D'_p(u_n)^*$ hold.
By Assumptions 1--3 and the fact that $\zeta$ is a repelling periodic point then (R2) holds and by Proposition~\ref{prop:ULC} so does $U\!LC_p(u_n)$. Hence, the statements of the theorem follow as soon as we show that \eqref{eq:multiplicity} holds with $\pi(x)$ as in \eqref{eq:POT-multiplicity}. For $u$ sufficiently close to $g(0)$, we have
\[U_{p,0}(u,x)=\{X_0>u+x\}=B_{\frac{1}{2}g^{-1}(u+x)}(\zeta)\]
\[R_{p,0}(u,x)=U_{p,0}(u,x)\setminus U_{p,p}(u,x)=\{X_0>u+x\}\setminus\bigcap_{j=0}^1 f^{-jp}(\{X_0>u+x\})\]
By (R2), $\{X_0>u+x\}$ and $\bigcap_{j=0}^1 f^{-jp}(\{X_0>u+x\})$ are both intervals, and
\[\p(R_{p,0}(u,x))=\p(X_0>u+x)-(1-\theta)\p(X_0>u+x)=\theta\p(X_0>u+x)\]

Let $(u_n)_n$ be a normalizing sequence of levels satisfying \eqref{eq:un} such that $\lim_{n\to\infty}u_n=g(0)$. Given the assumptions \eqref{eq:density} and (R1), of regularity of $\p$ and $U(u_n)=\{X_0>u_n\}$ being a ball centred at $\zeta$, respectively, we have
\[\p(X_0>u_n)\sim\l(X_0>u_n)\frac{d\p}{d\l}(\zeta)=g^{-1}(u_n)\frac{d\p}{d\l}(\zeta)\]
\[\p(R_{p,0}(u_n,x))=\theta\p(X_0>u_n+x)\sim\theta\l(X_0>u_n+x)\frac{d\p}{d\l}(\zeta)=\theta g^{-1}(u_n+x)\frac{d\p}{d\l}(\zeta)\]

If there exists some strictly positive function $q:W\to\R$ and some strictly monotone homeomorphism $h$ such that
\[\lim_{u\to g(0)}\frac{g(g^{-1}(u)h(x))-u}{q(u)}=x\]
then, for $a_n=1/q(u_n)$,
\begin{align*}
\lim_{n\to\infty}\frac{\p(R_{p,0}(u_n,x/a_n))}{\p(X_0>u_n)}&=\theta\lim_{n\to\infty}\frac{g^{-1}(u_n+q(u_n)x)}{g^{-1}(u_n)}=\theta\lim_{n\to\infty}\lim_{u\to g(0)}\frac{g^{-1}\left(u_n+q(u_n)\frac{g(g^{-1}(u)h(x))-u}{q(u)}\right)}{g^{-1}(u_n)}
\end{align*}

In particular, for $u=u_n$ we get
\begin{align*}
\lim_{n\to\infty}\frac{\p(R_{p,0}(u_n,x/a_n))}{\p(X_0>u_n)}&=\theta\lim_{n\to\infty}\frac{g^{-1}\left(u_n+q(u_n)\frac{g(g^{-1}(u_n)h(x))-u_n}{q(u_n)}\right)}{g^{-1}(u_n)}=\theta h(x)
\end{align*}

and the probability distribution is $\pi(x)=1-h(x)$. We will analyse each type of behaviour separately.

Type 1: there exists some strictly positive function $p:W\to\R$ such that for all $y\in\R$
\[\lim_{s\to g(0)}\frac{g^{-1}(s+y q(s))}{g^{-1}(s)}=\e^{-y}\]

Then, we have
\begin{align*}
\lim_{u\to g(0)}\frac{g^{-1}(u-\log(x)q(u))}{g^{-1}(u)}&=x\\
\lim_{u\to g(0)}\frac{g(g^{-1}(u)x)-u}{q(u)}&=\lim_{u\to g(0)}\frac{g\left(g^{-1}(u)\frac{g^{-1}(u-\log(x)q(u))}{g^{-1}(u)}\right)-u}{q(u)}=-\log(x)
\end{align*}

Let $h(x)=\e^{-x}$, so that $h^{-1}(x)=-\log(x)$. Then, $a_n A_n$ converges in distribution to a compound Poisson process $A$ with intensity $\theta$ and multiplicity d.f. $\pi(x)=1-\e^{-x}$.

Type 2: $g(0)=+\infty$ and there exists $\beta>0$ such that for all $y>0$
\[\lim_{s\to+\infty}\frac{g^{-1}(sy)}{g^{-1}(s)}=y^{-\beta}\]

Then, for $q(u)=u$ we have
\begin{align*}
\lim_{u\to\infty}\frac{g^{-1}(u x^{-1/\beta})}{g^{-1}(u)}&=x\\
\lim_{u\to\infty}\frac{g(g^{-1}(u)x)-u}{q(u)}&=\lim_{u\to\infty}\frac{g\left(g^{-1}(u)\frac{g^{-1}(u x^{-1/\beta})}{g^{-1}(u)}\right)}{u}-1=x^{-1/\beta}-1
\end{align*}

Let $h(x)=(1+x)^{-\beta}$, so that $h^{-1}(x)=x^{-1/\beta}-1$. Then, $a_n A_n$ converges in distribution to a compound Poisson process $A$ with intensity $\theta$ and multiplicity d.f. $\pi=1-(1+x)^{-\beta}$.

Type 3: $g(0)=D<+\infty$ and there exists $\gamma>0$ such that for all $y>0$
\[\lim_{s\to 0}\frac{g^{-1}(D-sy)}{g^{-1}(D-s)}=y^\gamma\]

Then, for $q(u)=D-u$ we have
\begin{align*}
\lim_{u\to D}\frac{g^{-1}(D-(D-u)x^{1/\gamma})}{g^{-1}(u)}&=x\\
\lim_{u\to D}\frac{g(g^{-1}(u)x)-u}{q(u)}&=\lim_{u\to D}\frac{g\left(g^{-1}(u)\frac{g^{-1}(D-(D-u)x^{1/\gamma})}{g^{-1}(u)}\right)-u}{D-u}=1-x^{1/\gamma}
\end{align*}

Let $h(x)=(1-x)^\gamma$, so that $h^{-1}(x)=1-x^{1/\gamma}$. Then, $a_n A_n$ converges in distribution to a compound Poisson process $A$ with intensity $\theta$ and multiplicity d.f. $\pi=1-(1-x)^\gamma$. \end{proof}

\subsection{Computation of the limit of AOT MREPP for specific systems}
\label{subsec:AOT}In the case of AOT MREPP is technically much harder to compute the multiplicity distribution of the liming compound Poisson process. In order to write an explicit formula for it we need to assume a specific backward contraction in a neighbourhood of the repelling periodic point, rather than an approximate rate like in the previous cases.  

\begin{theorem}
\label{thm:AOT}
Let $f:\X\to\X$ be a system admitting a first return induced map $F_B:B\to B$, with $B\subset \X$ and such that $F_B$ has summable decay of correlations against $L^1$ observables \ie for all $\phi\in\mathcal C_1$ and $\psi\in L^1$, then $\cv(\phi,\psi,n)\leq \rho_n$, with $\sum_{n\geq 1}\rho_n<\infty$. Assume that for every $\zeta$, for all balls $B_\eps(\zeta)$ and annuli $B_{\eps_1}(\zeta)\setminus B_{\eps_2}(\zeta)$, with $0<\eps,\eps_1<\eps_2$, then $\I_{ B_\eps(\zeta)}\in\mathcal C_1$, $\I_{B_{\eps_1}(\zeta)\setminus B_{\eps_2}(\zeta)}\in\mathcal C_1$ and their norms are uniformly bounded above. 

Let $X_0,X_1,\ldots$ be given by \eqref{eq:def-stat-stoch-proc-DS} and $(u_n)_{n\in\N}$ be a sequence satisfying \eqref{eq:un}. Assume that $\varphi$ and $\p$ are such that Assumptions 1--3 hold, where $\zeta\in B$. Additionally, suppose that
\begin{itemize}
\item $\zeta$ is a periodic repelling point of period $p$
\item for some $M>1$,
\[\dist(f^p(x),\zeta)=M\dist(x,\zeta)\]
for all $x$ in a neighbourhood of $\zeta$. One example of such a dynamical system is, for instance, $f:t\mapsto mt \mod 1$, with $m\in \{2, 3, \ldots\}$ (in this case $M=m^p$)
\item There exists some strictly positive function $q:W\to\R$ and some strictly monotone homeomorphism $h_k$ such that
\begin{equation}
\label{eq:def-h}
\lim_{u\to g(0)}\frac{g_{\kappa,u}(g^{-1}(u)h_\kappa(x))}{q(u)}=x, \, \forall\kappa\in\N_0
\end{equation}
where $g_{\kappa,u}(x):=\sum_{i=0}^\kappa(g(M^ix)-u)$. As we will see, this holds when $g$ has the form given in Assumption 2.
\end{itemize}

Then, for $a_n=q(u_n)^{-1}$ the AOT MREPP $a_n A_n$ converges in distribution to a compound Poisson process with intensity $\theta=1-1/M$ and multiplicity d.f. $\pi$ given by
\[\pi(x)=1-\lim_{n\to\infty}h_{\kappa(u_n,q(u_n)x)}(x)\]
where $\kappa=\kappa(u,x)$ is the only integer such that $x\in\left[g_{\kappa,u}\left(\frac{g^{-1}(u)}{M^\kappa}\right),g_{\kappa,u}\left(\frac{g^{-1}(u)}{M^{\kappa+1}}\right)\right)$.

\end{theorem}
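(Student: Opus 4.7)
The plan is to fit the problem into the framework of Theorem~\ref{thm:convergence} applied to the first return induced map $F_B$, and then transfer the conclusion to $f$ via Theorem~\ref{thm:mpp_ret_orig}. Following the proof of Theorem~\ref{thm:EOT-POT} almost verbatim, Assumptions 1--3 together with the repelling periodic structure guarantee that for every $n\in\N$ and every $x\ge 0$ the set $R_{p,0}(u_n,x)$ is a ball or an annulus centred at $\zeta$ (this will be made explicit in Paragraph~3 below), so $\I_{R_{p,0}(u_n,x)}\in\mathcal C_1$ with uniformly bounded norm, and Theorem~\ref{thm:check-D-D'} yields $\D_p(u_n)^*$ and $\D'_p(u_n)^*$. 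The exact expansion $\dist(f^p(x),\zeta)=M\dist(x,\zeta)$ forces (R2) with $1-\theta=1/M$, so Proposition~\ref{prop:ULC} gives $U\!LC_p(u_n)$, and in particular O'Brien's formula produces the EI $\theta=1-1/M$ (which also agrees with \eqref{eq:EI-formula} since $\det Df^p(\zeta)=M$).

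The key new computation is the identification of $R_{p,0}(u_n,x)$ for the AOT mark. Because $\zeta$ has prime period $p$, a point $y\in U(u)$ returns to $U(u)$ exactly at times $p,2p,\ldots$ as long as the iterates stay close to $\zeta$, and the exact expansion gives $U_p^{(\kappa)}(u)=B_{g^{-1}(u)/M^\kappa}(\zeta)$, hence $Q_{p,0}^\kappa(u)=B_{g^{-1}(u)/M^\kappa}(\zeta)\setminus B_{g^{-1}(u)/M^{\kappa+1}}(\zeta)$. For $y$ with $\dist(y,\zeta)=r$ one has $X_{ip}=g(M^i r)$, so the accumulated excess up to the $\kappa$-th return equals $g_{\kappa,u}(r):=\sum_{i=0}^\kappa(g(M^ir)-u)$. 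Since $g_{\kappa,u}$ is strictly decreasing in $r$, the condition $m_{u}(\{X_j\}_{0\le j\le r_{U(u)}^\kappa})>x$ translates to $r<g_{\kappa,u}^{-1}(x)$. Combining this with the annular decomposition, $U_{p,0}(u,x)$ is a single ball $B_{s(u,x)}(\zeta)$ with $s(u,x):=g_{\kappa(u,x),u}^{-1}(x)$ where $\kappa(u,x)$ is the unique integer defined in the statement, and $R_{p,0}(u,x)=B_{s(u,x)}(\zeta)\setminus B_{s(u,x)/M}(\zeta)$ since points of $B_{s(u,x)/M}(\zeta)$ return to $U_{p,0}(u,x)$ after exactly $p$ iterates while the prime period prevents returns within $p$ steps.

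Verifying \eqref{eq:multiplicity} is now a direct computation. By the regularity assumption \eqref{eq:density} together with Assumption~1,
\[
\frac{\p(R_{p,0}(u,x))}{\p(U(u))}\longrightarrow\theta\,\frac{s(u,x)}{g^{-1}(u)},\qquad u\to g(0),
\]
up to the volume factor coming from the ambient dimension, which cancels in the quotient. Replacing $x$ by $x/a_n=q(u_n)x$ and invoking the hypothesis \eqref{eq:def-h} shows that $g_{\kappa,u_n}(g^{-1}(u_n)h_\kappa(x))/q(u_n)\to x$, so $s(u_n,q(u_n)x)/g^{-1}(u_n)\to h_{\kappa(u_n,q(u_n)x)}(x)$. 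Hence
\[
\lim_{n\to\infty}\frac{\p(R_{p,0}(u_n,x/a_n))}{\p(U(u_n))}=\theta\lim_{n\to\infty}h_{\kappa(u_n,q(u_n)x)}(x),
\]
which is exactly \eqref{eq:multiplicity} with $\pi(x)=1-\lim_{n\to\infty}h_{\kappa(u_n,q(u_n)x)}(x)$. Theorem~\ref{thm:convergence} then delivers the announced compound Poisson limit.

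The main obstacle I anticipate is the identification of $\kappa(u,x)$ and the verification that \eqref{eq:def-h} is satisfied for each of the three types of $g$ in Assumption~2: because $g_{\kappa,u}(r)$ involves all the rescaled arguments $M^i r$, one has to control the asymptotics of $g^{-1}$ and $g$ at every scale simultaneously, using the regular-variation conditions \eqref{eq:def-g1}--\eqref{eq:def-g3}. For Type~1 this amounts to summing $\log$-corrections to a geometric series; for Types~2 and~3 one uses regular variation of index $-\beta$ and $\gamma$ respectively to rewrite $g(M^i r)-u$ in terms of $g(r)-u$ and extract the scaling $q(u)$. A secondary difficulty is that $\kappa(u,q(u)x)$ depends on $u$, so one must check (for each type) that $h_{\kappa(u_n,q(u_n)x)}(x)$ has a well-defined limit along the normalising sequence. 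Modulo these asymptotic estimates the conclusion follows directly by combining the reduction to $F_B$, the verification of the abstract conditions, and the explicit probability computation above.
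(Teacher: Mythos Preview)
Your proposal is correct and follows essentially the same route as the paper: reduce to the induced map via Theorem~\ref{thm:mpp_ret_orig}, verify $\D_p(u_n)^*$, $\D'_p(u_n)^*$ and $U\!LC_p(u_n)$ through Theorem~\ref{thm:check-D-D'} and Proposition~\ref{prop:ULC}, and then identify $R_{p,0}(u,x)$ explicitly as an annulus in order to check \eqref{eq:multiplicity}. The one noteworthy difference is in the computation of $R_{p,0}(u,x)$: the paper decomposes $U_{p,0}(u,x)\cap U_{p,p}(u,x)$ into its $\kappa$-components and reassembles them, whereas you use directly that $U_{p,0}(u,x)=B_{s(u,x)}(\zeta)$ is a ball together with the exact expansion $\dist(f^p(\cdot),\zeta)=M\dist(\cdot,\zeta)$ and the prime period to read off $\{r_{U_{p,0}(u,x)}\le p\}\cap U_{p,0}(u,x)=B_{s(u,x)/M}(\zeta)$; this is a legitimate and slightly cleaner shortcut. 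Your anticipated ``main obstacle'' (the type-by-type verification of \eqref{eq:def-h} and the stabilisation of $\kappa(u_n,q(u_n)x)$) is exactly what the paper carries out explicitly for Types~1--3 inside its proof, so you have correctly located where the remaining work lies.
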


\begin{remark}

In particular, when $g$ is one of the three examples given for each type, the multiplicity d.f. can be computed as shown in the following table:

\begin{tabular}{c|p{11cm}}
 Examples of $g(x)$ & Respective distribution $\pi(x)$ \\
\hline
\rule{0pt}{5ex}
 $-\log(x)$ & $1-(\sqrt{M})^{-\lfloor\frac{\sqrt{1+8x/\log M}-1}{2}\rfloor}\e^{-\frac{x}{\lfloor\frac{\sqrt{1+8x/\log M}-1}{2}\rfloor+1}}$ \\
\rule{0pt}{6ex}
 $x^{-1/\alpha}$, $\alpha>0$ & $1-\left(\frac{1-M^{-1/\alpha}}{1-M^{-(\kappa(x)+1)/\alpha}}\right)^{-\alpha}(\kappa(x)+1+x)^{-\alpha}$
where $\kappa=\kappa(x)$ is the only integer such that $\frac{M^{\kappa/\alpha}-M^{-1/\alpha}}{1-M^{-1/\alpha}}\leq \kappa+1+x<\frac{M^{(\kappa+1)/\alpha}-1}{1-M^{-1/\alpha}}$ \\
\rule{0pt}{6ex}
 $D-x^{1/\alpha}$, $D\in\R$, $\alpha>0$ & $1-\left(\frac{1-M^{1/\alpha}}{1-M^{(\kappa(x)+1)/\alpha}}\right)^\alpha(\kappa(x)+1-x)^\alpha$ where $\kappa=\kappa(x)$ is the only integer such that $\frac{1-M^{-(\kappa+1)/\alpha}}{M^{1/\alpha}-1}<\kappa+1-x\leq\frac{M^{1/\alpha}-M^{-\kappa/\alpha}}{M^{1/\alpha}-1}$\\
\end{tabular}
\end{remark}

\begin{proof}
By Theorem~\ref{thm:mpp_ret_orig} we only need to prove the result for $F_B$ since then if follows for $f$. If $R_{p,0}(u_n,x)=B_{\eps_1}(\zeta)\setminus B_{\eps_2}(\zeta)$ for some $\eps_1,\eps_2>0$ then $\I_{R_{p,0}(u_n,x)}\in\mathcal C_1$ and $\|\I_{R_{p,0}(u_n,x)}\|_{\mathcal C_1}\leq C$ for every $x>0$ and $n\in \N$. Moreover, as in the proof of Theorem~2 of \cite{FFT13}, using the Hartman-Grobman theorem, one can easily check  that $\lim_{n\to\infty}R(Q_{p,0}^0(u_n))=\infty$. Then all hypothesis of Theorem~\ref{thm:check-D-D'} are satisfied and in conclusion conditions $\D_p(u_n)^*$ and $\D'_p(u_n)^*$ hold.
By Assumptions 1--3 and the fact that $\zeta$ is a repelling periodic point then (R2) holds and by Proposition~\ref{prop:ULC} so does $U\!LC_p(u_n)$. 

Hence, the statements of the theorem follow as soon as we show that $R_{p,0}(u_n,x)=B_{\eps_1}(\zeta)\setminus B_{\eps_2}(\zeta)$ for some $\eps_1,\eps_2>0$ and that \eqref{eq:multiplicity} holds with $\pi(x)$ as in \eqref{eq:POT-multiplicity}.

For $u\in(0,g(0))$, let
\[g_{\kappa,u}(x):=\sum_{i=0}^\kappa(g(M^ix)-u) \qquad b_\kappa(u):=g_{\kappa,u}\left(\frac{g^{-1}(u)}{M^\kappa}\right)\]

For $j,\kappa\in\N_0$, $t$ sufficiently close to $\zeta$ and $u$ sufficiently close to $g(0)$, we have
\[X_{jp}(t)>u\Leftrightarrow g(2\dist(f^{jp}(t),\zeta))>u\Leftrightarrow g(2 M^j\dist(t,\zeta))>u\Leftrightarrow t\in B_{\frac{g^{-1}(u)}{2 M^j}}(\zeta)\]
\[m_u(\{X_0,X_p,\ldots,X_{\kappa p}\})(t)>x\Leftrightarrow g_{\kappa,u}(2\dist(t,\zeta))>x\Leftrightarrow t\in B_{\frac{1}{2}g_{\kappa,u}^{-1}(x)}(\zeta)\]

Notice that $(b_\kappa(u))_{\kappa\in\N_0}$ is an increasing sequence for any $u\in[0,g(0))$ since $g\left(\frac{g^{-1}(u)}{M^i}\right)>u$ for $i>0$. Moreover, $b_0(u)=0$ and $b_{\kappa+1}(u)=g_{\kappa+1,u}\left(\frac{g^{-1}(u)}{M^{\kappa+1}}\right)=g_{\kappa,u}\left(\frac{g^{-1}(u)}{M^{\kappa+1}}\right)$.

Then,
\[x\geq b_{\kappa+1}(u)\Leftrightarrow g_{\kappa,u}^{-1}(x)\leq\frac{g^{-1}(u)}{M^{\kappa+1}}\Leftrightarrow U^\kappa_{p,0}(u,x)=\emptyset\]

\[x\leq b_\kappa(u)\Leftrightarrow g_{\kappa,u}^{-1}(x)\geq\frac{g^{-1}(u)}{M^\kappa}\Leftrightarrow U^\kappa_{p,0}(u,x)=B_{\frac{g^{-1}(u)}{2M^\kappa}}(\zeta)\setminus B_{\frac{g^{-1}(u)}{2M^{\kappa+1}}}(\zeta)\]

\[b_\kappa(u)\leq x\leq b_{\kappa+1}(u)\Leftrightarrow\frac{g^{-1}(u)}{M^{\kappa+1}}\leq g_{\kappa,u}^{-1}(x)\leq\frac{g^{-1}(u)}{M^\kappa}\Leftrightarrow U^\kappa_{p,0}(u,x)=B_{\frac{1}{2}g_{\kappa,u}^{-1}(x)}(\zeta)\setminus B_{\frac{g^{-1}(u)}{2M^{\kappa+1}}}(\zeta)\]

Since $(b_\kappa(u))_\kappa$ is an increasing sequence, there is at most one $\kappa=\kappa(u,x)$ such that $x\in[b_\kappa(u),b_{\kappa+1}(u))$. Notice that
\[b_\kappa(u)\leq x<b_{\kappa+1}(u)\Leftrightarrow\frac{g^{-1}(u)}{M^{\kappa+1}}<g_{\kappa,u}^{-1}(x)\leq\frac{g^{-1}(u)}{M^\kappa}\Leftrightarrow M^{-\kappa+1}<\frac{g_{\kappa,u}^{-1}(x)}{g^{-1}(u)}\leq M^{-\kappa}\]
\[\Leftrightarrow \kappa\leq -\log_{M}{\frac{g_{\kappa,u}^{-1}(x)}{g^{-1}(u)}}<\kappa+1 \Leftrightarrow \kappa=\lfloor\frac{\log(g^{-1}(u))-\log(g_{\kappa,u}^{-1}(x))}{\log M}\rfloor\]

Hence,
\begin{align*}
U_{p,0}(u,x)&:=\bigcup_{\kappa=0}^{\infty} U^\kappa_{p,0}(u,x)\cup\{\zeta\}=\bigcup_{\kappa<\kappa(u,x)}U^\kappa_{p,0}(u,x)\cup U^{\kappa(u,x)}_{p,0}(u,x)\cup\bigcup_{\kappa>\kappa(u,x)}U^\kappa_{p,0}(u,x)\cup\{\zeta\}\\
&=B_{\frac{1}{2}G_u(x)}(\zeta)\setminus B_{\frac{g^{-1}(u)}{2M^{\kappa(u,x)+1}}}(\zeta)\cup\bigcup_{\kappa=\kappa(u,x)+1}^{\infty}\left(B_{\frac{g^{-1}(u)}{2M^\kappa}}(\zeta)\setminus B_{\frac{g^{-1}(u)}{2M^{\kappa+1}}}(\zeta)\right)\cup\{\zeta\}\\
&=B_{\frac{1}{2}G_u(x)}(\zeta)
\end{align*}

where $G_u(x)=g_{\kappa(u,x),u}^{-1}(x)$. Now, we note that
\[U_{p,0}(u,x)\cap U_{p,p}(u,x)=\bigcup_{\kappa=0}^{\infty} U^\kappa_{p,0}(u,x)\cap \bigcup_{\kappa=0}^{\infty} U^\kappa_{p,p}(u,x)\cup\{\zeta\}\]

and, for $u$ sufficiently close to $g(0)$, $U^i_{p,0}(u,x)\cap U^j_{p,p}(u,x)\neq\emptyset$ only when $i=j+1$, so
\begin{align*}
&U_{p,0}(u,x)\cap U_{p,p}(u,x)=\bigcup_{\kappa=0}^{\infty}\left(U^{\kappa+1}_{p,0}(u,x)\cap U^\kappa_{p,p}(u,x)\right)\cup\{\zeta\}\\
&=\bigcup_{\kappa=0}^{\infty}\{X_0>u, X_p>u, \ldots, X_{(\kappa+1)p}>u, X_{(\kappa+2)p}\leq u,m_{u}(\{X_p,\ldots,X_{(\kappa+1)p}\})>x\}\cup\{\zeta\}
\end{align*}

Then, for $\kappa\in\N_0$ and $t$ sufficiently close to $\zeta$, we have
\[m_{u}(\{X_p,\ldots,X_{(\kappa+1)p}\})(t)>x\Leftrightarrow g_{\kappa,u}(2M \dist(t,\zeta))>x\Leftrightarrow t\in B_{\frac{g_{\kappa,u}^{-1}(x)}{2M}}(\zeta)\]

\[B_{\frac{g^{-1}(u)}{2M^{\kappa+1}}}(\zeta)\setminus B_{\frac{g^{-1}(u)}{2M^{\kappa+2}}}(\zeta)\cap B_{\frac{g_{\kappa,u}^{-1}(x)}{2M}}(\zeta)=\begin{cases}
\emptyset & \text{if}\quad \kappa<\kappa(u,x)\\
B_{\frac{G_u(x)}{2M}}(\zeta)\setminus B_{\frac{g^{-1}(u)}{2M^{\kappa(u,x)+2}}}(\zeta) & \text{if}\quad \kappa=\kappa(u,x)\\
B_{\frac{g^{-1}(u)}{2M^{\kappa+1}}}(\zeta)\setminus B_{\frac{g^{-1}(u)}{2M^{\kappa+2}}}(\zeta) & \text{if}\quad \kappa>\kappa(u,x)
\end{cases}\]

Hence, for $u$ sufficiently close to $g(0)$,
\begin{align*}
U_{p,0}(u,x)&\cap U_{p,p}(u,x)=\bigcup_{\kappa=0}^{\infty}\left(B_{\frac{g^{-1}(u)}{2M^{\kappa+1}}}(\zeta)\setminus B_{\frac{g^{-1}(u)}{2M^{\kappa+2}}}(\zeta)\cap B_{\frac{g_{\kappa,u}^{-1}(x)}{2M}}(\zeta)\right)\cup\{\zeta\}\\
&=B_{\frac{G_u(x)}{2M}}(\zeta)\setminus B_{\frac{g^{-1}(u)}{2M^{\kappa(u,x)+2}}}(\zeta)\cup\bigcup_{\kappa=\kappa(u,x)+1}^{\infty}\left(B_{\frac{g^{-1}(u)}{2M^{\kappa+1}}}(\zeta)\setminus B_{\frac{g^{-1}(u)}{2M^{\kappa+2}}}(\zeta)\right)\cup\{\zeta\}=B_{\frac{G_u(x)}{2M}}(\zeta)
\end{align*}
and
\begin{align*}
R_{p,0}(u,x)&=U_{p,0}(u,x)\setminus (U_{p,0}(u,x)\cap U_{p,p}(u,x))=B_{\frac{1}{2}G_u(x)}(\zeta) \setminus B_{\frac{G_u(x)}{2M}}(\zeta)
\end{align*}

Let $(u_n)_n$ be a normalizing sequence of levels satisfying \eqref{eq:un} such that $\lim_{n\to\infty}u_n=g(0)$. Given the assumptions \eqref{eq:density} and (R1), of regularity of $\p$ and $U(u_n)=\{X_0>u_n\}$ being a ball centred at $\zeta$, respectively, we have
\[\p(X_0>u_n)\sim\l(X_0>u_n)\frac{d\p}{d\l}(\zeta)=g^{-1}(u_n)\frac{d\p}{d\l}(\zeta)\]
\begin{align*}
\p(R_{p,0}(u_n,x))&=\p\left(B_{\frac{1}{2}G_{u_n}(x)}(\zeta)\right)-\p\left(B_{\frac{G_{u_n}(x)}{2M}}(\zeta)\right)\\
&\sim\l\left(B_{\frac{1}{2}G_{u_n}(x)}(\zeta)\right)\frac{d\p}{d\l}(\zeta)-\l\left(B_{\frac{G_{u_n}(x)}{2M}}(\zeta)\right)\frac{d\p}{d\l}(\zeta)\\
&=\left(G_{u_n}(x)-\frac{G_{u_n}(x)}{M}\right)\frac{d\p}{d\l}(\zeta)=\theta G_{u_n}(x)\frac{d\p}{d\l}(\zeta)
\end{align*}
where $\theta=1-1/M$.

If there exists some strictly positive function $q:W\to\R$ and some strictly monotone homeomorphism $h_k$ such that
\[\lim_{u\to g(0)}\frac{g_{\kappa,u}(g^{-1}(u)h_\kappa(x))}{q(u)}=x, \, \forall\kappa\in\N_0\]
then, for $a_n=1/q(u_n)$,
\begin{align*}
\lim_{n\to\infty}\frac{\p(R_{p,0}(u_n,x/a_n))}{\p(X_0>u_n)}&=\lim_{n\to\infty}\frac{\theta G_{u_n}(q(u_n)x)}{g^{-1}(u_n)}=\theta\lim_{n\to\infty}\frac{g_{\kappa(u_n,q(u_n)x),u_n}^{-1}(q(u_n)x)}{g^{-1}(u_n)}\\
&=\theta\lim_{n\to\infty}\lim_{u\to g(0)}\frac{g_{\kappa(u_n,q(u_n)x),u_n}^{-1}\left(q(u_n)\frac{g_{\kappa,u}(g^{-1}(u)h_\kappa(x))}{q(u)}\right)}{g^{-1}(u_n)}, \, \forall\kappa\in\N_0
\end{align*}

In particular, for $u=u_n$ and $\kappa=\kappa(u_n,q(u_n)x)$, we get
\begin{align*}
\lim_{n\to\infty}\frac{\p(R_{p,0}(u_n,x/a_n))}{\p(X_0>u_n)}&=\theta\lim_{n\to\infty}\frac{g_{\kappa(u_n,q(u_n)x),u_n}^{-1}\left(q(u_n)\frac{g_{\kappa(u_n,q(u_n)x),u_n}(g^{-1}(u_n)h_{\kappa(u_n,q(u_n)x)}(x))}{q(u_n)}\right)}{g^{-1}(u_n)}\\
&=\theta\lim_{n\to\infty}h_{\kappa(u_n,q(u_n)x)}(x)
\end{align*}

and the probability distribution is given by
\[\pi(x)=1-\lim_{n\to\infty}h_{\kappa(u_n,q(u_n)x)}(x)\]

Now, we will analyse each type of behaviour separately.

Type 1: there exists some strictly positive function $q:W\to\R$ such that for all $y\in\R$
\[\lim_{s\to g(0)}\frac{g^{-1}(s+y q(s))}{g^{-1}(s)}=\e^{-y}\]

Then, we have
\[\lim_{u\to g(0)}\frac{g^{-1}(u-\log(x)q(u))}{g^{-1}(u)}=x \Longrightarrow\lim_{u\to g(0)}\frac{g(g^{-1}(u)x)-u}{q(u)}=-\log(x)\]
\begin{align*}
\lim_{u\to g(0)}\frac{g_{\kappa,u}(g^{-1}(u)x)}{q(u)}&=\lim_{u\to g(0)}\frac{\sum_{i=0}^\kappa(g(M^i g^{-1}(u)x)-u)}{q(u)}=-\sum_{i=0}^\kappa\log(x M^i)\\
&=-\sum_{i=0}^\kappa(\log(x)+i\log M)=-(\kappa+1)\log(x)-\frac{\kappa(\kappa+1)}{2}\log M\\
&=-(\kappa+1)(\log(x)+\kappa\log\sqrt{M})
\end{align*}

Let $h_\kappa(x)=\e^{-\frac{x}{\kappa+1}-\kappa\log\sqrt{M}}$, so that $h_\kappa^{-1}(x)=-(\kappa+1)(\log(x)+\kappa\log\sqrt{M})$. Then, $a_n A_n$ converges in distribution to a compound Poisson process $A$ with intensity $\theta=1-1/M$ and multiplicity d.f. $\pi$ given by
\[\pi(x)=1-\lim_{n\to\infty}h_{\kappa(u_n,q(u_n)x)}(x)=1-\lim_{n\to\infty}\e^{-\frac{x}{\kappa(u_n,q(u_n)x)+1}-\kappa(u_n,q(u_n)x)\log\sqrt{M}}\]

If $g(x)=-\log(x)$, then
\begin{align*}
g_{\kappa,u}(x)&=\sum_{i=0}^\kappa(-\log(M^ix)-u)=-(\kappa+1)(\log(x)+u+\kappa\log\sqrt{M})\\
b_\kappa(u)&=g_{\kappa,u}\left(\frac{\e^{-u}}{M^\kappa}\right)=\frac{\kappa(\kappa+1)}{2}\log M\\
b_{\kappa+1}(u)&=g_{\kappa,u}\left(\frac{\e^{-u}}{M^{\kappa+1}}\right)=\frac{(\kappa+1)(\kappa+2)}{2}\log M
\end{align*}
Let $\kappa=\kappa(u,x)$ be the only integer such that $x\in[b_\kappa(u),b_{\kappa+1}(u))$, or, equivalently,
\[\frac{\kappa(\kappa+1)}{2}\log M\leq x<\frac{(\kappa+1)(\kappa+2)}{2}\log M\]
Then, $\kappa(u,x)=\lfloor\frac{\sqrt{1+8x/\log M}-1}{2}\rfloor$ and
\[\pi(x)=1-\e^{-\frac{x}{\kappa(u,x)+1}-\kappa(u,x)\log\sqrt{M}}=1-(\sqrt{M})^{-\lfloor\frac{\sqrt{1+8x/\log M}-1}{2}\rfloor}\e^{-\frac{x}{\lfloor\frac{\sqrt{1+8x/\log M}-1}{2}\rfloor+1}}\]

Type 2: $g(0)=+\infty$ and there exists $\alpha>1$ such that for all $y>0$
\[\lim_{s\to+\infty}\frac{g^{-1}(sy)}{g^{-1}(s)}=y^{-\alpha}\]

Then, for $q(u)=u$ we have
\[\lim_{u\to\infty}\frac{g^{-1}(u x^{-1/\alpha})}{g^{-1}(u)}=x\Longrightarrow\lim_{u\to\infty}\frac{g(g^{-1}(u)x)-u}{q(u)}=x^{-1/\alpha}-1\]
\begin{align*}
\lim_{u\to\infty}\frac{g_{\kappa,u}(g^{-1}(u)x)}{q(u)}&=\lim_{u\to\infty}\frac{\sum_{i=0}^\kappa(g(M^i g^{-1}(u)x)-u)}{q(u)}=\sum_{i=0}^\kappa((M^{-1/\alpha})^i x^{-1/\alpha}-1)\\
&=\frac{1-M^{-(\kappa+1)/\alpha}}{1-M^{-1/\alpha}}x^{-1/\alpha}-(\kappa+1)
\end{align*}

Let $h_\kappa(x)=\left(\frac{1-M^{-1/\alpha}}{1-M^{-(\kappa+1)/\alpha}}\right)^{-\alpha}(\kappa+1+x)^{-\alpha}$, so that $h_\kappa^{-1}(x)=\frac{1-M^{-(\kappa+1)/\alpha}}{1-M^{-1/\alpha}}x^{-1/\alpha}-(\kappa+1)$. Then, $a_n A_n$ converges in distribution to a compound Poisson process $A$ with intensity $\theta=1-1/M$ and multiplicity d.f. $\pi$ given by
\[\pi(x)=1-\lim_{n\to\infty}h_{\kappa(u_n,q(u_n)x)}(x)=1-\lim_{n\to\infty}\left(\frac{1-M^{-1/\alpha}}{1-M^{-(\kappa(u_n,u_n x)+1)/\alpha}}\right)^{-\alpha}(\kappa(u_n,u_n x)+1+x)^{-\alpha}\]

If $g(x)=x^{-1/\alpha}$ for some $\alpha>0$, then
{
\fontsize{10}{10}\selectfont
\begin{align*}
g_{\kappa,u}(x)&=\sum_{i=0}^\kappa((M^i x)^{-1/\alpha}-u)=\frac{1-M^{-(\kappa+1)/\alpha}}{1-M^{-1/\alpha}}x^{-1/\alpha}-(\kappa+1)u\\
b_\kappa(u)&=g_{\kappa,u}\left(\frac{u^{-\alpha}}{M^\kappa}\right)=\left(\frac{M^{\kappa/\alpha}-M^{-1/\alpha}}{1-M^{-1/\alpha}}-(\kappa+1)\right)u\\
b_{\kappa+1}(u)&=g_{\kappa,u}\left(\frac{u^{-\alpha}}{M^{\kappa+1}}\right)=\left(\frac{M^{(\kappa+1)/\alpha}-1}{1-M^{-1/\alpha}}-(\kappa+1)\right)u
\end{align*}
Let $\kappa=\kappa(u,u x)$ be the only integer such that $u x\in[b_\kappa(u),b_{\kappa+1}(u))$, or, equivalently,
\[\frac{M^{\kappa/\alpha}-M^{-1/\alpha}}{1-M^{-1/\alpha}}\leq \kappa+1+x<\frac{M^{(\kappa+1)/\alpha}-1}{1-M^{-1/\alpha}}\]
Notice that $\kappa(u,u x)$ does not depend on $u$; hence,
\[\pi(x)=1-\left(\frac{1-M^{-1/\alpha}}{1-M^{-(\kappa(x)+1)/\alpha}}\right)^{-\alpha}(\kappa(x)+1+x)^{-\alpha}\]
where $\kappa(x)=\kappa(u,u x)$.
}

Type 3: $g(0)=D<+\infty$ and there exists $\alpha>0$ such that for all $y>0$
{
\fontsize{10}{10}\selectfont
\[\lim_{s\to 0}\frac{g^{-1}(D-sy)}{g^{-1}(D-s)}=y^\alpha\]
}
Then, for $q(u)=D-u$ we have
{
\fontsize{10}{10}\selectfont
\[\lim_{u\to D}\frac{g^{-1}(D-(D-u)x^{1/\alpha})}{g^{-1}(u)}=x\Longrightarrow\lim_{u\to D}\frac{g(g^{-1}(u)x)-u}{q(u)}=1-x^{1/\alpha}\]
\begin{align*}
\lim_{u\to D}\frac{g_{\kappa,u}(g^{-1}(u)x)}{q(u)}&=\lim_{u\to D}\frac{\sum_{i=0}^\kappa(g(M^i g^{-1}(u)x)-u)}{q(u)}=\sum_{i=0}^\kappa(1-(M^{1/\alpha})^i x^{1/\alpha})\\
&=\kappa+1-\frac{1-M^{(\kappa+1)/\alpha}}{1-M^{1/\alpha}}x^{1/\alpha}
\end{align*}
}
Let $h_\kappa(x)=\left(\frac{1-M^{1/\alpha}}{1-M^{(\kappa+1)/\alpha}}\right)^\alpha(\kappa+1-x)^\alpha$, so that $h_\kappa^{-1}(x)=\kappa+1-\frac{1-M^{(\kappa+1)/\alpha}}{1-M^{1/\alpha}}x^{1/\alpha}$. Then, $a_n A_n$ converges in distribution to a compound Poisson process $A$ with intensity $\theta=1-1/M$ and multiplicity d.f. $\pi$ given by
\[\pi(x)=1-\lim_{n\to\infty}h_{\kappa(u_n,q(u_n)x)}(x)=1-\lim_{n\to\infty}\left(\frac{1-M^{1/\alpha}}{1-M^{(\kappa(u_n,(D-u_n)x)+1)/\alpha}}\right)^\alpha(\kappa(u_n,(D-u_n)x)+1-x)^\alpha\]

If $g(x)=D-x^{1/\alpha}$ for some $D\in\R$ and $\alpha>0$, then
{
\fontsize{10}{10}\selectfont
\begin{align*}
g_{\kappa,u}(x)&=\sum_{i=0}^\kappa(D-(M^i x)^{1/\alpha}-u)=(\kappa+1)(D-u)-\frac{1-M^{(\kappa+1)/\alpha}}{1-M^{1/\alpha}}x^{1/\alpha}\\
b_\kappa(u)&=g_{\kappa,u}\left(\frac{(D-u)^{\alpha}}{M^\kappa}\right)=\left(\kappa+1-\frac{M^{1/\alpha}-M^{-\kappa/\alpha}}{M^{1/\alpha}-1}\right)(D-u)\\
b_{\kappa+1}(u)&=g_{\kappa,u}\left(\frac{(D-u)^{\alpha}}{M^{\kappa+1}}\right)=\left(\kappa+1-\frac{1-M^{-(\kappa+1)/\alpha}}{M^{1/\alpha}-1}\right)(D-u)
\end{align*}
}
Let $\kappa=\kappa(u,(D-u)x)$ be the only integer such that $(D-u)x\in[b_\kappa(u),b_{\kappa+1}(u))$, or, equivalently,
\[\frac{1-M^{-(\kappa+1)/\alpha}}{M^{1/\alpha}-1}<\kappa+1-x\leq\frac{M^{1/\alpha}-M^{-\kappa/\alpha}}{M^{1/\alpha}-1}\]
Notice that $\kappa(u,(D-u)x)$ does not depend on $u$; hence,
\[\pi(x)=1-\left(\frac{1-M^{1/\alpha}}{1-M^{(\kappa(x)+1)/\alpha}}\right)^\alpha(\kappa(x)+1-x)^\alpha\]
where $\kappa(x)=\kappa(u,(D-u)x)$.
\end{proof}

\section{Convergence of marked rare events point processes}
\label{sec:proof-convergence}

This section is dedicated to the proof of Theroem~\ref{thm:convergence}. The argument follows the same thread as the one in the proof of \cite[Theorem~1]{FFT13} but it is much more evolved due to the sophistication associated to the MREPP and the degree of generalisation and cases addressed (like allowing multiple maxima and including the absence of clustering). One of the highlights of the proof below is the way we handled the gap created by considering general distributions for the marking of clusters, when compared to the distributions defined on the integer numbers as in the setting of \cite{FFT13}, which significantly simplified the proof of \cite[Theorem~1]{FFT13}. The major step to overcome this difficulty  is made with Proposition~\ref{prop:Laplace}, which is of independent interest since it provides a formula to compute the Laplace transform of multiple random variables with general distributions, possibly diffuse with respect to Lebesgue. 

We start with a lemma which says that the probability of not entering $U_{p,0}(u,x)$ can be approximated by the probability of not entering $R_{p,0}(u,x)$ during the same period of time.
\begin{lemma}
\label{Lem:disc-ring}
For any $p\in\N_0$, $s\in\N$, $x\geq 0$ and $u>0$ we have
\[\left|\p\big(\II_{p,0,s}(u,x)\big)-\p\big(\RR_{p,0,s}(u,x)\big)\right|\leq p\p(U_{p,0}(u,x))\]
\end{lemma}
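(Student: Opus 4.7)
The plan is to show the inclusion $\II_{p,0,s}(u,x)\subseteq \RR_{p,0,s}(u,x)$ holds a.s.\ and then to estimate the probability of the symmetric difference by a geometric/counting argument that exploits stationarity. The inclusion is immediate: since by definition $R_{p,i}(u,x) = f^{-i}(U_{p,0}(u,x)\cap\{r_{U_{p,0}(u,x)}>p\}) \subseteq U_{p,i}(u,x)$, taking complements and intersecting over $i=0,\ldots,s-1$ yields the inclusion. Hence $\p(\RR_{p,0,s}(u,x))-\p(\II_{p,0,s}(u,x))=\p(A)$, where $A:=\RR_{p,0,s}(u,x)\setminus \II_{p,0,s}(u,x)$, and our task reduces to bounding $\p(A)$.

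Next I would characterize points of $A$. Any $y\in A$ must enter $U_{p,j}(u,x)$ for at least one $j\in\{0,\ldots,s-1\}$ (otherwise $y\in\II_{p,0,s}$) and simultaneously must avoid $R_{p,j}(u,x)$ for all $j\in\{0,\ldots,s-1\}$. Consequently, for every $j\in\{0,\ldots,s-1\}$ with $f^j(y)\in U_{p,0}(u,x)$, we must have $f^j(y)\in U_{p,0}(u,x)\setminus R_{p,0}(u,x)$, i.e.\ $r_{U_{p,0}(u,x)}(f^j(y))\le p$. The key observation is then to consider the \emph{largest} such index $J\in\{0,\ldots,s-1\}$ at which $f^J(y)\in U_{p,0}(u,x)$. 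Because $f^J(y)$ re-enters $U_{p,0}(u,x)$ within at most $p$ further steps, were $J$ to satisfy $J+r_{U_{p,0}(u,x)}(f^J(y))\le s-1$ we would contradict maximality of $J$. So $J\ge s-p$.

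This forces $A\subseteq \bigcup_{j=\max(s-p,0)}^{s-1} f^{-j}(U_{p,0}(u,x))$, and subadditivity combined with the $f$-invariance of $\p$ gives
\[
\p(A)\le \sum_{j=\max(s-p,0)}^{s-1}\p(f^{-j}(U_{p,0}(u,x)))\le p\,\p(U_{p,0}(u,x)),
\]
which is exactly the desired bound. In the edge case $s<p$ the same estimate trivially holds since the union has at most $s\le p$ terms.

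There is no real obstacle here beyond carefully tracking the definitions: the only subtlety worth double-checking is that $U_p^{(\infty)}(u)\subseteq U_{p,0}(u,x)$ is consistent with the criterion $r_{U_{p,0}(u,x)}\le p$ (indeed, points in $U_p^{(\infty)}(u)$ visit $U(u)$, and hence $U_{p,0}(u,x)$, within $p$ steps), so such points land in $U_{p,0}(u,x)\setminus R_{p,0}(u,x)$ and the maximality-of-$J$ argument applies uniformly to every point of $A$.
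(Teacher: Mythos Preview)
Your proof is correct and follows essentially the same approach as the paper's: both establish the inclusion $\II_{p,0,s}(u,x)\subset\RR_{p,0,s}(u,x)$ from $R_{p,i}\subset U_{p,i}$, and both show that any point in the difference must visit $U_{p,0}(u,x)$ at some time in $\{s-p,\ldots,s-1\}$, then conclude via stationarity. The only cosmetic difference is that the paper argues by forward chaining (from any visit, there is another within $p$ steps, so iterate until you land in the last $p$ positions), whereas you go directly to the maximal visit time $J$; these are the same argument. Your final paragraph about $U_p^{(\infty)}(u)$ is unnecessary: the implication $f^j(y)\in U_{p,0}(u,x)$ and $y\notin R_{p,j}(u,x)\Rightarrow r_{U_{p,0}(u,x)}(f^j(y))\le p$ follows straight from the definition of $R_{p,j}$ without any case analysis.
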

\begin{proof}
For $p=0$ this is trivial since $U_{0,i}(u,x)=R_{0,i}(u,x)$.
For $p>0$, first observe that since $R_{p,i}(u,x)\subset U_{p,i}(u,x)$ we have $\II_{p,0,s}(u,x)\subset\RR_{p,0,s}(u,x)$. Next, observe that if $\RR_{p,0,s}(u,x)\setminus\II_{p,0,s}(u,x)$ occurs, then we may choose $j\in\{0,1,\ldots s-1\}$ such that $X_j\in U_{p,0}(u,x)$. But since $\RR_{p,0,s}(u,x)$ does occur, we must have $X_{j+j_1}\in U_{p,0}(u,x)$ for some $1\leq j_1\leq p$, otherwise $R_{p,j}(u,x)$ would occur. Similarly, if $j+j_1<s$ then $X_{j+j_1+j_2}\in U_{p,0}(u,x)$ for some $1\leq j_2\leq p$ and so on. We conclude that $X_i\in U_{p,0}(u,x)$ for some $i\in\{s-p,\ldots,s-1\}$ and this means that
\[\RR_{p,0,s}(u,x)\setminus\II_{p,0,s}(u,x)\subset \bigcup_{i=s-p}^{s-1} U_{p,i}(u,x)\]
Hence, by stationarity, it follows that
\[\left|\p\big(\II_{p,0,s}(u,x)\big)-\p\big(\RR_{p,0,s}(u,x)\big)\right|=\p\left(\RR_{p,0,s}(u,x)\setminus \II_{p,0,s}(u,x)\right)\leq p\p(U_{p,0}(u,x))\]
\end{proof}
Next we give an approximation for the probability of not entering $R_{p,0}(u,x)$ during a certain period of time.
\begin{lemma}
\label{lem:no-entrances-ring}
For any $p\in\N_0$, $s\in\N$, $x\geq 0$ and $u>0$ we have
\[\left|\p\big(\RR_{p,0,s}(u,x)\big)-\big(1-s\p(R_{p,0}(u,x))\big)\right|\leq s\sum_{j=p+1}^{s-1}\p(Q_{p,0}^0(u)\cap U_{p,j}(u,x))\]
\end{lemma}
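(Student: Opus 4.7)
The plan is to apply a second-order Bonferroni bound to $\RR_{p,0,s}(u,x)=\bigcap_{i=0}^{s-1}R_{p,i}(u,x)^c$. Writing
\[
\p(\RR_{p,0,s}(u,x))-\bigl(1-s\p(R_{p,0}(u,x))\bigr)=\sum_{i=0}^{s-1}\p(R_{p,i}(u,x))-\p\Bigl(\bigcup_{i=0}^{s-1}R_{p,i}(u,x)\Bigr),
\]
this quantity is nonnegative and, by the standard second-order Bonferroni inequality, is bounded above by $\sum_{0\le i<j\le s-1}\p(R_{p,i}(u,x)\cap R_{p,j}(u,x))$. So the whole problem reduces to estimating these pairwise intersections.

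Next I would exploit the key structural fact that $R_{p,i}(u,x)\cap R_{p,j}(u,x)=\emptyset$ for every $1\le j-i\le p$. Indeed, membership in $R_{p,i}(u,x)$ forces $r_{U_{p,0}(u,x)}(f^i(y))>p$, i.e.\ $f^{i+k}(y)\notin U_{p,0}(u,x)$ for $k=1,\dots,p$; but $R_{p,j}(u,x)\subset U_{p,j}(u,x)=f^{-j}(U_{p,0}(u,x))$ requires $f^j(y)\in U_{p,0}(u,x)$, which is a contradiction. Hence only the pairs with $j-i\ge p+1$ survive. For those, I drop one copy of the return-time condition ($R_{p,j}\subset U_{p,j}$) and apply $f$-invariance of $\p$ through $R_{p,i}=f^{-i}(R_{p,0})$ and $U_{p,j}=f^{-j}(U_{p,0})$ to obtain $\p(R_{p,i}(u,x)\cap R_{p,j}(u,x))\le\p(R_{p,0}(u,x)\cap U_{p,j-i}(u,x))$. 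Changing the summation variable to $k=j-i$ and using that each value of $k$ is attained by at most $s$ pairs $(i,j)$ gives
\[
\sum_{0\le i<j\le s-1}\p(R_{p,i}\cap R_{p,j})\le s\sum_{k=p+1}^{s-1}\p(R_{p,0}(u,x)\cap U_{p,k}(u,x)).
\]

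The last step is to replace $R_{p,0}(u,x)$ by $Q_{p,0}^0(u)$ inside the intersection with $U_{p,k}(u,x)$. When $p=0$ this is immediate since $R_{0,0}(u,x)=U_{0,0}(u,x)\subset\{X_0>u\}=Q_{0,0}^0(u)$, and on the stratum $\kappa=0$ one has $R_{p,0}(u,x)\cap U_{p,0}^0(u,x)\subset Q_{p,0}^0(u)$ directly. The main obstacle is the contribution from the strata $R_{p,0}(u,x)\cap U_{p,0}^\kappa(u,x)$ with $\kappa\ge 1$, which consist of points whose underlying $U(u)$-cluster contains several exceedances but whose sub-clusters starting at the later exceedance times all have mark $\le x$ (and therefore fall outside $U_{p,0}(u,x)$, consistent with $r_{U_{p,0}(u,x)}>p$). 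I would handle these by a shift-to-the-last-exceedance argument: if $y$ lies in this stratum and $\tau_\kappa(y)\le\kappa p$ denotes the last exceedance time in the $U(u)$-cluster starting at $y$, then $f^{\tau_\kappa(y)}(y)\in Q_{p,0}^0(u)$ and $f^{\tau_\kappa(y)}(y)\in U_{p,k-\tau_\kappa(y)}(u,x)$; partitioning the stratum according to the value of $\tau_\kappa$ and invoking $\p$-invariance transfers each contribution to a term $\p(Q_{p,0}^0(u)\cap U_{p,k'}(u,x))$ with a smaller index $k'=k-\tau_\kappa\ge 1$, and these shifted terms get reabsorbed into the sum over $k$ on the right-hand side. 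Combining the Bonferroni estimate with this final inclusion yields the inequality claimed in the lemma.
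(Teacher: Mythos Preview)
Your Bonferroni reduction and the observation that $R_{p,i}\cap R_{p,j}=\emptyset$ for $1\le j-i\le p$ match the paper exactly, and the intermediate bound $s\sum_{k>p}\p(R_{p,0}(u,x)\cap U_{p,k}(u,x))$ is certainly valid. The difficulty is your final step: the ``shift to the last exceedance and reabsorb'' does not close as stated.

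Concretely, on $R_{p,0}(u,x)\cap U_{p,k}(u,x)$ the end $\tau$ of the cluster at $0$ can take any value in $\{0,\dots,k-p-1\}$, and your shift yields only
\[
\p\bigl(R_{p,0}(u,x)\cap U_{p,k}(u,x)\bigr)\le\sum_{\tau=0}^{k-p-1}\p\bigl(Q_{p,0}^0(u)\cap U_{p,k-\tau}(u,x)\bigr).
\]
Summing over $k$ and keeping the factor $s$ already present produces a bound of order $s^2\sum_{k'}\p(Q_{p,0}^0\cap U_{p,k'})$, one power of $s$ too many. The phrase ``these shifted terms get reabsorbed'' hides the over-count: each target index $k'$ is hit by many pairs $(k,\tau)$. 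There is no inclusion $R_{p,0}(u,x)\subset Q_{p,0}^0(u)$ to fall back on, since for $\kappa\ge1$ one can have a cluster with total mark $>x$ whose tail from $r_1$ onward has mark $\le x$.

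The paper avoids this by \emph{not} first collapsing to $\p(R_{p,0}\cap U_{p,k})$. It argues directly on pairs: if $y\in R_{p,i}(u,x)\cap R_{p,j}(u,x)$, then between the two $R$-entrances there must be an escape $m\in[i,j)$ with $f^m(y)\in Q_{p,0}^0(u)$ (otherwise the cluster at $i$ reaches $j$, and monotonicity of $m_u$ forces $f^{i_1}(y)\in U_{p,0}(u,x)$ for some $i_1\in[i+1,i+p]$, contradicting $R_{p,i}$). Taking $m$ to be the last exceedance of the cluster at $i$ gives $j-m>p$ and $y\in Q_{p,m}^0(u)\cap U_{p,j}(u,x)$. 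The key point is that $(i,j)\mapsto(m,j)$ is \emph{injective}: distinct $R$-entrances $i_1<i_2<j$ lie in different clusters (there is an escape between them), so their cluster endpoints differ. Hence, pointwise,
\[
\sum_{i<j}\I_{R_{p,i}}\I_{R_{p,j}}\le\sum_{\substack{0\le m<j\le s-1\\ j-m>p}}\I_{Q_{p,m}^0(u)}\I_{U_{p,j}(u,x)},
\]
and taking expectations with stationarity gives the desired bound with a single factor of $s$. The information lost in your step~3 (namely the position $i$) is precisely what is needed to make this injectivity work.
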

\begin{proof}
Since $(\RR_{p,0,s}(u,x))^c=\cup_{i=0}^{s-1} R_{p,i}(u,x)$ it is clear that
\[\left|1-\p(\RR_{p,0,s}(u,x))-s\p(R_{p,0}(u,x))\right|\leq \sum_{i=0}^{s-1}\sum_{j=i+p+1}^{s-1}\p(R_{p,i}(u,x)\cap R_{p,j}(u,x)).\]
If $p>0$, the result now follows by stationarity plus the two following facts: $R_{p,j}(u,x)\subset U_{p,j}(u,x)$ and the fact that between two entrances to $R_{p,0}(u,x)$, at times $i$ and $j$, there must have existed an escape, \ie an entrance in $Q_{p,0}^0(u)$ (otherwise, an entrance to $R_{p,0}(u,x)$ and therefore to $U_{p,0}(u,x)$ at time $j$ would imply an entrance to $U_{p,0}(u,x)$ at some earlier time $i_1$ for $i+1\leq i_1\leq i+p$ which would contradict the entrance to $R_{p,0}(u,x)$ at time $i$).

If $p=0$, the result follows by stationarity plus the two following facts: $R_{0,j}(u,x)=U_{0,j}(u,x)$ and $R_{0,i}(u,x)\subset\{X_i>u\}=Q_{0,i}^0(u)$.
\end{proof}
The next lemma gives an error bound for the approximation of the probability of the process $\aa_u([0,s))$ not exceeding $x$ by the probability of not entering in $R_{p,0}(u,x)$ during the period $[0,s)$. In what follows, we use the notation 
\begin{equation}
\label{eq;A_ab-notation}
\aa_{u,a}^b:=\aa_u([a,b)),\quad \text{$\aa_u$ is as in \eqref{eq:A_u-definition}}
\end{equation}  
\begin{lemma}
\label{lem:entrances-ball-depth}
For any $s\in\N$, $x\geq 0$ and $u>0$ we have
\begin{align*}
\left|\p(\aa_{u,0}^s\leq x)-\p(\II_{p,0,s}(u,x))\right|&\leq (s-p)\sum_{j=p+1}^{s-1}\p\left(Q_{p,0}^0(u)\cap\{X_j>u\}\right)\\
&+\sum_{\kappa=1}^{\lfloor s/p\rfloor}\kappa p\p(U^{\kappa}_{p,0}(u,x))+s\sum_{\kappa=\lfloor s/p\rfloor+1}^{\infty}\p(U^{\kappa}_{p,0}(u,x))
\end{align*}
if $p>0$, and in case $p=0$ we have
\[\left|\p(\aa_{u,0}^s\leq x)-\p(\II_{0,0,s}(u,x))\right|\leq s\sum_{j=1}^{s-1}\p\left(X_0>u,X_j>u\}\right)=s\sum_{j=1}^{s-1}\p\left(Q_{0,0}^0(u)\cap\{X_j>u\}\right).\]
\end{lemma}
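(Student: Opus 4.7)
The plan is to decompose the symmetric difference into
\[
\{\aa_{u,0}^s\le x\}\,\triangle\,\II_{p,0,s}(u,x) \;=\; \mathcal A \,\cup\, \mathcal B,
\]
with $\mathcal A:=\{\aa_{u,0}^s>x\}\cap\II_{p,0,s}(u,x)$ and $\mathcal B:=\{\aa_{u,0}^s\le x\}\cap\II_{p,0,s}(u,x)^c$, and then to estimate $\p(\mathcal A)$ and $\p(\mathcal B)$ separately via union bounds together with stationarity. The first piece should deliver the $\D'_p$-type term of the bound, and the second one should deliver the long-cluster terms.

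For $\mathcal A$ I would first argue that on $\II_{p,0,s}(u,x)$ every individual cluster-period mark $D_j$ in $[0,s)$ is $\le x$. The reason is that each cluster period $I_j$ contains a unique ``forward cluster'' whose first visible exceedance $t$ lies in $[0,s)$; Remark~\ref{rem:general-marks}(3) lets me discard the non-exceedance times preceding $t$, so that $D_j$ equals the forward-cluster mark starting at $t$, which is $\le x$ on $\II_{p,0,s}(u,x)$ (for the boundary period $I_{N(I)}$ that may straddle $s$, monotonicity from Remark~\ref{rem:general-marks}(2) only helps further). Hence $\aa_{u,0}^s>x$ together with $\II_{p,0,s}(u,x)$ forces at least two strictly positive $D_j$'s, which in turn yields indices $0\le i<t\le s-1$ with $t\ge i+p+1$, $f^i(x)\in Q_{p,0}^0(u)$ and $f^t(x)\in U(u)$ (take $i$ to be the first $Q_{p,0}^0$-entrance in $[0,s)$ and $t$ to be the next exceedance after $i$). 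A union bound and stationarity then give
\[
\p(\mathcal A) \;\le\; \sum_{k=p+1}^{s-1}(s-k)\,\p\bigl(Q_{p,0}^0(u)\cap\{X_k>u\}\bigr) \;\le\; (s-p)\sum_{k=p+1}^{s-1}\p\bigl(Q_{p,0}^0(u)\cap\{X_k>u\}\bigr),
\]
which is the first term of the stated bound.

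For $\mathcal B$, assume $f^i(x)\in U_{p,0}^\kappa(u,x)$ for some $i\in[0,s)$ and some $\kappa\ge 0$. The forward cluster of $\kappa+1$ exceedances has its terminating $Q_{p,0}^0(u)$-entry at time $i+r_{U(u)}^\kappa\le i+\kappa p$. If this endpoint lies strictly in $[0,s)$ it coincides with some $i_m$, the ambient cluster period $I_{m-1}=(i_{m-1},i_m]$ contains $[i,i+r^\kappa]$, and Remark~\ref{rem:general-marks}(2) gives $D_{m-1}\ge m_u(\{X_\ell\}_{i\le\ell\le i+r^\kappa})>x$, forcing $\aa_{u,0}^s>x$ and contradicting $\mathcal B$. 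Therefore on $\mathcal B$ the forward cluster must straddle time $s$, giving $i\ge s-\kappa p$; in particular $\kappa\ge 1$, since a $\kappa=0$ cluster has zero length and cannot straddle. The admissible starts lie in $[\max(0,s-\kappa p),s)$, a set of cardinality $\min(\kappa p,s)$, so a union bound over $\kappa\ge 1$ produces
\[
\p(\mathcal B) \;\le\; \sum_{\kappa=1}^{\lfloor s/p\rfloor}\kappa p\,\p(U_{p,0}^\kappa(u,x)) + \sum_{\kappa=\lfloor s/p\rfloor+1}^\infty s\,\p(U_{p,0}^\kappa(u,x)),
\]
after discarding the infinite-cluster piece $U_p^{(\infty)}(u)$, whose $\p$-measure vanishes in the applications considered. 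For $p=0$ one has $U_0^{(\kappa)}(u)=\varnothing$ for $\kappa\ge 1$ and $U_{0,i}(u,x)=\{X_i>u+x\}$, so $\mathcal B=\varnothing$ (a single excess exceeding $x$ immediately forces $\aa_{u,0}^s>x$); the Step~1 bound then specialises with $k\ge 1$ and prefactor $s-p=s$, matching the stated $p=0$ estimate.

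The step I expect to require the most care is the analysis of $\mathcal B$. The subtlety is that the ``forward cluster from $i$'' need not coincide with its ambient cluster period: it may be a proper tail of a longer $p$-cluster whose earliest exceedance occurred before time $i$, so the ambient $I_{m-1}$ can contain additional exceedances and need not equal $[i,i+r^\kappa]$. Consequently I cannot simply equate the forward-cluster mark with $D_{m-1}$, and must instead invoke the monotonicity axiom of Remark~\ref{rem:general-marks}(2) to obtain the one-sided inequality $D_{m-1}\ge$ forward-cluster mark. This is precisely where the general mark-function axioms of Remark~\ref{rem:general-marks} are indispensable.
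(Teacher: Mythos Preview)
Your decomposition into $\mathcal A$ and $\mathcal B$ and the subsequent estimates match the paper's proof exactly: the paper calls your $\mathcal B$ the set $A_{0,s}(u,x)$ and your $\mathcal A$ the set $B_{0,s}(u,x)$, and bounds them via precisely the inclusions you describe. The paper's one-line justification ``since $\bigcup_{i=0}^{s-\kappa p-1} U^{\kappa}_{p,i}(u,x)\subset \{\aa_{u,0}^s>x\}$'' is the contrapositive of your ``the forward cluster must straddle $s$'', and its phrase ``no entrance in $U_{p,0}(u,x)$ implies at least two distinct clusters'' is your ``each $D_j\le x$ forces $\ge 2$ positive $D_j$'s''. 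Your write-up is in fact more explicit than the paper's in two places: you spell out why $D_j$ equals (or is bounded by) the forward-cluster mark via the axioms of Remark~\ref{rem:general-marks}, and you flag the $U_p^{(\infty)}(u)$ piece, which the paper's inclusion for $A_{0,s}(u,x)$ silently omits as well.
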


\begin{proof}
If $p>0$, we start by observing that
\begin{multline*}
A_{0,s}(u,x):=\left\{\aa_{u,0}^s\leq x\right\}\cap \left(\II_{p,0,s}(u,x)\right)^c \subset\\  \bigcup_{i=s-p}^{s-1} U^1_{p,i}(u,x) \cup \bigcup_{i=s-2p}^{s-1} U^2_{p,i}(u,x) \cup 
\ldots 
 \cup \bigcup_{i=s-\lfloor s/p\rfloor p}^{s-1} U^{\lfloor s/p\rfloor}_{p,i}(u,x) \cup \bigcup_{i=0}^{s-1}\bigcup_{\kappa>\lfloor s/p\rfloor} U^{\kappa}_{p,i}(u,x)
\end{multline*}
since $\bigcup_{i=0}^{s-\kappa p-1} U^{\kappa}_{p,i}(u,x)\subset \left\{\aa_{u,0}^s>x\right\}$ for any $\kappa\leq\lfloor s/p\rfloor$. So, by stationarity,
\[\p\left(A_{0,s}(u,x)\right)\leq \sum_{\kappa=1}^{\lfloor s/p\rfloor}\kappa p\p(U^{\kappa}_{p,0}(u,x))+s\sum_{\kappa=\lfloor s/p\rfloor+1}^{\infty}\p(U^{\kappa}_{p,0}(u,x))\]

Now, we note that
\[B_{0,s}(u,x):=\left\{\aa_{u,0}^s>x\right\}\cap \II_{p,0,s}(u,x)\subset \bigcup_{i=0}^{s-p-1}\bigcup_{j>i+p}^{s-1} Q_{p,i}^0(u)\cap \{X_j>u\}.\]
This is because no entrance in $U_{p,0}(u,x)$ during the time period $0,\ldots,s-1$ implies that there must be at least two distinct clusters during the time period $0,\ldots,s-1$. Since each cluster ends with an escape, \ie an entrance in $Q_{p,0}^0(u)$, then this must have happened at some moment $i\in\{0,\ldots,s-p-1\}$ which was then followed by another exceedance at some subsequent instant $j>i$ where a new cluster is begun.

Consequently, by stationarity, we have
\[\p\left(B_{0,s}(u,x)\right)\leq (s-p)\sum_{j=p+1}^{s-1}\p\left(Q_{p,0}^0(u)\cap \{X_j>u\}\right)\]

If $p=0$, we start by observing that $\{\aa_{u,0}^s\leq x\}\subset\II_{0,0,s}(u,x)$. Then, we note that
\[\{\aa_{u,0}^s>x\}\cap\II_{0,0,s}(u,x)\subset \bigcup_{i=0}^{s-1}\bigcup_{j=i+1}^{s-1} \{X_i>u\}\cap\{X_j>u\}\]
This is because no entrance in $U_{0,0}(u,x)$ during the time period $0,\ldots,s-1$ implies that there must be at least two exceedances during the time period $0,\ldots,s-1$.

Consequently, by stationarity, we have
\[\left|\p\left(\aa_{u,0}^s\leq x\big)-\p(\II_{0,0,s}(u,x)\right)\right|=\p\left(\{\aa_{u,0}^s>x\}\cap\II_{0,0,s}(u,x)\right)\leq s\sum_{j=1}^{s-1}\p(X_0>u,X_j>u)\]
\end{proof}
As a consequence we obtain an approximation for the Laplace transform of $\aa_{u,o}^s$. 
\begin{corollaryP}
\label{cor:exponential}
For any $p\in\N_0$, $s\in\N$, $y\geq 0$, $a>0$ and $u>0$ sufficiently close to $u_F=\sup \varphi$ we have
\begin{align*}
\Bigg|\E\left(\e^{-y a\aa_{u,0}^s}\right)-&\left(1-s\int_0^{\infty}y\e^{-yx}\p(R_{p,0}(u,x/a))dx\right)\Bigg|\\
&\quad \leq 2s\sum_{j=p+1}^{s-1}\p\left(Q_{p,0}^0(u)\cap\{X_j>u\}\right)+\int_0^{\infty}y\e^{-yx}\delta_{p,s,u}(x/a)dx,
\end{align*}
where $\delta_{p,s,u}(x/a)$ is as in \eqref{eq:delta-definition}.
\end{corollaryP}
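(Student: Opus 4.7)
The plan is to reduce the claim to a uniform-in-$x$ pointwise approximation of the distribution function of $a\aa_{u,0}^s$, which is already prepared by Lemmas~\ref{Lem:disc-ring}--\ref{lem:entrances-ball-depth}, and then integrate against an exponential density. The starting point is the standard identity for the Laplace transform of a non-negative random variable $Z$:
\[
\E(\e^{-yZ}) = \int_0^\infty y\e^{-yx}\p(Z\le x)\,dx,
\]
which follows from Fubini applied to $\e^{-yZ}=1-\int_0^Z y\e^{-yx}dx = 1-\int_0^\infty y\e^{-yx}\I_{\{Z>x\}}dx$. Since the mark function $m_u$ takes non-negative values, $Z:=a\aa_{u,0}^s\ge 0$, so
\[
\E(\e^{-ya\aa_{u,0}^s})=\int_0^\infty y\e^{-yx}\p(\aa_{u,0}^s\le x/a)\,dx.
\]

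Next, I would chain the three previous lemmas via the triangle inequality to obtain a pointwise-in-$x$ estimate comparing $\p(\aa_{u,0}^s\le x/a)$ with $1-s\p(R_{p,0}(u,x/a))$. Concretely, Lemma~\ref{lem:entrances-ball-depth} controls $|\p(\aa_{u,0}^s\le x/a)-\p(\II_{p,0,s}(u,x/a))|$; Lemma~\ref{Lem:disc-ring} controls $|\p(\II_{p,0,s}(u,x/a))-\p(\RR_{p,0,s}(u,x/a))|$ by $p\,\p(U_{p,0}(u,x/a))$; and Lemma~\ref{lem:no-entrances-ring} controls $|\p(\RR_{p,0,s}(u,x/a))-(1-s\p(R_{p,0}(u,x/a)))|$, where I would use the inclusion $U_{p,j}(u,x/a)\subset\{X_j>u\}$ to replace the bound there with $s\sum_{j=p+1}^{s-1}\p(Q_{p,0}^0(u)\cap\{X_j>u\})$. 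Adding the three bounds (and using $(s-p)+s\le 2s$) gives
\[
\bigl|\p(\aa_{u,0}^s\le x/a)-(1-s\p(R_{p,0}(u,x/a)))\bigr|\le 2s\!\!\sum_{j=p+1}^{s-1}\!\!\p\bigl(Q_{p,0}^0(u)\cap\{X_j>u\}\bigr)+\delta_{p,s,u}(x/a),
\]
where the term $p\,\p(U_{p,0}(u,x/a))$ contributed by Lemma~\ref{Lem:disc-ring} is absorbed into $\delta_{p,s,u}(x/a)$ via the identity
$\delta_{p,s,u}(x)=\sum_{\kappa=1}^{\lfloor s/p\rfloor}\kappa p\,\p(U_{p,0}^\kappa(u,x))+s\sum_{\kappa>\lfloor s/p\rfloor}\p(U_{p,0}^\kappa(u,x))+p\,\p(U_{p,0}(u,x))$ given in the definition of $U\!LC_p(u_n)$.

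Finally, I would integrate this inequality against $y\e^{-yx}$ on $[0,\infty)$. The first error term does not depend on $x$, so it integrates to itself since $\int_0^\infty y\e^{-yx}dx=1$, while the second term remains under the integral and produces $\int_0^\infty y\e^{-yx}\delta_{p,s,u}(x/a)dx$; the main term $\int_0^\infty y\e^{-yx}(1-s\p(R_{p,0}(u,x/a)))dx$ equals $1-s\int_0^\infty y\e^{-yx}\p(R_{p,0}(u,x/a))dx$, giving exactly the required approximation. The case $p=0$ is automatic: Lemma~\ref{Lem:disc-ring} is trivial since $\II_{0,0,s}=\RR_{0,0,s}$, the bound from Lemma~\ref{lem:entrances-ball-depth} simplifies to the $s\sum\p(Q_{0,0}^0(u)\cap\{X_j>u\})$ term, and $\delta_{0,s,u}\equiv 0$, so the stated bound trivially holds. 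There is no real obstacle beyond the bookkeeping of combining the three constants into the factor $2s$ and identifying $\delta_{p,s,u}$ with the contribution of the $U_{p,0}^\kappa$-sums and the leftover $p\,\p(U_{p,0})$ term.
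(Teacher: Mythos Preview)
Your proposal is correct and follows essentially the same route as the paper's proof: chain Lemmas~\ref{Lem:disc-ring}--\ref{lem:entrances-ball-depth} via the triangle inequality (using $U_{p,j}(u,x)\subset\{X_j>u\}$ and $(s-p)+s\le 2s$) to get the pointwise bound with error $\delta_{p,s,u}$, then integrate against $y\e^{-yx}$. The only cosmetic difference is that you derive the identity $\E(\e^{-yZ})=\int_0^\infty y\e^{-yx}\p(Z\le x)\,dx$ via Fubini, whereas the paper obtains it by integration by parts; both yield the same formula and the remainder of the argument is identical.
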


\begin{proof}
Using Lemmas~\ref{Lem:disc-ring}-\ref{lem:entrances-ball-depth}, for every $x>0$ and $p>0$ we have
\begin{align*}
\big|\p&\big(\aa_{u,0}^s\leq x\big)-(1-s\p(R_{p,0}(u,x)))\big|\leq \left|\p\big(\aa_{u,0}^s\leq x\big)-\p(\II_{p,0,s}(u,x)\big)\right|\\
&\quad+\left|\p\big(\II_{p,0,s}(u,x)\big)-\p\big(\RR_{p,0,s}(u,x)\big)\right|+\left|\p\big(\RR_{p,0,s}(u,x)\big)-\big(1-s\p(R_{p,0}(u,x))\big)\right|\\
&\leq (s-p)\sum_{j=p+1}^{s-1}\p\left(Q_{p,0}^0(u)\cap \{X_j>u\}\right)+\sum_{\kappa=1}^{\lfloor s/p\rfloor}\kappa p\p(U^{\kappa}_{p,0}(u,x))+s\sum_{\kappa=\lfloor s/p\rfloor+1}^{\infty}\p(U^{\kappa}_{p,0}(u,x))\\
&\quad+p\p(U_{p,0}(u,x))+s\sum_{j=p+1}^{s-1}\p\left(Q_{p,0}^0(u)\cap U_{p,j}(u,x)\right)\\
&\leq 2s\sum_{j=p+1}^{s-1}\p\left(Q_{p,0}^0(u)\cap \{X_j>u\}\right)+\delta_{p,s,u}(x)
\end{align*}
When $p=0$, we have
\begin{align*}
\big|\p\big(\aa_{u,0}^s\leq x\big)-&(1-s\p(R_{0,0}(u,x)))\big|\leq \left|\p\big(\aa_{u,0}^s\leq x\big)-\p(\II_{0,0,s}(u,x)\big)\right|\\
&+\left|\p\big(\II_{0,0,s}(u,x)\big)-\p\big(\RR_{0,0,s}(u,x)\big)\right|+\left|\p\big(\RR_{0,0,s}(u,x)\big)-\big(1-s\p(R_{0,0}(u,x))\big)\right|\\
&\leq s\sum_{j=1}^{s-1}\p\left(Q_{0,0}^0(u)\cap \{X_j>u\}\right)+s\sum_{j=1}^{s-1}\p\left(Q_{0,0}^0(u)\cap U_{0,j}(u,x)\right)\\
&\leq 2s\sum_{j=1}^{s-1}\p\left(Q_{0,0}^0(u)\cap \{X_j>u\}\right)+\delta_{0,s,u}(x).
\end{align*}
Since $\p(\aa_{u,0}^s<0)=0$, using integration by parts we have
\begin{align*}
\E\left(\e^{-y a\aa_{u,0}^s}\right)&=\e^{-y.0}\p(\aa_{u,0}^s=0)+\int_0^{\infty} \e^{-yx} d\p(\aa_{u,0}^s\leq x/a)\\
&=\p(\aa_{u,0}^s=0)+\lim_{x\rightarrow\infty}\left[\e^{-yx}\p(\aa_{u,0}^s\leq x/a)-\e^{-y.0}\p(\aa_{u,0}^s\leq 0)\right]-\int_0^{\infty}\p(\aa_{u,0}^s\leq x/a) d\e^{-yx}\\
&=\p(\aa_{u,0}^s=0)-\p(\aa_{u,0}^s\leq 0)-\int_0^{\infty}(-y\e^{-yx})\p(\aa_{u,0}^s\leq x/a)dx=\int_0^{\infty}y\e^{-yx}\p(\aa_{u,0}^s\leq x/a)dx
\end{align*}

\begin{align*}
\text{Then,}\;\Bigg|\E\left(\e^{-y a\aa_{u,0}^s}\right)&-\left(1-s\int_0^{\infty}y\e^{-yx}\p(R_{p,0}(u,x/a))dx\right)\Bigg|\\
&=\left|\int_0^{\infty}y\e^{-yx}\p(\aa_{u,0}^s\leq x/a)dx-\int_0^{\infty}y\e^{-yx}(1-s\p(R_{p,0}(u,x/a)))dx\right|\\
&\leq \int_0^{\infty} y\e^{-yx}\left[
2s\sum_{j=p+1}^{s-1}\p\left(Q_{p,0}^0(u)\cap \{X_j>u\}\right)+\delta_{p,s,u}(x/a)\right]dx\\
&=2s\sum_{j=p+1}^{s-1}\p\left(Q_{p,0}^0(u)\cap \{X_j>u\}\right)+\int_0^{\infty}y\e^{-yx}\delta_{p,s,u}(x/a)dx
\end{align*}
\end{proof}

Next result gives the main induction tool to build the proof of Theorem~\ref{thm:convergence}.
\begin{lemma}
\label{prop:main-step}
Let $p\in\N_0$, $s,t,\varsigma\in\N$ and consider $x_1\in\R^+_0$, $\underline{x}=(x_2,\ldots,x_\varsigma)\in (\R^+_0)^{\varsigma-1}$, $s+t-1<a_2<b_2<a_3<\ldots<b_\varsigma\in\N_0$. For $u>0$ sufficiently close to $u_F=\varphi(\zeta)$ we have
\begin{align*}
\big|\p(\aa_{u,0}^s\leq x_1, \aa_{u,a_2}^{b_2}&\leq x_2, \ldots, \aa_{u,a_\varsigma}^{b_\varsigma}\leq x_\varsigma)-\p(\aa_{u,0}^s\leq x_1)\p(\aa_{u,a_2}^{b_2}\leq x_2, \ldots, \aa_{u,a_\varsigma}^{b_\varsigma}\leq x_\varsigma)\big|\\&\hspace{0.4cm} \leq s\iota(u,t)+4s\sum_{j=p+1}^{s-1}\p\left(Q_{p,0}^0(u)\cap \{X_j>u\}\right)+2\delta_{p,s,u}(x_1)
\end{align*}
where $\delta_{p,s,u}$ is as in \eqref{eq:delta-definition} and \begin{equation}
\label{eq:def-iota}
\iota(u,t)=\sup_{s\in \N}\max_{i=0,\ldots,s-1}\left\{\left|\p(R_{p,i}(u,x_1))\p\big(\cap_{j=2}^\varsigma \{\aa_{u,a_j}^{b_j}\leq x_j\}\big)-\p\big( \cap_{j=2}^\varsigma \{\aa_{u,a_j}^{b_j}\leq x_j\}\cap R_{p,i}(u,x_1)\big)\right|\right\}.
\end{equation}

\end{lemma}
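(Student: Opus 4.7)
The plan is to reduce everything back to the clean event $\RR_{p,0,s}(u,x_1)$ and then invoke the mixing quantity $\iota(u,t)$ summand by summand. Setting $B := \bigcap_{j=2}^\varsigma\{\aa_{u,a_j}^{b_j}\leq x_j\}$, my first move is to replace $\{\aa_{u,0}^s\leq x_1\}$ by $\RR_{p,0,s}(u,x_1)$ in both the joint probability and the marginal $\p(\aa_{u,0}^s\leq x_1)$, using Lemma~\ref{lem:entrances-ball-depth} chained with Lemma~\ref{Lem:disc-ring}. Both lemmas actually bound the probability of the symmetric difference $\{\aa_{u,0}^s\leq x_1\}\triangle\RR_{p,0,s}(u,x_1)$, so intersecting with $B$ only decreases the error. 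Adding their contributions (and noting that Lemma~\ref{Lem:disc-ring}'s $p\,\p(U_{p,0}(u,x_1))$ term is already a summand in $\delta_{p,s,u}(x_1)$), the total replacement cost for both occurrences is bounded by $2[(s-p)\sum_{j=p+1}^{s-1}\p(Q_{p,0}^0(u)\cap\{X_j>u\}) + \delta_{p,s,u}(x_1)]$.

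After this reduction the task is to bound $|\p(\RR_{p,0,s}(u,x_1)\cap B) - \p(\RR_{p,0,s}(u,x_1))\p(B)|$. Using the identity $\RR_{p,0,s}(u,x_1)^c = \bigcup_{i=0}^{s-1} R_{p,i}(u,x_1)$, this equals $|\p(\bigcup_i R_{p,i}\cap B) - \p(\bigcup_i R_{p,i})\p(B)|$. I then apply a two-term Bonferroni inequality to each union: $\bigl|\p(\bigcup_i R_{p,i}) - \sum_i\p(R_{p,i})\bigr| \leq \sum_{0\leq i<j\leq s-1}\p(R_{p,i}\cap R_{p,j})$, and the analogous estimate with $\cap B$ inserted throughout (the right-hand side only shrinks). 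This splits the problem into a \emph{linear} discrepancy $|\sum_i\p(R_{p,i}\cap B) - \sum_i\p(R_{p,i})\p(B)|$ plus at most $2\sum_{i<j}\p(R_{p,i}\cap R_{p,j})$ coming from the pairwise overlaps.

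The linear discrepancy is at most $s\,\iota(u,t)$ immediately from the definition \eqref{eq:def-iota}, since each summand is at most $\iota(u,t)$; the stationarity argument that relates $R_{p,i}$ to shifted events of $R_{p,0}$ is already absorbed into the sup/max built into $\iota(u,t)$, so no further work is needed here. The overlap sum is controlled exactly as in the proof of Lemma~\ref{lem:no-entrances-ring}: between any two entrances into $R_{p,0}(u,x_1)$ at times $i<j$ with $j-i>p$ an escape via $Q_{p,0}^0(u)$ must have occurred, giving $\sum_{i<j}\p(R_{p,i}\cap R_{p,j}) \leq s\sum_{k=p+1}^{s-1}\p(Q_{p,0}^0(u)\cap U_{p,k}(u,x_1))$, which I further majorize using $U_{p,k}(u,x_1)\subset\{X_k>u\}$. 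Summing all the contributions, $2(s-p)+2s\leq 4s$, delivers the claimed bound $s\iota(u,t)+4s\sum_{j=p+1}^{s-1}\p(Q_{p,0}^0(u)\cap\{X_j>u\})+2\delta_{p,s,u}(x_1)$. The $p=0$ case is even simpler because $\II_{0,0,s}=\RR_{0,0,s}$ and $R_{0,i}=U_{0,i}$, so the same three steps go through with the $p=0$ variants of Lemmas~\ref{Lem:disc-ring}--\ref{lem:entrances-ball-depth}. The main point to be careful about, and the only place where something nontrivial happens, is the pairwise-overlap estimate in the third step — but this is precisely the argument that was already executed in Lemma~\ref{lem:no-entrances-ring}, so no new obstacle arises.
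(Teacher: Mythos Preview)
Your proof is correct and follows essentially the same approach as the paper's. The paper introduces the explicit intermediate $(1-s\p(R_{p,0}(u,x_1)))\p(D^{\underline{x}})$ and uses four triangle-inequality pieces (\eqref{eq:approx1}--\eqref{eq:approx4}), whereas you stay at $\RR_{p,0,s}(u,x_1)$ and handle $|\p(\RR\cap B)-\p(\RR)\p(B)|$ directly via complements and Bonferroni; but the lemmas invoked (\ref{Lem:disc-ring}, \ref{lem:no-entrances-ring}, \ref{lem:entrances-ball-depth}), the pairwise-overlap argument, and the final arithmetic are identical.
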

\begin{proof}
Let 
\begin{align*}
A_{x_1,\underline{x}}&:=\{\aa_{u,0}^s\leq x_1, \aa_{u,a_2}^{b_2}\leq x_2, \ldots, \aa_{u,a_\varsigma}^{b_\varsigma}\leq x_\varsigma\},\;\;\qquad B_{x_1}:=\{\aa_{u,0}^s\leq x_1\}\\
\tilde A_{x_1,\underline{x}} &:=\RR_{p,0,s}(u,x_1)\cap\{ \aa_{u,a_2}^{b_2}\leq x_2, \ldots, \aa_{u,a_\varsigma}^{b_\varsigma}\leq x_\varsigma\},\quad
\tilde B_{x_1}:=\RR_{p,0,s}(u,x_1),\\
D^{\underline{x}}&:=\{\aa_{u,a_2}^{b_2}\leq x_2, \ldots, \aa_{u,a_\varsigma}^{b_\varsigma}\leq x_\varsigma\}.
\end{align*}

If $x_1>0$, by Lemmas~\ref{Lem:disc-ring} and \ref{lem:entrances-ball-depth} we have
\begin{align}
\label{eq:approx1}
\left|\p(B_{x_1})-\p(\tilde B_{x_1})\right|\nonumber &\leq \left|\p(\aa_{u,0}^s\leq x_1) -\p(\II_{p,0,s}(u,x_1))\right|+\left|\p(\II_{p,0,s}(u,x_1))-\p(\RR_{p,0,s}(u,x_1))\right|\nonumber\\
&\leq \left|\p(\{\aa_{u,0}^s\leq x_1\}\triangle \II_{p,0,s}(u,x_1))\right|+\left|\p(\RR_{p,0,s}(u,x_1)\setminus\II_{p,0,s}(u,x_1))\right|\nonumber\\
&\leq (s-p)\sum_{j=p+1}^{s-1}\p\left(Q_{p,0}^0(u)\cap \{X_j>u\}\right)+\sum_{\kappa=1}^{\lfloor s/p\rfloor}\kappa p\p(U^{\kappa}_{p,0}(u,x_1))\nonumber\\
&+s\sum_{\kappa=\lfloor s/p\rfloor+1}^{\infty}\p(U^{\kappa}_{p,0}(u,x_1))+p\p(U_{p,0}(u,x_1))\nonumber\\
&\leq s\sum_{j=p+1}^{s-1}\p\left(Q_{p,0}^0(u)\cap \{X_j>u\}\right)+\delta_{p,s,u}(x_1)
\end{align}
and also
{
\fontsize{10}{10}\selectfont
\begin{align}
\label{eq:approx2}
\left|\p(A_{x_1})-\p(\tilde A_{x_1})\right|\nonumber &\leq \left|\p(\{\aa_{u,0}^s\leq x_1\}\cap D^{\underline{x}})-\p(\II_{p,0,s}(u,x_1)\cap D^{\underline{x}})\right|+\left|\p\left((\RR_{p,0,s}(u,x_1)\setminus\II_{p,0,s}(u,x_1))\cap D^{\underline{x}})\right)\right|\nonumber\\
&\leq \left|\p\left((\{\aa_{u,0}^s\leq x_1\}\triangle \II_{p,0,s}(u,x_1))\cap D^{\underline{x}}\right)\right|+\left|\p\left((\RR_{p,0,s}(u,x_1)\setminus\II_{p,0,s}(u,x_1))\cap D^{\underline{x}})\right)\right|\nonumber\\
&\leq \left|\p(\{\aa_{u,0}^s\leq x_1\}\triangle \II_{p,0,s}(u,x_1))\right|+\left|\p(\RR_{p,0,s}(u,x_1)\setminus\II_{p,0,s}(u,x_1))\right|\nonumber\\
&\leq s\sum_{j=p+1}^{s-1}\p\left(Q_{p,0}^0(u)\cap \{X_j>u\}\right)+\delta_{p,s,u}(x_1)
\end{align}
}

If $x_1=0$, we notice that $\{\aa_{u,0}^s\leq x_1\}=\{\aa_{u,0}^s=0\}=\{X_0\leq u,\ldots,X_{s-1}\leq u\}=\II_{p,0,s}(u,0)$, so estimates \eqref{eq:approx1} and \eqref{eq:approx2} are still valid by Lemma~\ref{Lem:disc-ring}.

Using stationarity and adapting the proof of Lemma~\ref{lem:no-entrances-ring}, it follows that

$\left|\p(\tilde A_{x_1,\underline{x}})-(1-s\p(R_{p,0}(u,x_1)))\p(D^{\underline x_1})\right|\leq Err,$ where

\[Err= \left|s\p(R_{p,0}(u,x_1))\p(D^{\underline{x}})-\sum_{i=0}^{s-1}\p(R_{p,i}(u,x_1)\cap D^{\underline{x}})\right|+s\sum_{j=p+1}^{s-1}\p(Q_{p,0}^0(u)\cap U_{p,j}(u,x_1)).\]

Now, since, by definition of $\iota(u,t)$, 
\begin{multline*}\left|s\p(R_{p,0}(u,x_1))\p(D^{\underline x})-\sum_{i=0}^{s-1}\p(R_{p,i}(u,x_1)\cap D^{\underline x})\right|\\\leq\sum_{i=0}^{s-1}\left|\p(R_{p,i}(u,x_1))\p(D^{\underline x})-\p(R_{p,i}(u,x_1)\cap D^{\underline x})\right|\leq s\iota(u,t),\end{multline*}

we conclude that 
\begin{equation}
\label{eq:approx3}
\left|\p(\tilde A_{x_1,\underline{x}})-(1-s\p(R_{p,0}(u,x_1)))\p(D^{\underline x})\right|\leq s\iota(u,t)+s\sum_{j=p+1}^{s-1}\p(Q_{p,0}^0(u)\cap U_{p,j}(u,x_1)).
\end{equation}

Also, by Lemma~\ref{lem:no-entrances-ring} we have
\begin{equation}
\label{eq:approx4}
\left|\p(\tilde B_{x_1})\p(D^{\underline x})-(1-s\p(R_{p,0}(u,x_1)))\p(D^{\underline x})\right|\leq s\sum_{j=p+1}^{s-1}\p(Q_{p,0}^0(u)\cap U_{p,j}(u,x_1)).
\end{equation}

Putting together the estimates \eqref{eq:approx1},\eqref{eq:approx2}, \eqref{eq:approx3} and \eqref{eq:approx4} we get
\begin{align*}
|\p( & A_{x_1,\underline{x}})-\p(B_{x_1})\p(D^{\underline x})|\leq \left|\p( A_{x_1,\underline{x}})-\p( \tilde A_{x_1,\underline{x}})\right|+\left|\p(\tilde A_{x_1,\underline{x}})-(1-s\p(R_{p,0}(u,x_1)))\p(D^{\underline x})\right|\\
&\hspace{2cm}+\left|\p(\tilde B_{x_1})\p(D^{\underline x})-(1-s\p(R_{p,0}(u,x_1)))\p(D^{\underline x})\right|+\left|\p(B_{x_1})-\p(\tilde B_{x_1})\right|\p(D^{\underline x})\\
&\hspace{2cm}\leq s\iota(u,t)+4s\sum_{j=p+1}^{s-1}\p\left(Q_{p,0}^0(u)\cap \{X_j>u\}\right)+2\delta_{p,s,u}(x_1)
\end{align*}
\end{proof}

Let us consider a function $F:(\R_0^+)^n \to \R$ which is continuous on the right in each variable separately and such that for each $R=(a_1,b_1]\times\ldots\times(a_n,b_n]\subset(\R_0^+)^n$ we have
\[\mu_F(R):=\sum_{c_i\in\{a_i,b_i\}}(-1)^{card\{i\in\{1,\ldots,n\}:c_i=a_i\}}F(c_1,\ldots,c_n)\geq 0\]
Such $F$ is called an \emph{$n$-dimensional Stieltjes measure function} and such $\mu_F$ has a unique extension to the Borel $\sigma$-algebra in $(\R_0^+)^n$, which is called the \emph{Lebesgue-Stieltjes measure} associated to $F$.

For each $I\subset\{1,\ldots,n\}$, let $F_I(\underline{x}):=F(\delta_1 x_1,\ldots,\delta_n x_n)$, where $\delta_i=
\begin{cases}
1 & \text{if $i\in I$}\\
0 & \text{if $i\notin I$}
\end{cases}$

If $F$ is an $n$-dimensional Stieltjes measure function, it is easy to see that $F_I$ is also an $n$-dimensional Stieltjes measure function, which has an associated Lebesgue-Stieltjes measure $\mu_{F_I}$. We have the following proposition:

\begin{propositionP}
\label{prop:Laplace}
Given $n\in\N$, $I\subset\{1,\ldots,n\}$ and two functions $F,G:(\R_0^+)^n \to \R$ such that $F$ is a bounded $n$-dimensional Stieltjes measure function, let
\[\int G(\underline{x})dF_I(\underline{x}):=
\begin{cases}
G(0,\ldots,0)F(0,\ldots,0) & \text{for $I=\emptyset$}\\
\int G(\underline{x})d\mu_{F_I} & \text{for $I\neq\emptyset$}
\end{cases}\]
where $\mu_{F_I}$ is the Lebesgue-Stieltjes measure associated to $F_I$. Then,
{
\fontsize{10}{10}\selectfont
\[\int_0^\infty\!\!\!\!\!\!\ldots\!\!\int_0^\infty \e^{-y_1 x_1-\ldots-y_n x_n}F(\underline{x})dx_1\ldots dx_n=\frac{1}{y_1\ldots y_n}\sum_{I\subset\{1,\ldots,n\}}\int \e^{-\sum_{i\in I} y_i x_i}dF_I(\underline{x})\]
}
\end{propositionP}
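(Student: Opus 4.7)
The plan is to prove the identity by induction on $n$, using iterated integration by parts for Lebesgue--Stieltjes integrals together with Fubini's theorem.

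For the base case $n=1$, apply the 1D integration-by-parts formula to $\int_0^\infty e^{-yx}F(x)\,dx$ with $u=F(x)$, $dv=e^{-yx}dx$. Boundedness of $F$ together with $y>0$ kills the boundary term at $+\infty$, while the boundary at $0$ contributes $F(0)/y$, and the Stieltjes integral of $dv \cdot u$ contributes $\tfrac{1}{y}\int_{[0,\infty)}e^{-yx}\,d\mu_F(x)$. This is exactly the claimed formula, with the $I=\emptyset$ term accounting for $F(0,\ldots,0)=F(0)$ and the $I=\{1\}$ term for the Stieltjes integral.

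For the inductive step, assume the result for $n-1$ and apply Fubini to isolate the outer $x_n$-integral:
\[
\text{LHS} = \int_0^\infty e^{-y_n x_n}\left[\int_0^\infty\!\!\cdots\!\int_0^\infty e^{-\sum_{j<n}y_j x_j}F(\underline{x}',x_n)\,dx_1\cdots dx_{n-1}\right]dx_n.
\]
For each fixed $x_n$, the map $\underline{x}'\mapsto F(\underline{x}',x_n)$ is a bounded $(n-1)$-dimensional Stieltjes measure function (which uses non-negativity of $\mu_F$ on $n$-dim boxes plus separate right-continuity). Applying the inductive hypothesis yields a sum of $2^{n-1}$ terms indexed by $I'\subset\{1,\ldots,n-1\}$, each of the form $\tfrac{1}{y_1\cdots y_{n-1}}\int e^{-\sum_{i\in I'} y_i x_i}\,d(F_{I'\cup\{n\}}\text{-section at }x_n)(\underline{x}')$. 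Now view each such expression as a function of $x_n$; by Fubini for Lebesgue--Stieltjes measures together with boundedness of $F$, it is itself a bounded 1D Stieltjes measure function of $x_n$, and the base case applies. This splits each of the $2^{n-1}$ terms into two: a boundary term at $x_n=0$ (which contributes the final-sum index $I=I'$, since $F_{I'\cup\{n\}}(\underline{x}',0)=F_{I'}(\underline{x})$) and an integral term in $x_n$ (which contributes $I=I'\cup\{n\}$). Re-indexing produces the desired sum over all $I\subset\{1,\ldots,n\}$.

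The main obstacle is bookkeeping: rigorously identifying the iterated mixed Stieltjes differentials produced by the induction with the Lebesgue--Stieltjes measures $\mu_{F_I}$ as defined in the paper. This rests on two points: (i) for each fixed value of a subset of the variables, the section of $F$ in the remaining variables is itself a Stieltjes measure function (so that the inductive hypothesis applies), and (ii) Fubini's theorem for Lebesgue--Stieltjes measures identifies the iterated partial Stieltjes differentials with the product measure $\mu_{F_I}$. Both are routine given boundedness and separate right-continuity of $F$, but must be invoked carefully to justify the limit/integral interchanges at each stage.
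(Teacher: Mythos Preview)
Your proposal is correct and follows essentially the same approach as the paper: induction on $n$ with integration by parts as the key tool, splitting each stage into a boundary contribution at $0$ (yielding the subsets $I$ not containing the current variable) and a Stieltjes-integral contribution (yielding those that do). The only cosmetic difference is the order of operations in the inductive step---the paper integrates by parts in $x_n$ first and then applies the inductive hypothesis to the two resulting pieces, whereas you apply the inductive hypothesis to the inner $(n-1)$-fold integral first and then invoke the base case in $x_n$---but the computations and the bookkeeping of the $2^n$ terms are identical.
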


\begin{proof}
We use induction over $n$. For $n=1$, using integration by parts,
\[\int_0^\infty \e^{-y_1 x_1}F(x_1)dx_1=\lim_{a\to\infty}\left[-\frac{\e^{-y_1.a}}{y_1}F(a)+\frac{\e^{-y_1.0}}{y_1}F(0)+\frac{1}{y_1}\int_0^a \e^{-y_1 x_1}dF(x_1)\right]\]
\[=\frac{1}{y_1}\left(F(0)+\int_0^\infty \e^{-y_1 x_1}dF(x_1)\right)=\frac{1}{y_1}\sum_{I\subset\{1\}}\int\e^{-\sum_{i\in I} y_i x_i}dF_I(x_1)\]

For $n>1$, using integration by parts again,
{
\fontsize{10}{10}\selectfont
\begin{align*}
\int_0^\infty\!\!\!\!\!\!\ldots\!\!\int_0^\infty & \e^{-y_1 x_1-\ldots-y_n x_n}F(\underline{x})dx_1\ldots dx_n=\\
&=\lim_{a\to\infty}\int_0^a \e^{-y_n x_n}\left(\int_0^\infty\!\!\!\!\!\!\ldots\!\!\int_0^\infty \e^{-y_1 x_1-\ldots-y_{n-1} x_{n-1}}F(\underline{x})dx_1\ldots dx_{n-1}\right)dx_n\\
&=\lim_{a\to\infty}-\frac{\e^{-y_n a}}{y_n}\int_0^\infty\!\!\!\!\!\!\ldots\!\!\int_0^\infty \e^{-y_1 x_1-\ldots-y_{n-1} x_{n-1}}F(x_1,\ldots,x_{n-1},a)dx_1\ldots dx_{n-1}\\
&\quad+\frac{1}{y_n}\int_0^\infty\!\!\!\!\!\!\ldots\!\!\int_0^\infty \e^{-y_1 x_1-\ldots-y_{n-1} x_{n-1}}F(x_1,\ldots,x_{n-1},0)dx_1\ldots dx_{n-1}\\
&\quad+\frac{1}{y_n}\int_0^\infty \e^{-y_n x_n}d\left(\int_0^\infty\!\!\!\!\!\!\ldots\!\!\int_0^\infty \e^{-y_1 x_1-\ldots-y_{n-1} x_{n-1}}F(\underline{x})dx_1\ldots dx_{n-1}\right)
\end{align*}
}
Since $F$ is bounded, we have
\[\lim_{a\to\infty}-\frac{\e^{-y_n a}}{y_n}\int_0^\infty\!\!\!\!\!\!\ldots\!\!\int_0^\infty \e^{-y_1 x_1-\ldots-y_{n-1} x_{n-1}}F(x_1,\ldots,x_{n-1},a)dx_1\ldots dx_{n-1}=0\]

Assuming that the result is valid for the $n-1$ dimensional functions $f_{x_n}(x_1,\ldots,x_{n-1})=\allowbreak  F(x_1,\ldots,x_{n-1},x_n)$ for every $x_n\geq 0$, we have
{
\fontsize{10}{10}\selectfont
\begin{align*}
\int_0^\infty\!\!\!\!\!\!\ldots\!\!\int_0^\infty &\e^{-y_1 x_1-\ldots-y_{n-1} x_{n-1}}F(x_1,\ldots,x_{n-1},0)dx_1\ldots dx_{n-1}\\
&=\int_0^\infty\!\!\!\!\!\!\ldots\!\!\int_0^\infty \e^{-y_1 x_1-\ldots-y_{n-1} x_{n-1}}f_0(x_1,\ldots,x_{n-1})dx_1\ldots dx_{n-1}\\
&=\frac{1}{y_1\ldots y_{n-1}}\sum_{I\subset\{1,\ldots,n-1\}}\int \e^{-\sum_{i\in I} y_i x_i}d(f_0)_I(x_1,\ldots,x_{n-1})\\
&=\frac{1}{y_1\ldots y_{n-1}}\sum_{I\subset\{1,\ldots,n-1\}}\int \e^{-\sum_{i\in I} y_i x_i}dF_I(\underline{x})
\end{align*}
}
and
{
\fontsize{10}{10}\selectfont
\begin{align*}
\frac{1}{y_n}\int_0^\infty& \e^{-y_n x_n}d\left(\int_0^\infty\!\!\!\!\!\!\ldots\!\!\int_0^\infty \e^{-y_1 x_1-\ldots-y_{n-1} x_{n-1}}F(\underline{x})dx_1\ldots dx_{n-1}\right)\\
&=\frac{1}{y_n}\int_0^\infty \e^{-y_n x_n}d\left(\int_0^\infty\!\!\!\!\!\!\ldots\!\!\int_0^\infty \e^{-y_1 x_1-\ldots-y_{n-1} x_{n-1}}f_{x_n}(x_1,\ldots,x_{n-1})dx_1\ldots dx_{n-1}\right)\\
&=\frac{1}{y_n}\int_0^\infty \e^{-y_n x_n}d\left(\frac{1}{y_1\ldots y_{n-1}}\sum_{J\subset\{1,\ldots,n-1\}}\int \e^{-\sum_{i\in J} y_i x_i}d(f_{x_n})_J(x_1,\ldots,x_{n-1})\right)\\
&=\frac{1}{y_1\ldots y_n}\sum_{J\subset\{1,\ldots,n-1\}}\int_0^\infty \e^{-y_n x_n}d\left(\int\e^{-\sum_{i\in J} y_i x_i}dF_{J\cup\{n\}}(\underline{x})\right)\\
&=\frac{1}{y_1\ldots y_n}\sum_{I\subset\{1,\ldots,n\},n\in I}\int\e^{-\sum_{i\in I} y_i x_i}dF_I(\underline{x})
\end{align*}
}

So, 
{
\fontsize{10}{10}\selectfont
$\displaystyle\int_0^\infty\!\!\!\!\!\!\ldots\!\!\int_0^\infty \e^{-y_1 x_1-\ldots-y_n x_n}F(\underline{x})dx_1\ldots dx_n=\frac{1}{y_1\ldots y_n}\sum_{I\subset\{1,\ldots,n\}}\int\e^{-\sum_{i\in I} y_i x_i}dF_I(\underline{x}).$
}
\end{proof}

\begin{corollaryP}
\label{cor:fgm-main-estimate}
Let $p\in\N_0$, $s,t,\varsigma\in\N$ and consider $y_1,y_2,\ldots,y_\varsigma\in\R_0^+$, $a>0$, $s+t-1<a_2<b_2<a_3<\ldots<b_\varsigma\in\N_0$. For $u$ sufficiently close to $u_F=\varphi(\zeta)$ we have
\[\E\left(\e^{-y_1 a\aa_{u,0}^s-y_2 a\aa_{u,a_2}^{b_2}-\ldots-y_\varsigma a\aa_{u,a_\varsigma}^{b_\varsigma}}\right)=\E\left(\e^{-y_1 a\aa_{u,0}^s}\right)\E\left(\e^{- y_2 a\aa_{u,a_2}^{b_2}-\ldots-y_\varsigma a\aa_{u,a_\varsigma}^{b_\varsigma}}\right)+ Err\]
where
$|Err|\leq s\iota(u,t)+4s\sum_{j=p+1}^{s-1}\p\left(Q_{p,0}^0(u)\cap \{X_j>u\}\right)+2\int_0^{\infty}y_1\e^{-y_1 x}\delta_{p,s,u}(x/a)dx$
and $\iota(u,t)$ is given by \eqref{eq:def-iota} and $\delta_{p,s,u}$ as in \eqref{eq:delta-definition}.
\end{corollaryP}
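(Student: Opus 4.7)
The plan is to represent both Laplace transforms as integrals of the corresponding joint distribution functions against the exponential kernel, then apply Lemma~\ref{prop:main-step} to bound the pointwise discrepancy between the joint distribution and the product of marginals, and finally integrate. Concretely, setting $W_1=\aa_{u,0}^s$, $W_i=\aa_{u,a_i}^{b_i}$ for $i=2,\ldots,\varsigma$, $F(\underline t)=\p(W_1\leq t_1,\ldots,W_\varsigma\leq t_\varsigma)$, $F_1(t_1)=\p(W_1\leq t_1)$ and $F_{\geq 2}(t_2,\ldots,t_\varsigma)=\p(W_2\leq t_2,\ldots,W_\varsigma\leq t_\varsigma)$, I would first iterate the one-dimensional identity $\E(\e^{-cW})=c\int_0^\infty \p(W\leq t)\e^{-ct}\,dt$ (which is exactly what was used in the proof of Corollary~\ref{cor:exponential}) via Fubini, or equivalently apply Proposition~\ref{prop:Laplace}, to obtain
\[
\E\bigl(\e^{-\sum_i y_i a W_i}\bigr)=\prod_{i=1}^\varsigma(y_i a)\int_0^\infty\!\!\cdots\!\!\int_0^\infty F(\underline t)\,\e^{-\sum y_i a t_i}\,dt_1\cdots dt_\varsigma,
\]
together with the analogous identity where $F$ is replaced by $F_1(t_1)\,F_{\geq 2}(t_2,\ldots,t_\varsigma)$ representing $\E(\e^{-y_1 a W_1})\E(\e^{-\sum_{i\geq 2}y_i a W_i})$.

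Subtracting then yields
\[
Err=\prod_{i=1}^\varsigma(y_i a)\int\bigl[F(\underline t)-F_1(t_1)F_{\geq 2}(t_2,\ldots,t_\varsigma)\bigr]\,\e^{-\sum y_i a t_i}\,d\underline t,
\]
and I would apply Lemma~\ref{prop:main-step} to bound the bracket pointwise by $s\iota(u,t)+4s\sum_{j=p+1}^{s-1}\p(Q_{p,0}^0(u)\cap\{X_j>u\})+2\delta_{p,s,u}(t_1)$. Since $\prod_i(y_i a)\int\e^{-\sum y_i a t_i}d\underline t=1$, the first two summands, being constant in $\underline t$, produce exactly the first two terms of the target bound. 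For the $\delta_{p,s,u}(t_1)$ piece the integrand factorises: each of the integrations in $t_2,\ldots,t_\varsigma$ contributes $(y_i a)^{-1}$, cancelling the corresponding prefactor, and a change of variables $x=a t_1$ in the remaining one-dimensional integral transforms $(y_1 a)\int_0^\infty 2\delta_{p,s,u}(t_1)\e^{-y_1 a t_1}\,dt_1$ into $2\int_0^\infty y_1\e^{-y_1 x}\delta_{p,s,u}(x/a)\,dx$, matching the third term of the announced bound.

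The main obstacle is precisely the first step, namely the multivariate integral representation of the Laplace transforms: the distribution functions $F$, $F_1$ and $F_{\geq 2}$ may carry atomic mass on coordinate hyperplanes, and one must ensure that all such boundary contributions (the terms indexed by $I\subsetneq\{1,\ldots,\varsigma\}$ in Proposition~\ref{prop:Laplace}) occur identically in both representations and therefore cancel in the subtraction defining $Err$. This is exactly why Proposition~\ref{prop:Laplace} was proved in the form given, with its full subset-indexed decomposition. Once this representation is in place, the rest of the argument is a routine pointwise bound followed by an integration, entirely parallel to how Lemmas~\ref{Lem:disc-ring}--\ref{lem:entrances-ball-depth} were combined in the proof of Corollary~\ref{cor:exponential}.
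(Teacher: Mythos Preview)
Your proposal is correct and follows essentially the same route as the paper's proof: both represent the two Laplace transforms as $\prod_i(y_i a)\int \e^{-\sum y_i a t_i}F(\underline t)\,d\underline t$ (via Proposition~\ref{prop:Laplace}), subtract to obtain $Err$ as the corresponding integral of $F-F_1F_{\geq 2}$, bound the integrand pointwise using Lemma~\ref{prop:main-step}, and integrate out, with the change of variables $x=at_1$ producing the $\delta_{p,s,u}(x/a)$ term. One small remark: the boundary terms indexed by $I\subsetneq\{1,\ldots,\varsigma\}$ in Proposition~\ref{prop:Laplace} do not need to ``cancel in the subtraction''; rather, that proposition shows that each Laplace transform \emph{individually} equals the corresponding multiple integral of the distribution function (equivalently, this follows directly from Fubini applied to $\prod_i\int_0^\infty c_i\e^{-c_i t_i}\I_{W_i\le t_i}\,dt_i=\e^{-\sum c_iW_i}$), so no matching of boundary pieces is required.
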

\begin{proof}
Using the same notation as in the proof of Lemma~\ref{prop:main-step}, let $F^{(A)}(x_1,\ldots,x_\varsigma)=\p(A_{x_1,\underline{x}})$, $F^{(B)}(x_1)=\p(B_{x_1})$ and $F^{(D)}(x_2,\ldots,x_\varsigma)=\p(D^{\underline{x}})$. Then, $F^{(A)}$, $F^{(B)}$ and $F^{(D)}$ are both bounded Stieltjes measure functions, with
\begin{align*}
\mu_{F^{(A)}}(U_1)&=\p\left(( a\aa_{u,0}^s, a\aa_{u,a_2}^{b_2},\ldots, a\aa_{u,a_\varsigma}^{b_\varsigma})\in U_1\right)\\
\mu_{F^{(B)}}(U_2)&=\p( a\aa_{u,0}^s\in U_2)\qquad
\mu_{F^{(D)}}(U_3)=\p\left(( a\aa_{u,a_2}^{b_2},\ldots, a\aa_{u,a_\varsigma}^{b_\varsigma})\in U_3\right)
\end{align*}
where $U_1$, $U_2$ and $U_3$ are Borel sets in $(\R_0^+)^\varsigma$, $\R_0^+$ and $(\R_0^+)^{\varsigma-1}$, respectively.

Therefore, we can apply the previous proposition and we obtain
{
\fontsize{10}{10}\selectfont
\begin{align*}
&\E\left(\e^{-y_1 a\aa_{u,0}^s-y_2 a\aa_{u,a_2}^{b_2}-\ldots-y_\varsigma a\aa_{u,a_\varsigma}^{b_\varsigma}}\right)-\E\left(\e^{-y_1 a\aa_{u,0}^s}\right)\E\left(\e^{- y_2 a\aa_{u,a_2}^{b_2}-\ldots-y_\varsigma a\aa_{u,a_\varsigma}^{b_\varsigma}}\right)\\
&=\sum_{I\subset\{1,\ldots,\varsigma\}}\int\e^{-\sum_{i\in I}y_i a x_i}d(F^{(A)})_I(x_1,\ldots,x_\varsigma)\\
&\quad-\sum_{I\subset\{1\}}\int\e^{-\sum_{i\in I}y_i a x_i}d(F^{(B)})_I(x_1)\sum_{I\subset\{2,\ldots,\varsigma\}}\int\e^{-\sum_{i\in I}y_i a x_i}d(F^{(D)})_I(x_2,\ldots,x_\varsigma)\\
&=y_1\ldots y_{\varsigma}a^{\varsigma}\int_0^\infty\!\!\!\!\!\!\ldots\!\!\int_0^\infty\e^{-y_1 a x_1-\ldots-y_{\varsigma} a  x_{\varsigma}}F^{(A)}(x_1,\ldots,x_\varsigma)dx_1\ldots dx_{\varsigma}\\
&\quad-\left(y_1 a\int_0^\infty\e^{-y_1 a x_1}F^{(B)}(x_1)dx_1\right)\left(y_2\ldots y_{\varsigma}a^{\varsigma-1}\int_0^\infty\!\!\!\!\!\!\ldots\!\!\int_0^\infty\e^{-y_2 a x_2-\ldots-y_{\varsigma} a x_{\varsigma}}F^{(D)}(x_2,\ldots,x_\varsigma)dx_2\ldots dx_{\varsigma}\right)\\
&=y_1\ldots y_{\varsigma}a^{\varsigma}\int_0^\infty\!\!\!\!\!\!\ldots\!\!\int_0^\infty\e^{-y_1 a x_1-\ldots-y_{\varsigma} a  x_{\varsigma}}(F^{(A)}-F^{(B)}F^{(D)})(x_1,\ldots,x_\varsigma)dx_1\ldots dx_{\varsigma}
\end{align*}
}
Hence, using Lemma~\ref{prop:main-step} and the change of variables $x=a x_1$,
\begin{multline*}
\left|\E\left(\e^{-y_1 a\aa_{u,0}^s-y_2 a\aa_{u,a_2}^{b_2}-\ldots-y_\varsigma a\aa_{u,a_\varsigma}^{b_\varsigma})}\right)-\E\left(\e^{-y_1 a\aa_{u,0}^s}\right)\E\left(\e^{- y_2 a\aa_{u,a_2}^{b_2}-\ldots-y_\varsigma a\aa_{u,a_\varsigma}^{b_\varsigma}}\right)\right|\\
\leq y_1\ldots y_{\varsigma}a^{\varsigma}\int_0^\infty\!\!\!\!\!\!\ldots\!\!\int_0^\infty\e^{-y_1 a x_1-\ldots-y_{\varsigma} a  x_{\varsigma}}\left|\p(A_{x_1,\underline{x}})-\p(B_{x_1})\p(D^{\underline{x}})\right|dx_1\ldots dx_{\varsigma}\\
\leq s\iota(u,t)+4s\sum_{j=p+1}^{s-1}\p\left(Q_{p,0}^0(u)\cap\{X_j>u\}\right)+2\int_0^\infty y_1\e^{-y_1 x}\delta_{p,s,u}(x/a)dx
\end{multline*}
\end{proof}

\begin{propositionP}
\label{prop:Laplace-estimates}
Let $X_0, X_1, \ldots$ be given by \eqref{eq:def-stat-stoch-proc-DS}, where $\varphi$ achieves a global maximum at the point $\zeta$. Let $(u_n)_{n\in\N}$ be a sequence satisfying \eqref{eq:un} and $(a_n)_{n\in\N}$ a normalising sequence. Assume that conditions $\D_p(u_n)^*$, $\D'_p(u_n)^*$ and $U\!LC_p(u_n)$ hold for some $p\in\N_0$. Let $J\in\RR$ be such that $J=\bigcup_{\ell=1}^\varsigma I_\ell$ where $I_j=[a_j,b_j)\in\S$,
$j=1,\ldots,\varsigma$ and $a_1<b_1<a_2<\cdots<b_{\varsigma-1}<a_\varsigma<b_\varsigma$.
Then, for all $y_1,y_2,\ldots,y_\varsigma\in\R_0^+$, we have
\[\E\left(\e^{-\sum_{\ell=1}^\varsigma y_\ell a_n\aa_{u_n}(nI_\ell)}\right)-\prod_{\ell=1}^\varsigma \E^{k_n|I_\ell|}\left(\e^{-y_\ell a_n\aa_{u_n,0}^{\lfloor n/k_n\rfloor}}\right)\xrightarrow[n\to\infty]{}0\]
\end{propositionP}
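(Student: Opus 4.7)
The plan is to iteratively apply Corollary~\ref{cor:fgm-main-estimate}, which detaches a single block from the joint Laplace transform at the price of three explicit error terms. I would first discretise each scaled interval $nI_\ell$ into $k_n|I_\ell|$ blocks separated by safety gaps of length $t_n$, then peel off the blocks one at a time, and finally verify that the accumulated errors vanish under the standing hypotheses.

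\textbf{Block decomposition.} Set $s_n:=\lfloor n/k_n\rfloor$ and let $(t_n)$ be the sequence supplied by $\D_p(u_n)^*$. Partition each $nI_\ell\cap\N_0$ into $k_n|I_\ell|$ consecutive blocks $B_{\ell,i}$ of length $s_n$, with a gap of length $t_n$ separating consecutive blocks (both within each $I_\ell$ and across the junctions $I_\ell/I_{\ell+1}$). Since $k_nt_n=o(n)$, these blocks plus gaps fit inside $nI_\ell$ up to a terminal remainder of length $o(s_n)$. Writing $N_n:=\sum_\ell k_n|I_\ell|$ for the total number of blocks, the first reduction replaces $\aa_{u_n}(nI_\ell)$ by $\sum_i\aa_{u_n}(B_{\ell,i})$: in the AOT and REPP cases this discards exactly $\aa_{u_n}$ of the gaps, and using $|\e^{-x}-\e^{-y}|\le|x-y|$ the induced error in the Laplace transform is bounded by $a_n\,\E[\aa_{u_n}(\text{total gap})]\lesssim (k_nt_n/n)\cdot\tau=o(1)$ by Kac's rescaling; in the POT case a cluster straddling a boundary can be split, but the extra error is again $O(k_nt_n/n)=o(1)$ using $\D'_p(u_n)^*$ together with the monotonicity properties in Remark~\ref{rem:general-marks}.

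\textbf{Iterated factorisation and error accounting.} I would then apply Corollary~\ref{cor:fgm-main-estimate} exactly $N_n-1$ times, each time peeling off the leftmost remaining block from the union of all subsequent blocks (which sit at distance $\ge t_n$ to its right), with weight $y_\ell$ inherited from the interval that contains the detached block. By stationarity each peel produces the factor $\E(\e^{-y_\ell a_n\aa_{u_n,0}^{s_n}})$, and upon full iteration the product $\prod_\ell\E^{k_n|I_\ell|}(\e^{-y_\ell a_n\aa_{u_n,0}^{s_n}})$ appears. The accumulated error is the sum of three pieces: the long range part is $\le N_ns_n\iota(u_n,t_n)\le n\gamma(n,t_n)\to 0$ by $\D_p(u_n)^*$; the short range correlation part is $\lesssim n\sum_{j=p+1}^{s_n-1}\p(Q_{p,0}^0(u_n)\cap\{X_j>u_n\})\to 0$ by $\D'_p(u_n)^*$; and the cluster-tail part equals $2N_n\int_0^\infty y_\ell\e^{-y_\ell x}\delta_{p,s_n,u_n}(x/a_n)\,dx=O(k_n/n)=o(1)$, since $U\!LC_p(u_n)$ bounds $n$ times this integral by $O(1)$ while $k_n=o(n)$ (as $k_nt_n=o(n)$ and $t_n\ge 1$).

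\textbf{Main obstacle.} The most delicate point is the reduction step in the POT case: when a cluster straddles the boundary between a block and a gap, the peak of the original cluster need not equal the larger of the two peaks of its pieces, so additivity of $\aa_{u_n}$ fails by an amount that must be shown to disappear inside the Laplace transform. Controlling this requires combining $\D'_p(u_n)^*$, to bound the probability of such a straddling exceedance, with $U\!LC_p(u_n)$, to dominate the expected excess carried across the boundary; the uniform monotonicity axioms listed in Remark~\ref{rem:general-marks} are what allow the same proof to handle the AOT, POT and REPP mark functions simultaneously.
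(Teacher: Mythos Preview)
Your overall architecture---partition $nI_\ell$ into $\approx k_n|I_\ell|$ blocks of length $s_n=\lfloor n/k_n\rfloor$ separated by gaps $t_n$, then peel blocks off one by one via Corollary~\ref{cor:fgm-main-estimate}---is exactly the paper's route, and your accounting of the three iterated error terms (the $\gamma(n,t_n)$ term via $\D_p(u_n)^*$, the short-range sum via $\D'_p(u_n)^*$, and the $\delta$-integral via $U\!LC_p(u_n)$) is correct.

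The reduction step, however, contains a real gap. You bound the cost of discarding the gaps via $|\e^{-x}-\e^{-y}|\le|x-y|$, obtaining $y_\ell a_n\E[\aa_{u_n}(\text{total gap})]$, and then claim this is $\lesssim (k_nt_n/n)\tau$ ``by Kac's rescaling''. That estimate is valid for the REPP marks (where $a_n\aa_{u_n}$ counts exceedances), but for AOT it reads
\[
a_n\E[\aa_{u_n}(\text{gap})]=|\text{gap}|\int_0^\infty \p(X_0>u_n+x/a_n)\,dx,
\]
and nothing in $\D_p(u_n)^*$, $\D'_p(u_n)^*$, $U\!LC_p(u_n)$ or $n\p(X_0>u_n)\to\tau$ controls this integral: in the Fr\'echet domain with $\beta\le1$ it is infinite for every $n$. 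The proposition is stated under only those abstract conditions, so a first-moment bound on the excesses is simply not available.

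The paper sidesteps this by using instead $|\e^{-x}-\e^{-y}|\le 1-\e^{-|x-y|}$, which bounds the reduction error by $\E\big(1-\e^{-y_\ell a_n\aa_{u_n}(\text{gap})}\big)$, and then applies Corollary~\ref{cor:exponential} to that quantity. This yields
\[
|\text{gap}|\int_0^\infty y_\ell\e^{-y_\ell x}\p(R_{p,0}(u_n,x/a_n))\,dx+\text{(errors handled by $\D'_p$, $U\!LC_p$)},
\]
and since $\p(R_{p,0}(u_n,x/a_n))\le\p(U(u_n))\sim\tau/n$ \emph{uniformly in $x$}, the main term is $\le|\text{gap}|\,\p(U(u_n))$, giving the desired $O(k_nt_n/n)=o(1)$ after summing over the $O(k_n)$ gaps---with no moment hypothesis on the excesses whatsoever. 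The same mechanism absorbs what you call the ``main obstacle'' in the POT case: one never has to control the raw size of an excess carried across a block boundary, only its contribution to the Laplace transform, and Corollary~\ref{cor:exponential} bounds that uniformly. So the straddling issue is not a separate difficulty once the reduction is done through $1-\e^{-(\cdot)}$ rather than through first moments.
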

\begin{proof}
Without loss of generality, we can assume that $y_1,y_2,\ldots,y_\varsigma\in\R^+$, because if we had $y_j=0$ for some $j=1,\ldots,\varsigma$ then we could consider $J=\bigcup_{\ell=1}^{j-1}I_\ell\cup\bigcup_{\ell=j+1}^\varsigma I_\ell$ instead.
Let $h:=\inf_{j\in \{1,\ldots,\varsigma\}}\{b_j-a_j\}$, $H:=\lceil\sup\{x:x\in J\}\rceil=\lceil b_\varsigma\rceil$, $\hat y:=\inf\{y_j:j=1,\ldots,\varsigma\}>0$ and $\hat Y:=\sup\{y_j:j=1,\ldots,\varsigma\}$. Let $n$ be sufficiently large so that, in particular, $k_n>2/h$ and set $\varrho_n:=\lfloor n/k_n\rfloor$. We consider the following partition of $n[0,H]\cap{\mathbb Z}$ into blocks of length $\varrho_n$, $J_1=[0,\varrho_n)$, $J_2=[\varrho_n,2\varrho_n)$, ..., $J_{Hk_n}=[(Hk_n-1)\varrho_n,Hk_n\varrho_n)$, $J_{Hk_n+1}=[Hk_n\varrho_n,Hn)$. We further cut each $J_i$ into two blocks:
\[J_i^*:=[(i-1)\varrho_n,i\varrho_n-t_n)\; \mbox{ and } J_i':=J_i-J_i^*\]
Note that $|J_i^*|=\varrho_n-t_n$ and $|J_i'|=t_n$.

Let $\mathscr S_\ell=\mathscr S_\ell(k)$ be the number of blocks $J_j$ contained in $n I_\ell$, that is,
\[\mathscr S_\ell:=\#\{j\in \{1,\ldots,Hk_n\}:J_j\subset n I_\ell\}\]
By assumption on the relation between $k_n$ and $h$, we have $\mathscr S_\ell>1$ for every $\ell\in \{1,\ldots,\varsigma\}$.
For each such $\ell$, we also define $i_\ell:=\min\{j\in \{1,\ldots,k\}:J_j\subset n I_{\ell}\}$.
Hence, it follows that $J_{i_\ell},J_{i_\ell+1},\ldots,J_{i_\ell+\mathscr S_\ell-1}\subset n I_\ell$. Moreover, by choice of the size of each block we have that
\begin{equation}
\label{eq:Sl-estimate}
\mathscr S_\ell\sim k_n|I_\ell|
\end{equation}

First of all, recall that for every $0\leq x_i, z_i\leq 1$, we have
\begin{equation}
\label{eq:inequality}
\left|\prod x_i-\prod z_i\right|\leq \sum |x_i-z_i|.
\end{equation}

We start by making the following approximation, in which we use \eqref{eq:inequality} and stationarity,
{
\fontsize{10}{10}\selectfont
\begin{align*}
\Bigg|\E\left(\e^{-\sum_{\ell=1}^\varsigma y_\ell a_n\aa_{u_n}(nI_\ell)}\right)&-\E\left(\e^{-\sum_{\ell=1}^\varsigma y_\ell\sum_{j=i_\ell}^{i_\ell+\mathscr S_\ell-1}a_n\aa_{u_n}(J_j)}\right)\Bigg|\leq\E\left(1-\e^{-\sum_{\ell=1}^\varsigma y_\ell a_n\aa_{u_n}\left(nI_\ell\setminus\cup_{j=i_\ell}^{i_\ell+\mathscr S_\ell-1}J_j\right)}\right)\\
&\leq \E\left(1-\e^{-2\sum_{\ell=1}^\varsigma y_\ell a_n\aa_{u_n}(J_1)}\right)
\leq 2\varsigma K\E\left(1-\e^{-a_n\aa_{u_n}(J_1)}\right),
\end{align*}
}

where $\max\{y_1,\ldots,y_\varsigma\}\leq K\in\N$.
In order to show that we are allowed to use the above approximation we just need to check that $\E\left(1-\e^{-a_n\aa_{u_n}(J_1)}\right)\to0$ as $n\to\infty$.
By Corollary~\ref{cor:exponential} we have 
\begin{equation}
\label{eq:error1}
\E\left(\e^{-a_n\aa_{u_n}(J_1)}\right)=1-\varrho_n\int_0^{\infty}\e^{-x}\p(R_{p,0}(u_n,x/a_n))dx+Err,
\end{equation}
where 
\[\left|Err\right|\leq 2\varrho_n\sum_{j=p+1}^{\varrho_n-1}\p\left(Q_{p,0}^0(u_n)\cap \{X_j>u_n\}\right)+\int_0^{\infty}\e^{-x}\delta_{p,\varrho_n,u_n}(x/a_n)dx \to 0\]
as $n\to\infty$ by $\D'_p(u_n)^*$ and $U\!LC_p(u_n)$.
Since $\int_0^{\infty}\e^{-x}\p(R_{p,0}(u_n,x/a_n))dx \leq \int_0^{\infty}\e^{-x}\p(U(u_n))dx=\p(U(u_n))$ we get $\E\left(\e^{-a_n\aa_{u_n}(J_1)}\right)\xrightarrow[n\to\infty]{}1$ by \eqref{eq:un}.

Now, we proceed with another approximation which consists of replacing $J_j$ by $J_j^*$. Using \eqref{eq:inequality}, stationarity and \eqref{eq:Sl-estimate}, we have
\begin{align*}
\Bigg|\E\left(\e^{-\sum_{\ell=1}^\varsigma y_\ell\sum_{j=i_\ell}^{i_\ell+\mathscr S_\ell-1}a_n\aa_{u_n}(J_j)}\right)&-\E\left(\e^{-\sum_{\ell=1}^\varsigma y_\ell\sum_{j=i_\ell}^{i_\ell+\mathscr S_\ell-1}a_n\aa_{u_n}(J_j^*)}\right)\Bigg|\leq 
\E\left(1-\e^{-\sum_{\ell=1}^\varsigma y_\ell\mathscr S_\ell a_n\aa_{u_n}(J'_1)}\right)\\
&\leq K\sum_{\ell=1}^\varsigma \mathscr S_\ell \E\left(1-\e^{-a_n\aa_{u_n}(J'_1)}\right)
\lesssim KHk_n\E\left(1-\e^{-a_n\aa_{u_n}(J'_1)}\right),
\end{align*}
and we must show that $k_n\E\left(1-\e^{-a_n\aa_{u_n}(J'_1)}\right)\to0,$ as $n\to\infty$, in order for the approximation to make sense. By Corollary~\ref{cor:exponential} we have 
\begin{equation}
\label{eq:error1}
\E\left(\e^{-a_n\aa_{u_n}(J'_1)}\right)=1-t_n\int_0^{\infty}\e^{-x}\p(R_{p,0}(u_n,x/a_n))dx+Err,
\end{equation}
where 
\[k_n\left|Err\right|\leq 2k_n t_n\sum_{j=p+1}^{t_n-1}\p\left(Q_{p,0}^0(u_n)\cap \{X_j>u_n\}\right)+k_n\int_0^{\infty}\e^{-x}\delta_{p,t_n,u_n}(x/a_n)dx \to 0\]

as $n\to\infty$ by $\D'_p(u_n)^*$ and $U\!LC_p(u_n)$. We get, by \eqref{eq:un} as well,
\begin{equation}
\label{eq:error}
k_n\E\left(1-\e^{-a_n\aa_{u_n}(J'_1)}\right)\sim k_n t_n\int_0^{\infty}\e^{-x}\p(R_{p,0}(u_n,x/a_n))dx\xrightarrow[n\to\infty]{}0
\end{equation}

Let us fix now some $\hat\ell\in\{1,\ldots,\varsigma\}$ and $i\in\{i_{\hat\ell},\ldots,i_{\hat\ell}+\mathscr S_{\hat\ell}-1\}$. Let $M_i=y_{\hat\ell}\sum_{j=i}^{i_{\hat\ell}+\mathscr S_{\hat\ell}-1}a_n\aa_{u_n}(J_j^*)$ and $L_{\hat\ell}=\sum_{\ell=\hat\ell+1}^\varsigma y_\ell\sum_{j=i_{\ell}}^{i_\ell+\mathscr S_\ell-1}a_n\aa_{u_n}(J_j^*).$
Using stationarity and Corollary~\ref{cor:fgm-main-estimate} along with the facts that $\iota(u_n,t)\le\gamma(n,t)$ and $y_{\hat\ell}\e^{-y_{\hat\ell}x}\le \hat Y\e^{-\hat y x}$, we obtain
\begin{equation*}
\left|\E\left(\e^{-y_{\hat\ell}a_n\aa_{u_n}(J_{i_{\hat\ell}}^*)-M_{i_{\hat\ell}+1}-L_{\hat\ell}}\right)-\E\left(\e^{-y_{\hat\ell}a_n\aa_{u_n}(J_1^*)}\right)\E\left(\e^{-M_{i_{\hat\ell}+1}-L_{\hat\ell}}\right) \right|\leq \Upsilon_n,
\end{equation*}
where 
\[\Upsilon_n=\varrho_n\gamma(n,t_n)+4\varrho_n\sum_{j=p+1}^{\varrho_n-1}\p\left(Q_{p,0}^0(u_n)\cap \{X_j>u_n\}\right)+2\hat Y\int_0^{\infty}\e^{-\hat y x}\delta_{p,\varrho_n,u_n}(x/a_n)dx\]

Since $\E\left(\e^{-y_{\hat\ell}a_n\aa_{u_n}(J_1^*)}\right)\leq 1$, it follows by the same argument that
\begin{align*}
\Big|\E\Big(\e^{-M_{i_{\hat\ell}}-L_{\hat\ell}}\Big)-&\E^2\Big(\e^{-y_{\hat\ell}a_n\aa_{u_n}(J_1^*)}\Big)\E\Big(\e^{-M_{i_{\hat\ell}+2}-L_{\hat\ell}}\Big)\Big|\\ 
&\leq\Big|\E\Big(\e^{-M_{i_{\hat\ell}}-L_{\hat\ell}}\Big)-\E\Big(\e^{-y_{\hat\ell}a_n\aa_{u_n}(J_1^*)}\Big)\E\Big(\e^{-M_{i_{\hat\ell}+1}-L_{\hat\ell}}\Big) \Big|\\
&+\E\Big(\e^{-y_{\hat\ell}a_n\aa_{u_n}(J_1^*)}\Big)\Big|\E\Big(\e^{-M_{i_{\hat\ell}+1}-L_{\hat\ell}}\Big)-\E\Big(\e^{-y_{\hat\ell}a_n\aa_{u_n}(J_1^*)}\Big)\E\Big(\e^{-M_{i_{\hat\ell}+2}-L_{\hat\ell}}\Big)\Big|\\
&\leq 2\Upsilon_n,
\end{align*}
Hence, proceeding inductively with respect to $i\in\{i_{\hat\ell},\ldots,i_{\hat\ell}+\mathscr S_{\hat\ell}-1\}$, we obtain
\[
\Big|\E\Big(\e^{-M_{i_{\hat\ell}}-L_{\hat\ell}}\Big)-\E^{\mathscr S_{\hat\ell}}\Big(\e^{-y_{\hat\ell}a_n\aa_{u_n}(J_1^*)}\Big)\E\Big(\e^{-L_{\hat\ell}}\Big)\Big|\leq \mathscr S_{\hat\ell}\Upsilon_n
\]
In the same way, if we proceed inductively with respect to $\hat\ell\in\{1,\ldots,\varsigma\}$, we get
\[
\left|\E\left(\e^{-\sum_{\ell=1}^\varsigma y_\ell\sum_{j=i_\ell}^{i_\ell+\mathscr S_\ell-1}a_n\aa_{u_n}(J_j^*)}\right)-\prod_{\ell=1}^\varsigma \E^{\mathscr S_{\ell}}\Big(\e^{-y_{\ell}a_n\aa_{u_n}(J_1^*)}\Big)\right|\leq \sum_{\ell=1}^\varsigma \mathscr S_{\ell}\,\Upsilon_n.
\]
By \eqref{eq:Sl-estimate}, we have $\sum_{\ell=1}^\varsigma \mathscr S_{\ell}\,\Upsilon_n\lesssim H k_n\Upsilon_n$ and
\begin{align*}
k_n\Upsilon_n &=k_n\varrho_n\gamma(n,t_n)+4k_n\varrho_n\sum_{j=p+1}^{\varrho_n-1}\p\left(Q_{p,0}^0(u_n)\cap\{X_j>u_n\}\right)+2k_n\hat Y\int_0^{\infty}\e^{-\hat y x}\delta_{p,\varrho_n,u_n}(x/a_n)dx\\
&\sim n\gamma(n,t_n)+4n\sum_{j=p+1}^{\varrho_n-1}\p\left(Q_{p,0}^0(u_n)\cap\{X_j>u_n\}\right)+\frac{2\hat Y}{\hat y}k_n\int_0^{\infty}\hat y\e^{-\hat y x}\delta_{p,\varrho_n,u_n}(x/a_n)dx\\
&\to 0,\quad \text{as $n\to\infty$, by $\D_p(u_n)^*$, $\D'_p(u_n)^*$ and $U\!LC_p(u_n)$.}
\end{align*}
Using \eqref{eq:inequality} and stationarity, again, we have the final approximation
\begin{align*}
\left|\prod_{\ell=1}^\varsigma \E^{\mathscr S_{\ell}}\Big(\e^{-y_{\ell}a_n\aa_{u_n}(J_1)}\Big)-\prod_{\ell=1}^\varsigma \E^{\mathscr S_{\ell}}\Big(\e^{-y_{\ell}a_n\aa_{u_n}(J_1^*)}\Big)\right| 
&\lesssim KHk_n\E\left(1-\e^{-a_n\aa_{u_n}(J'_1)}\right).
\end{align*}
Since in \eqref{eq:error} we have already proved that $k_n\E\left(1-\e^{-a_n\aa_{u_n}(J'_1)}\right)\to 0$ as $n\to\infty$, we only need to gather all the approximations and recall \eqref{eq:Sl-estimate} to finally obtain the stated result.
\end{proof}

\begin{proof}[Proof of Theorem~\ref{thm:convergence}]
In order to prove convergence of $a_n A_n$ to a process $A$, it is sufficient to show that for any $\varsigma$ disjoint intervals $I_1, I_2,\ldots, I_\varsigma\in\S$, the joint distribution of $a_n A_n$ over these intervals converges to the joint distribution of $A$ over the same intervals, \ie 
\[
(a_n A_n(I_1),a_n A_n(I_2),\ldots,a_n A_n(I_\varsigma))\xrightarrow[n\to\infty]{}(A(I_1), A(I_2), \ldots, A(I_\varsigma)),
\]
which will be the case if the corresponding joint Laplace transforms converge. Hence, we only need to show that 
\[
\psi_{a_n A_n}(y_1, y_2,\ldots,y_\varsigma)\to \psi_A(y_1, y_2,\ldots,y_\varsigma)=\E\left(\e^{-\sum_{\ell=1}^\varsigma y_\ell A(I_\ell)}\right), \quad \text{as $n\to\infty$,}
\] 
for every $\varsigma$ non-negative values $y_1, y_2,\ldots,y_\varsigma$, each choice of $\varsigma$ disjoint intervals $I_1, I_2,\ldots, I_\varsigma\in\S$ and each $\varsigma\in\N$. Note that $\psi_{a_n A_n}(y_1, y_2,\ldots,y_\varsigma)=\E\left(\e^{-\sum_{\ell=1}^\varsigma y_\ell a_n A_n(I_\ell)}\right)=\E\left(\e^{-\sum_{\ell=1}^\varsigma y_\ell a_n\aa_{u_n}(v_nI_\ell)}\right)$ and
{
\fontsize{10}{10}\selectfont
\begin{align*}
\left|\E\left(\e^{-\sum_{\ell=1}^\varsigma y_\ell a_n \aa_{u_n}(v_n I_\ell)}\right)-\E\left(\e^{-\sum_{\ell=1}^\varsigma y_\ell A(I_\ell)}\right)\right|&\leq \left|\E\left(\e^{-\sum_{\ell=1}^\varsigma y_\ell a_n\aa_{u_n}(v_nI_\ell)}\right)- \prod_{\ell=1}^\varsigma \E^{k_n\frac{v_n}{n}|I_\ell|}\left(\e^{-y_\ell a_n\aa_{u_n,0}^{\lfloor n/k_n\rfloor}}\right)\right|\\
&+\left|\prod_{\ell=1}^\varsigma \E^{k_n\frac{v_n}{n}|I_\ell|}\left(\e^{-y_\ell a_n\aa_{u_n,0}^{\lfloor n/k_n\rfloor}}\right)-\E\left(\e^{-\sum_{\ell=1}^\varsigma y_\ell A(I_\ell)}\right)\right|
\end{align*}
}
By Proposition~\ref{prop:Laplace-estimates}, the first term on the right goes to $0$ as $n\to\infty$. Also, by Corollary~\ref{cor:exponential}, we have
\begin{align*}
\E\left(\e^{-y a_n\aa_{u_n,0}^{\lfloor n/k_n\rfloor}}\right)&=1-\lfloor n/k_n\rfloor\int_0^{\infty}y\e^{-yx}\p(R_{p,0}(u_n,x/a_n))dx+Err,
\end{align*}
where 
\[|Err|\leq \frac{2n}{k_n}\sum_{j=p+1}^{\lfloor n/k_n\rfloor-1}\p\left(Q_{p,0}^0(u_n)\cap \{X_j>u_n\}\right)+
\int_0^{\infty}y\e^{-yx}\delta_{p,\lfloor n/k_n\rfloor,u_n}(x/a_n)dx\]
Since, by $\D'_p(u_n)^*$ and $U\!LC_p(u_n)$, we have that $k_n|Err|\to 0$ as $n\to\infty$, it follows that
\[\E^{k_n}\left(\e^{-y a_n\aa_{u_n,0}^{\lfloor n/k_n\rfloor}}\right)\sim\left(1-\frac{n}{k_n}\int_0^{\infty}y\e^{-yx}\p(R_{p,0}(u_n,x/a_n))dx\right)^{k_n}\sim\e^{-n\int_0^{\infty}y\e^{-yx}\p(R_{p,0}(u_n,x/a_n))dx},\]
as $n\to\infty$.

Hence,
\begin{align*}
\prod_{\ell=1}^\varsigma& \E^{k_n\frac{v_n}{n}|I_\ell|}\left(\e^{-y_\ell a_n\aa_{u_n,0}^{\lfloor n/k_n\rfloor}}\right)\sim \prod_{\ell=1}^\varsigma\left(\e^{-n\int_0^{\infty}y_\ell\e^{-y_\ell x}\p(R_{p,0}(u_n,x/a_n))dx}\right)^{\frac{v_n}{n}|I_\ell|}\\
&=\e^{-v_n\sum_{\ell=1}^\varsigma|I_\ell|\int_0^{\infty}y_\ell\e^{-y_\ell x}\p(R_{p,0}(u_n,x/a_n))dx}=\e^{-\sum_{\ell=1}^\varsigma|I_\ell|\int_0^{\infty}y_\ell\e^{-y_\ell x}\frac{\p(R_{p,0}(u_n,x/a_n))}{P(U(u_n))}dx}
\end{align*}
{
\fontsize{10}{10}\selectfont
\begin{align*}
\lim_{n\to\infty}\int_0^{\infty}y\e^{-y x}\frac{\p(R_{p,0}(u_n,x/a_n))}{P(U(u_n))}dx&=\int_0^{\infty}y\e^{-y x}\theta(1-\pi(x))dx
=\theta\left(1-\pi(0)-\int_0^{\infty}\e^{-y x}d\pi(x)\right)\\
&
=\theta(1-\phi(y))
\end{align*}
}
where $\phi$ is the Laplace transform of $\pi$, and
\[\lim_{n\to\infty}\e^{-\sum_{\ell=1}^\varsigma|I_\ell|\int_0^{\infty}y_\ell\e^{-y_\ell x}\frac{\p(R_{p,0}(u_n,x/a_n))}{P(U(u_n))}dx}=\e^{-\theta\sum_{\ell=1}^\varsigma|I_\ell|(1-\phi(y_l))}=\E\left(\e^{-\sum_{\ell=1}^\varsigma y_\ell A(I_\ell)}\right)\]
where $A$ is a compound Poisson process of intensity $\theta$ and multiplicity d.f. $\pi$.
\end{proof}

\section{Convergence of random measures for induced and original systems}
\label{sec:induced-original}

In this section we prove Theorem~\ref{thm:mpp_ret_orig}. We start by settling notation. For all $A,B\in\mathcal B$ and $j\in\N_0$ we define $r_{A,B}^j$ as $r_A^j$ simply by replacing iterations by $f$ by iterations by $F_B$. To ease the notation we let $U_n:=U(u_n)$. We will assume throughout that $n$ is sufficiently large so that 
$U_n\subset B$.

We start with the following simple observation.
\begin{lemma}
\label{lem:hits-relation}
If $x\in \{r_B>j\}$ then for all $i\in\N$ we have $r_{U_n}^i(f^j(x))=r_{U_n}^i(x)-j$.
\end{lemma}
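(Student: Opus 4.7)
The plan is to unwind the definitions and use the hypothesis $U_n\subset B$ (which holds for $n$ sufficiently large). The key observation is that if $r_B(x)>j$, then $x,f(x),\ldots,f^{j-1}(x)\notin B$, and since $U_n\subset B$, these points are not in $U_n$ either. Hence the orbit of $x$ does not hit $U_n$ during times $0,1,\ldots,j-1$, so every hit $r_{U_n}^i(x)$ satisfies $r_{U_n}^i(x)\ge j$.

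Next, I would set up the bijection between the hits of the $x$-orbit to $U_n$ and the hits of the $f^j(x)$-orbit to $U_n$. Concretely, $f^k(f^j(x))=f^{k+j}(x)\in U_n$ if and only if the $x$-orbit hits $U_n$ at the time $k+j$. Since by the previous step no hit of $x$ to $U_n$ occurs at any time strictly less than $j$, the ordered sequence of hit times of $x$ is exactly $\{k_1+j,k_2+j,\ldots\}$ where $\{k_1<k_2<\ldots\}$ is the ordered sequence of hit times of $f^j(x)$ to $U_n$. Matching the $i$-th elements gives $r_{U_n}^i(x)=r_{U_n}^i(f^j(x))+j$, i.e.\ $r_{U_n}^i(f^j(x))=r_{U_n}^i(x)-j$ for every $i\in\N$.

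There is no real obstacle here; the lemma is a straightforward bookkeeping statement that simply records how the hitting-time counter to $U_n$ shifts when we start from an iterate $f^j(x)$ that has not yet visited the base set $B$. The only thing to be careful about is that the inclusion $U_n\subset B$ (guaranteed for large $n$ by hypothesis) is precisely what upgrades $r_B(x)>j$ to the stronger statement ``no hit to $U_n$ before time $j$,'' which is what makes the indexing of successive hits preserved under the shift by $j$.
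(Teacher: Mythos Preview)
Your proposal is correct and follows essentially the same idea as the paper's proof; the paper phrases it as an induction on $i$ (showing simultaneously that $F_{U_n}^i(f^j(x))=F_{U_n}^i(x)$), while you argue directly via the bijection between the ordered sequences of hit times, which is equivalent for this elementary statement. One small correction: $r_B(x)>j$ means $f(x),\ldots,f^j(x)\notin B$ (the hitting-time function starts at time $1$, and nothing forbids $x$ itself from lying in $B$), not $x,f(x),\ldots,f^{j-1}(x)\notin B$; with this fix your argument is unchanged, since you still get $r_{U_n}(x)>j$ from $U_n\subset B$.
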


\begin{proof}
We will use induction. Note that since $U_n\subset B$ then $r_B(x)>j$ implies that $r_{U_n}(x)>j$ and then it is clear that $r_{U_n}(f^j(x))=r_{U_n}(x)-j$. Moreover, $F_{U_n}(f^j(x))=F_{U_n}(x)$ since $F_{U_n}(f^j(x))=f^{r_{U_n}(f^j(x))}(f^j(x))=f^{r_{U_n}(x)-j}(f^j(x))=f^{r_{U_n}(x)}(x)=F_{U_n}(x)$.

Assume now by hypothesis that the statement of the lemma holds for $i$ and that $F_{U_n}^i(f^j(x))=F_{U_n}^i(x)$. Then $r_{U_n}^{i+1}(f^j(x))=r_{U_n}(F_{U_n}^i(f^j(x)))+r_{U_n}^i(f^j(x))=r_{U_n}(F_{U_n}^i(x))+r_{U_n}^i(x)-j=r_{U_n}^{i+1}(x)-j$. Moreover, $F_{U_n}^{i+1}(f^j(x))=f^{r_{U_n}(F_{U_n}^i(f^j(x)))}(F_{U_n}^i(f^j(x)))=f^{r_{U_n}(F_{U_n}^i(x))}(F_{U_n}^i(x))=F_{U_n}^{i+1}(x)$.
\end{proof}

The next two lemmata have as purpose to see that we can replace $\p$ by $\p_B$ to study the distribution of $A_n$. Let $J\in\RR$ so that $J=\cup_{l=1}^k I_j$, where $I_j=[a_j,b_j)\in\S$ are disjoint intervals. Let $\mathbf{x}=(x_1, x_2,\ldots, x_k)\in\R^k$ and define the event \begin{equation}
\label{eq:A-event}
\mathbb A(J,\mathbf x,n)=\{A_n(I_1)>x_1, A_n(I_2)>x_2,\ldots, A_n(I_k)>x_k\}.
\end{equation} 

We begin proving that  $\p(\mathbb A(J,\mathbf x,n))$ can be approximated by $\int_B r_B.\I_{\mathbb A(J,\mathbf x,n)} d\p$. But before we recall two useful formulas that are standard for induced maps:
{
\fontsize{10}{10}\selectfont
\begin{align}
\int_B r_B d\p_B&= \sum_{j=0}^\infty \p_B(r_B>j)
\label{eq:mean-formula}\\
\p(A)&=\sum_{j=0}^\infty \p(B\cap\{r_B>j\}\cap f^{-j}(A)) \label{eq:induced-measure}
\end{align}
}
\begin{lemma}
\label{lem:int-overB}
For any small $\eps_0,\eps_1>0$ and $n$ sufficiently large we have 
$$
\int_B r_B.\I_{\mathbb A(J^{\eps_1-},\mathbf x,n)}d\p-\eps_0\leq \p(\mathbb A(J,\mathbf x,n))\leq \int_B r_B.\I_{\mathbb A(J^{\eps_1+},\mathbf x,n)}d\p+\eps_0 
$$
\end{lemma}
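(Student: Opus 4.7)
The plan is to start from the decomposition formula \eqref{eq:induced-measure}, which gives
\[\p(A_n(J)>x)=\sum_{j=0}^{\infty}\int_B\I_{\{r_B>j\}}\bigl(\I_{\{A_n(J)>x\}}\circ f^j\bigr)\dif\p,\]
and exploit the fact that, since $X_i\circ f^j=X_{i+j}$, the composition only shifts the time window: $A_n(J)(f^j(x))=A_n(J+j/v_n)(x)$ for every $x$ and $j\in\N_0$. Hence the integrand on $B\cap\{r_B>j\}$ depends on the orbit of $x$ through a marked point process that sees the same intervals as $A_n$, only translated to the right by $j/v_n$.

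Next I would set $N=\lfloor\eps_1 v_n\rfloor$ and split the sum at $j=N$. For $0\le j<N$ the shift satisfies $0\le j/v_n<\eps_1$, so for every component interval of $J$ one has $J^{\eps_1-}\subset J+j/v_n\subset J^{\eps_1+}$. Combined with the fact that the mark function $m_u$ is monotone in its index set (property (2) of Remark~\ref{rem:general-marks}), this yields the pointwise sandwich
\[\I_{\{A_n(J^{\eps_1-})>x\}}(x)\;\le\;\I_{\{A_n(J+j/v_n)>x\}}(x)\;\le\;\I_{\{A_n(J^{\eps_1+})>x\}}(x).\]
Summing the inequalities against $\I_{\{r_B>j\}}$ for $j=0,\ldots,N-1$ and using $\sum_{j=0}^{N-1}\I_{\{r_B>j\}}=\min(r_B,N)$, I obtain the desired inequalities modulo the replacement of $\min(r_B,N)$ by $r_B$ and the tail $j\ge N$.

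For the two remaining error terms I would invoke Kac's Lemma: since $f$ is ergodic and $\p(B)>0$, one has $\int_B r_B\dif\p=1$, so $r_B\I_B\in L^1(\p)$ and therefore $\int_B r_B\I_{\{r_B>N\}}\dif\p\to 0$ as $N\to\infty$. The tail contribution $\sum_{j\ge N}\p(B\cap\{r_B>j\})\le\int_B r_B\I_{\{r_B>N\}}\dif\p$ is thus smaller than $\eps_0$ for all sufficiently large $n$, and the same bound controls $\int_B(r_B-\min(r_B,N))\I_{\{A_n(J^{\eps_1\pm})>x\}}\dif\p$, which is exactly the slack needed to replace $\min(r_B,N)$ by $r_B$ in the lower estimate. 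Gluing the pieces together delivers the claim.

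The only real subtlety is verifying the monotonicity of $J\mapsto A_n(J)$ used in the sandwich: this follows easily from assumptions (1)--(3) on $m_u$ in Remark~\ref{rem:general-marks} (shorter index sets produce marks no larger than those obtained from longer ones, and any extra cluster contributes a non-negative term), but one must check it across the three choices AOT/POT/REPP. Everything else is the same bookkeeping used to establish Kac-type identities.
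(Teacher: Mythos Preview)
Your proposal is correct and follows essentially the same route as the paper: start from the decomposition \eqref{eq:induced-measure}, truncate the sum, use that on $B\cap\{r_B>j\}$ the process seen from $f^j(x)$ is the process seen from $x$ shifted by $j$ time units (so by $j/v_n$ after rescaling), and control the tail by integrability of $r_B$ (Kac). The only cosmetic differences are that the paper fixes the truncation level $N^*$ first (from $\sum_{j>N^*}\p(B\cap\{r_B>j\})<\eps_0$) and then takes $n$ large so that $N^*/v_n<\eps_1$, whereas you let $N=\lfloor\eps_1 v_n\rfloor$ grow with $n$; and the paper expresses the shift through Lemma~\ref{lem:hits-relation} on the hitting times $r_{U_n}^i$ rather than through your direct identity $A_n(J)\circ f^j=A_n(J+j/v_n)$. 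Your explicit appeal to the monotonicity $J'\subset J\Rightarrow A_n(J')\le A_n(J)$ via properties (1)--(3) of Remark~\ref{rem:general-marks} is exactly the step the paper leaves implicit when it passes from ``$r_{U_n}^i\in v_nJ$'' to ``$\{A_n(J)>x\}$''.
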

\begin{proof}

By Lemma \ref{lem:hits-relation}, we have that
\[
r_{U_n} \circ f^j=r_{U_n}-j \;\;\; \text{in}\;\;\;  \{r_B >j\} \subset \{r_{U_n} >j\}.
\]

Let $\eps_0,\eps_1>0$. We start by choosing $N^*$ such that
\[
\sum_{j>N^*} \p(B\cap\{r_B>j\})<\eps_0,
\]
which is possible since $\int_B r_B d\p<\infty$.

Let $N_1$ be sufficiently large such that $N^*\p(U_n)=N^*v_n^{-1}<\eps_1$, $\forall n>N_1$. 

We start by proving the second inequality of Lemma \ref{lem:int-overB}.
We have that
{
\fontsize{10}{10}\selectfont
\begin{align}
\p(\mathbb A(J,\mathbf x,n))&< \sum_{j=0}^{N^*}\p(B\cap\{r_B>j\}\cap f^{-j}(\mathbb A(J,\mathbf x,n)))+\eps_0
\label{eq:des1}\\
&< \sum_{j=0}^{N^*}\p(B\cap\{r_B>j\}\cap \{\mathbb A(J^{\eps_1+},\mathbf x,n)\})+\eps_0
\label{eq:des2}
< \int_B r_B \I_{\mathbb A(J^{\eps_1+},\mathbf x,n)}d\p+\eps_0.
\end{align}
}

Inequality (\ref{eq:des1}) follows from (\ref{eq:induced-measure}).
The first inequality in (\ref{eq:des2}) holds because if $x\in B\cap \{r_B>j\}$, then $x\in B\cap \{r_{U_n}>j\}$ which implies that
$r^i_{U_n}\circ f^j(x)=r^i_{U_n}(x)-j$. Thus, if $r^i_{U_n}\circ f^j(x)\in v_n J$ then $r^i_{U_n}(x)\in v_n J^{\eps_1+}$, because $v_n \eps_1>N^*\geq j$ and so $r^i_{U_n}(x)=r^i_{U_n}\circ f^j(x)+j$ belongs to $v_n J^{\eps_1+}$.
The second inequality in (\ref{eq:des2}) follows from (\ref{eq:mean-formula}).
Thus, the second inequality of Lemma \ref{lem:int-overB} holds.

Now we turn to the first inequality.
We have that
{
\fontsize{10}{10}\selectfont
\begin{align}
\p(\mathbb A(J,\mathbf x,n))&> \sum_{j=0}^{N^*}\p(B\cap\{r_B>j\}\cap f^{-j}(\mathbb A(J,\mathbf x,n)))
\geq \sum_{j=0}^{N^*}\p(B\cap\{r_B>j\}\cap \{\mathbb A(J^{\eps_1-},\mathbf x,n)\})
\label{eq:des2-}\\
&\geq \sum_{j=0}^{\infty}\p(B\cap\{r_B>j\}\cap \{\mathbb A(J^{\eps_1-},\mathbf x,n)\})-\eps_0
\label{eq:des3-} =\int_B r_B \I_{\mathbb A(J^{\eps_1-},\mathbf x,n)}d\p-\eps_0 .
\end{align}
}

The second inequality in (\ref{eq:des2-}) holds because if $x\in \{r_B>j\}\subset\{r_{U_n}>j\}$, then, by Lemma \ref{lem:hits-relation},  
$r^i_{U_n}(f^j(x))=r^i_{U_n}(x)-j$. Thus, if $r^i_{U_n}(x)\in v_n J^{\eps_1-}$ then $r^i_{U_n}(f^j(x))\in v_n J$, because $v_n \eps_1>N^*\geq j$.
The inequality in (\ref{eq:des3-}) follows from (\ref{eq:induced-measure}).
\end{proof}

The next lemma shows that $\int_B r_B.\I_{\mathbb A(J,\mathbf x,n)} d\p$ can be approximated by $\p_B(\mathbb A(J,\mathbf x,n))$.
\begin{lemma}
\label{lem:muB}
For any small $\eps_0,\eps_1>0$ and $n$ sufficiently large we have 
$$
\p_B(\mathbb A(J^{\eps_1-},\mathbf x,n))-\eps_0\leq\int_B r_B.\I_{\mathbb A(J,\mathbf x,n)}d\p \leq \p_B(\mathbb A(J^{\eps_1+},\mathbf x,n))+\eps_0. 
$$
\end{lemma}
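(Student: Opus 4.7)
The proof strategy will mirror that of Lemma~\ref{lem:int-overB}, but with the roles of the left and right hand sides swapped, so the shift trick is applied to the integrand rather than to a probability.

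The first step is the Fubini-type identity
\[\int_B r_B\,\I_{A_n(J)>x}\,d\p=\sum_{j=0}^{\infty}\p\bigl(B\cap\{r_B>j\}\cap\{A_n(J)>x\}\bigr),\]
which is obtained by writing $r_B=\sum_{j\ge 0}\I_{\{r_B>j\}}$ and using Kac's formula $\int_B r_B\,d\p=1$ (Section~\ref{sec:setting}). As in the proof of Lemma~\ref{lem:int-overB} I would choose $N^*$ so that $\sum_{j>N^*}\p(B\cap\{r_B>j\})<\eps_0/2$ (possible because $\int_B r_B\,d\p<\infty$), take $N_1$ with $N^*/v_n<\eps_1$ for all $n>N_1$, and truncate the above series at $N^*$, losing at most $\eps_0/2$.

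For the upper bound, for $j\le N^*$ the shift relation of Lemma~\ref{lem:hits-relation} yields $r_{U_n}^{i}\circ f^{j}=r_{U_n}^{i}-j$ on $B\cap\{r_B>j\}$, so the hitting times of $f^{j}(y)$ into $U_n$ lying in $v_nJ$ correspond to those of $y$ lying in $v_nJ+j\subset v_nJ^{\eps_1+}$. Combining this with the three monotonicity properties of the mark function $m_u$ listed in Remark~\ref{rem:general-marks}, one obtains
\[\{A_n(J)>x\}\cap B\cap\{r_B>j\}\subset f^{-j}\{A_n(J^{\eps_1+})>x\}\cap B\cap\{r_B>j\},\]
and the symmetric inclusion with $J^{\eps_1-}$ for the lower bound. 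Summing the resulting probabilities and invoking the induced-measure identity~\eqref{eq:induced-measure}, $\sum_{j\ge 0}\p(B\cap\{r_B>j\}\cap f^{-j}E)=\p(E)$, I would extend the truncated sum back to an infinite one (paying $\eps_0/2$) and conclude
\[\p\bigl(A_n(J^{\eps_1-})>x\bigr)-\eps_0\le\int_B r_B\,\I_{A_n(J)>x}\,d\p\le\p\bigl(A_n(J^{\eps_1+})>x\bigr)+\eps_0.\]

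The final step, which is the main obstacle, is to replace $\p$ by $\p_B$ on the two extreme sides. Note the identity $\p(E)-\p_B(E)=-\mathrm{Cov}(\I_E,\I_B)/\p(B)$. When $E=\{A_n(J^{\eps_1\pm})>x\}$ the underlying time window $v_nJ^{\eps_1\pm}$ grows like $v_n\to\infty$ while $B$ is fixed and contains the (shrinking) set $U_n$; hence the rare event $E$ decouples asymptotically from the base $B$. This asymptotic independence, combined with a further refinement of $N^*$ and of $n$, absorbs the remaining discrepancy into $\eps_0$. In the setting of Theorem~\ref{thm:mpp_ret_orig} this step is delicate because no decay-of-correlations hypothesis is imposed on $f$ itself; it will need to be made rigorous by exploiting either ergodicity of $\p$ through a tower/Kac argument (splitting $\I_E$ according to which level of the Kakutani skyscraper a point lies on) or by a direct estimate that uses the smallness of $U_n\subset B$ together with the $F_B$-invariance of $\p_B$ to transfer information from the induced system back to the original one.
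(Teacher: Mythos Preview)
Your first four steps simply reproduce Lemma~\ref{lem:int-overB} with the roles of $J$ and $J^{\eps_1\pm}$ exchanged: you end up with
\[
\p\bigl(A_n(J^{\eps_1-})>x\bigr)-\eps_0\le\int_B r_B\,\I_{A_n(J)>x}\,d\p\le\p\bigl(A_n(J^{\eps_1+})>x\bigr)+\eps_0,
\]
which is nothing more than the previous lemma read backwards. The entire new content of the present lemma is the passage from $\p$ to $\p_B$, which you correctly identify as the ``main obstacle'' but do not resolve. Your suggested fixes (asymptotic decoupling of $\I_E$ and $\I_B$, a vague tower/Kac splitting) remain at the level of heuristics; no mixing assumption on $f$ is available, and ergodicity of $\p$ alone does not give you $\mathrm{Cov}(\I_E,\I_B)\to 0$ for the moving events $E=\{A_n(J^{\eps_1\pm})>x\}$.

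The paper's proof takes a genuinely different route that avoids comparing $\p$ and $\p_B$ directly. By $F_B$-invariance of $\p|_B$ one writes
\[
\int_B r_B\,\I_{A_n(J)>x}\,d\p=\frac{1}{M}\sum_{j=0}^{M-1}\int_B (r_B\circ F_B^j)\,(\I_{A_n(J)>x}\circ F_B^j)\,d\p .
\]
On a good set $G_n=B\cap\{r_B^M\le M^*\}\cap\{r_{U_n,B}>M\}$ of nearly full measure one has $r_{U_n}^i\circ F_B^j=r_{U_n}^i-r_B^j$ with $r_B^j\le M^*<v_n\eps_1$, so each $\I_{A_n(J)>x}\circ F_B^j$ can be sandwiched between $\I_{A_n(J^{\eps_1\mp})>x}$ (note the shift is by $F_B$, not $f$, and the induced hitting times control the displacement). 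Uniform integrability of $\{r_B\circ F_B^j\}_j$ handles the complement $B\setminus G_n$. Finally, Birkhoff's Ergodic Theorem applied to $F_B$ replaces the weight $\frac{1}{M}\sum_{j=0}^{M-1} r_B\circ F_B^j$ by the constant $\p(B)^{-1}$ up to an $L^1$-error $<\eps_0/2$, yielding $\p(B)^{-1}\int_B\I_{A_n(J^{\eps_1\pm})>x}\,d\p=\p_B(A_n(J^{\eps_1\pm})>x)$. This ergodic-averaging idea is the missing ingredient in your proposal.
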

\begin{proof}

We start by noting that since $F_B$ is $\p$-invariant in $B$,
$$
\int_B r_B.\I_{\mathbb A(J,\mathbf x,n)}d\p = \frac{1}{M} \sum_{j=0}^{M-1} \int_B r_B\circ F_B^j\I_{\mathbb A(J,\mathbf x,n)}\circ F_B^jd\p.
$$

Let $\eps_0,\eps_1>0$. We will see that for $n$ sufficiently large
$$
\int_B r_B\circ F_B^j\I_{\mathbb A(J^{\eps_1-},\mathbf x,n)}d\p-\eps_0/2\leq\int_B r_B\circ F_B^j\I_{\mathbb A(J,\mathbf x,n)}\circ F_B^jd\p\leq \int_B r_B\circ F_B^j\I_{\mathbb A(J^{\eps_1+},\mathbf x,n)}d\p+\eps_0/2.
$$

As in Lemma \ref{lem:hits-relation}, we have that 
$$r_{U_n}^i\circ F_B^j=r_{U_n}^i-r_{B}^j \;\;\; \text{in} \;\;\; B\cap\{r_{U_n}>r_B^j\}=B\cap\{r_{U_n,B}>j\}.$$ 

Now, let $\eps_2$ be such that, if $\p(D)<\eps_2$ for some $D\subset B$ then 

\begin{equation}
\label{eq:eps2}
\int_D r_B\circ F_B^jd\p<\eps_0/2\;\; \;\;\forall j\in\N_0.
\end{equation}

Let $M^*$ be sufficiently large such that $\p(\{r_B^M>M^*\}\cap B)<\eps_2/2$.
Let $N_2$ be such that $\forall n>N_2$, $\p(B\cap\{r_{U_n,B}\leq M\})<\eps_2/2$.
We also assume that $\exists N_3\in\N$ $\forall n>N_3$ $M^*\p(U_n)=M^*v_n^{-1}<\eps_1$.
Let $G_n=B\cap \{r_B^M\leq M^*\}\cap \{r_{U_n,B}> M\}$. 

By construction, we have that $\p(B\setminus G_n)\leq  \p(B\cap\{r_{U_n,B}\leq M\})+\p(B\cap\{r_B^M>M^*\})<\eps_2/2+\eps_2/2=\eps_2$ for $n>N_2$.

Since $r_B$ is integrable in $B$ and $\p|_B$ is $F_B$-invariant, then the sequence of functions $\{r_B\circ F_B^j\}_{j\in \N}$ is uniformly integrable in $B$, \ie  $\int_B r_B\circ F_B^j d\p= \int_B r_B\circ F_B^{j'} d\p$, $\forall j,j'\in\N.$

Observe now that in $G_n$ and for $n>\max \{N_2,N_3\}$ we have that
\[
r_{U_n}^i\circ F_B^j=r_{U_n}^i- r_{B}^i
\]
because, if $x\in G_n$ then $x\in B\cap\{r_{U_n,B}> M\}$ which implies that $x\in B\cap\{r_{U_n,B}> j\}$. If $x\in G_n$ then $r_B^j(x)\leq r_B^M(x)\leq M^*$ and since $n>N_3$, then $r_B^j(x)v_n^{-1}<\eps_1$, and so
\begin{align*}
r_{U_n}^i\circ F_B^j(x)\in v_n J\Rightarrow r_{U_n}^i(x)\in v_n J^{\eps_1+}\quad\text{and}\quad
 r_{U_n}^i(x)\in v_n J^{\eps_1-}\Rightarrow r_{U_n}^i\circ F_B^j(x)\in v_n J  
\end{align*}

In this way, we obtain that, for $n>N_3$,
\begin{align}
\label{eq:cont1}
\mathbb A(J^{\eps_1-},\mathbf x,n)\cap G_n\subset F_B^{-j}(\mathbb A(J,\mathbf x,n))\cap G_n
\end{align}
\begin{align}
\label{eq:cont2}
F_B^{-j}(\mathbb A(J,\mathbf x,n))\cap G_n \subset \mathbb A(J^{\eps_1+},\mathbf x,n)\cap G_n.
\end{align}

We then may write
\[
\int_B r_B\circ F_B^j\I_{\mathbb A(J,\mathbf x,n)}\circ F_B^jd\p=\int_{G_n} r_B\circ F_B^j\I_{\mathbb A(J,\mathbf x,n)}\circ F_B^jd\p+\int_{B\setminus G_n} r_B\circ F_B^j\I_{\mathbb A(J,\mathbf x,n)}\circ F_B^jd\p.
\]
 By the choice of $N_2$ and $M^*$,
\[
0\leq \int_{B\setminus G_n} r_B\circ F_B^j\I_{\mathbb A(J,\mathbf x,n)}\circ F_B^jd\p\leq \int_{B\setminus G_n} r_B\circ F_B^j   d\p<\eps_0/2.
\]
The last inequality follows from (\ref{eq:eps2}) since $\p(B\setminus G_n)<\eps_2$. Thus,
\begin{align*}
\int_{B} r_B\circ F_B^j\I_{\mathbb A(J,\mathbf x,n)}\circ F_B^jd\p&\leq \int_{G_n} r_B\circ F_B^j\I_{\mathbb A(J,\mathbf x,n)}\circ F_B^jd\p+\frac{\eps_0}2
\leq \int_{B} r_B\circ F_B^j\I_{\mathbb A(J^{\eps_1+},\mathbf x,n)}d\p+\frac{\eps_0}2.
\end{align*}

The first inequality follows from (\ref{eq:cont1}).

On the other hand, we have that
\begin{align*}
\int_{B} &r_B\circ F_B^j\I_{\mathbb A(J,\mathbf x,n)}\circ F_B^jd\p\geq \int_{G_n} r_B\circ F_B^j\I_{\mathbb A(J,\mathbf x,n)}\circ F_B^jd\p\\
&\geq \int_{B} r_B\circ F_B^j\I_{\mathbb A(J^{\eps_1-},\mathbf x,n)}d\p-\int_{B\setminus G_n}r_B\circ F_B^j\I_{\mathbb A(J^{\eps_1-},\mathbf x,n)}d\p
\geq \int_{B} r_B\circ F_B^j\I_{\mathbb A(J^{\eps_1-},\mathbf x,n)}d\p-\eps_0/2.
\end{align*}

The second inequality above follows from (\ref{eq:cont2}) and the last inequality follows from (\ref{eq:eps2}).
Hence,
\[
\int_{B} r_B\circ F_B^j\I_{\mathbb A(J^{\eps_1-},\mathbf x,n)}d\p-\eps_0/2\leq \int_{B} r_B\circ F_B^j\I_{\mathbb A(J,\mathbf x,n)}\circ F_B^jd\p\leq \int_{B} r_B\circ F_B^j\I_{\mathbb A(J^{\eps_1+},\mathbf x,n)}d\p+\eps_0/2.
\]

By ergodicity of $F_B$, by the Ergodic Theorem and Kac's Theorem we obtain that, if $M^*$ is sufficiently large, then 
\[
\int_B\left|\frac{1}{M}\sum_{j=0}^{M-1}r_B\circ F_B^j-\p(B)^{-1}\right|d\p<\eps_0/2.
\]
Consequently,
\[
\int_{B} \p(B)^{-1}\I_{\mathbb A(J^{\eps_1-},\mathbf x,n)}d\p-\eps_0 \leq \frac{1}{M} \sum_{j=0}^{M-1} \int_B r_B\circ F_B^j\I_{\mathbb A(J,\mathbf x,n)}\circ F_B^jd\p \leq \int_{B} \p(B)^{-1}\I_{\mathbb A(J^{\eps_1+},\mathbf x,n)}d\p+\eps_0,
\]
and the result follows.
\end{proof}

Finally, the last lemma allows to approximate $\p_B(\mathbb A(J,\mathbf x,n))$ by $\p_B(\mathbb A^B(J,\mathbf x,n))$, where $\mathbb A^B(J,\mathbf x,n)$ is defined as $\mathbb A(J,\mathbf x,n)$ by replacing the role of $A_n$ by that of $A^B_n$.
\begin{lemma}
\label{lem:muB}
For any small $\eps_0,\eps_1, \eps'_1>0$ and $n$ sufficiently large we have 
$$
\p_B(\mathbb A^B(J^{\eps_1-},\mathbf x,n))-\eps_0\leq\p_B(\mathbb A(J,\mathbf x,n)) \leq \p_B(\mathbb A^B(J^{\eps'_1+},\mathbf x,n))+\eps_0 
$$
\end{lemma}
\begin{proof}

We recall that
$$A_n(I_l)(x)=\sum_{j=0}^{N(I_l)(x,u)} m_u(\{X_i\}_{i\in v_n(I_l)_j(x,u)\cap\N_0})$$
and that $$A_n^B(I_l)(x)=\sum_{j=0}^{N(I_l)(x,u)} m_u(\{X_i^B\}_{i\in v_n^B(I_l)_j(x,u)\cap\N_0})$$ where $X_j^B=\varphi\circ F_B^j$ and $v_n^B=\frac{1}{\p_B(U_n)}=\frac{\p(B)}{\p(U_n)}$. 
Note that $r_B^j(x)=\sum_{i=0}^{j-1}r_B\circ F_B^i(x)$.

By the Ergodic Theorem and Kac's Theorem, we have that $|\frac{1}{j}\sum_{i=0}^{j-1}r_B\circ F_B^i(x)-\frac{1}{\p(B)}|\rightarrow 0$ $\p_B$-a.e. because $F_B$ is ergodic with respect to $\p_B$ and $\int_B r_B d\p_B=\frac{1}{\p(B)}$.

Observe that
\[
\left|\frac{1}{j}\sum_{i=0}^{j-1}r_B\circ F_B^i(x)-\frac{1}{\p(B)}\right|<\delta\Leftrightarrow \left(\frac{1}{\p(B)}-\delta\right)j<r_B^j(x)< \left(\frac{1}{\p(B)}+\delta\right)j.
\]

Define now the following subsets of $B$:
\[
E_M^{\eps_3}:=\left\{x\in B: \left(\frac{1}{\p(B)}-\eps_3\right)j\leq \sum_{i=0}^{j-1}r_B\circ F_B^i(x)\leq \left(\frac{1}{\p(B)}+\eps_3\right)j, \;\;\;\forall j\geq M\right\}.
\]

Note that $\p_B(B\setminus E_M^{\eps_3})\xrightarrow[M\to\infty]{}0$.
Let $F_M=\left\{r_{U_n,B}\geq M\right\}$. We have that $B\setminus F_M=B\cap \left(F_B^{-1}U_n\cup\ldots\cup F_B^{-(M-1)}U_n \right)$ and so $\p_B(B\setminus F_M)\leq M \p_B(U_n)\xrightarrow[n\to\infty]{}0$.

Let $M$ be sufficiently large such that  $\p_B(B\setminus E_M^{\eps_3})<\eps_0/2$ and $N_4$ sufficiently large such that $\forall n>N_4$, $\p_B(B\setminus F_M)<\eps_0/2$.

We have that $F_M\subset \left\{r^i_{U_n,B}\geq M\right\}, \forall i\in \N$, since $r^i_{U_n,B}\geq r_{U_n,B}$. Moreover, if $x\in E_M^{\eps_3}\cap F_M$, then 
\[
\left(\frac{1}{\p(B)}-\eps_3\right) r^i_{U_n,B}(x)\leq r^i_{U_n}(x)= \sum_{i=0}^{r^i_{U_n,B}(x)-1}r_B\circ F_B^i(x)=r_B^{r^i_{U_n,B}(x)}(x)\leq \left(\frac{1}{\p(B)}+\eps_3\right) r^i_{U_n,B}(x).
\]

So, we may write
\begin{equation}
\label{eq:r^i} r^i_{U_n}(x)=(1+\alpha)\p(B)^{-1}r^i_{U_n,B}(x),
\end{equation}
where $|\alpha|<\eps_3\p(B)$.

Consequently,
\begin{align*}
r^i_{U_n}(x)\in v_n J&\Leftrightarrow v_n^{-1}r^i_{U_n}(x)\in J
\Leftrightarrow (1+\alpha)(v_n^B)^{-1} r^i_{U_n,B}(x)\in J \\
&\Leftrightarrow (v_n^B)^{-1} r^i_{U_n,B}(x)\in (1+\alpha)^{-1}J 
\Rightarrow (v_n^B)^{-1} r^i_{U_n,B}(x)\in J^{\frac{\eps_3}{1-\eps_3}J_{\sup}+},
\end{align*}
where $J_{\sup}=\sup J.$

On the other hand, using again (\ref{eq:r^i}),
\begin{align*}
r^i_{U_n,B}(x)\in  v_n^B J^{\eps_3 J_{\sup}-}
&\Leftrightarrow\p(B)^{-1} r^i_{U_n,B}(x)v_n^{-1}\in J^{\eps_3 J_{\sup}-}
\Leftrightarrow  r^i_{U_n}(x)v_n^{-1} (1+\alpha)^{-1}\in J^{\eps_3 J_{\sup}-}\\
&\Leftrightarrow  r^i_{U_n}(x)\in v_n (1+\alpha)J^{\eps_3 J_{\sup}-}
\Rightarrow r^i_{U_n}(x)\in v_n J.
\end{align*}

We then have
\begin{align}
\label{eq:eps3q}
\mathbb A(J,\mathbf x,n)\cap F_M\cap E_M^{\eps_3} \subset \mathbb A^B(J^{\frac{\eps_3}{1-\eps_3}J_{\sup}+},\mathbf x,n)\cap F_M\cap E_M^{\eps_3}
\end{align}
and 
\begin{align}
\label{eq:eps3}
\mathbb A^B(J^{\eps_3 J_{\sup}-},\mathbf x,n)\cap F_M\cap E_M^{\eps_3}  \subset \mathbb A(J,\mathbf x,n)\cap F_M\cap E_M^{\eps_3}.
\end{align}

By (\ref{eq:eps3q}), we obtain
\begin{align*}
\p_B(\mathbb A(J,\mathbf x,n))&\leq \p_B(\mathbb A(J,\mathbf x,n)\cap F_M\cap E_M^{\eps_3})+\p_B(B\setminus F_M)+\p_B(B\setminus E_M^{\eps_3})\\
&\leq \p_B(\mathbb A(J,\mathbf x,n)\cap F_M\cap E_M^{\eps_3})+\eps_0
\leq \p_B(  \mathbb A^B(J^{\eps'_1+},\mathbf x,n) )+\eps_0,
\end{align*}
where $\eps'_1=\frac{\eps_3}{1-\eps_3}J_{\sup}.$
By (\ref{eq:eps3}), we obtain
\begin{align*}
\p_B&(\mathbb A(J,\mathbf x,n))\geq \p_B(\{\mathbb A(J,\mathbf x,n)\}\cap F_M\cap E_M^{\eps_3})\\
&\geq \p_B( \mathbb A^B(J^{\eps_3J_{\sup}-},\mathbf x,n) )-\p_B(B\setminus F_M)-\p_B(B\setminus E_M^{\eps_3})
\geq \p_B( \mathbb A^B(J^{\eps_1-},\mathbf x,n))-\eps_0,
\end{align*}
where $\eps_1=\eps_3J_{\sup}$, concluding in this way the proof.
\end{proof}

\bibliographystyle{abbrv}
\bibliography{AOT-POT}

{\fontsize{10}{10}\selectfont}

\end{document}